\numberwithin{equation}{section}
\newtheorem{theorem}{Theorem}[section] 
\newtheorem{lemma}[theorem]{Lemma}
\newtheorem{corollary}[theorem]{Corollary}
\newtheorem{proposition}[theorem]{Proposition}
\theoremstyle{definition}
\newtheorem{definition}[theorem]{Definition}
\newtheorem{remark}[theorem]{Remark}
\newtheorem{example}[theorem]{Example}
\numberwithin{equation}{section}
\def\F{{\mathbb F}}
\def\A{\mathcal A}
\def\L{\mathcal L}
\def\SS{\mathcal S}
\def\dim{{\rm dim}\,}
\title{Values of the length function \\ for  nonassociative algebras \footnote{The work was financially supported by the grant RSF 21-11-00283.}}
\author{A. Guterman, D. Kudryavtsev}
\date{}
\newcommand{\Addresses}{
	\bigskip
	\footnotesize
	
	\textsc{Alexander Guterman, }{Faculty of Algebra, Department of
		Mechanics and Mathematics, Lomonosov Moscow State University, Moscow
		119991, Russia; Moscow Center for Fundamental and Applied Mathematics,  Moscow,   119991, Russia; Moscow Center for Continuous Mathematical Education,  Moscow, 119002, Russia
	}\par\nopagebreak
	\textit{E-mail address}:   \texttt{guterman@list.ru}
	
	\medskip

	\textsc{Dmitry Kudryavtsev, }{School of Mathematics, University of Manchester, Manchester M13 9PL, UK; Moscow Center for Fundamental and Applied Mathematics,  Moscow, 119002, Russia}\par\nopagebreak
	\textit{E-mail address}:   \texttt{dmitry.kudryavtsev@postgrad.manchester.ac.uk}
}
\begin{document}
\maketitle

\Addresses

\begin{abstract}
We study realizable values of the length function for unital possibly nonassociative algebras of a given dimension. To do this we apply  the method of characteristic sequences and establish  sufficient conditions of realisability for a given value of length. The proposed conditions are based on binary decompositions of the value and algebraic constructions that allow to modify length function of an algebra.  Additionally we provide a classification of unital algebras of maximal possible length in terms of their basis.

	MSC: 15A03, 17A99, 15A78
\end{abstract}

{\em Keywords}: length function, nonassociative algebra

\section{Introduction}

Let $\F$ be an arbitrary field. In this paper $\A$ denotes a finite dimensional unital not necessarily associative $\F$-algebra with the multiplication $(\cdot)$ usually denoted by the concatenation. Let $\SS=\{a_1,\ldots,a_k\}$ be a finite generating set of $\A$. Any product of a finite number of elements from $\SS$ is a {\em word} in $\SS$. The {\em length} of the word $w$, denoted $l(w)$, equals to the  number of letters in the corresponding product.
We consider $1$ as a word in $\SS$ with the {\em length $0$}.

The set of all words  in $\SS$ of lengths less than or equal to $i$ is denoted by $\SS^i$, here $i\ge 0$.

Note that similarly to the associative case, $m<n$ implies that $\SS^m \subseteq \SS^n$.

The set $\L_i(\SS) = \langle \SS^i \rangle$  is the linear span  of  the set  $\SS^i$
(the set of all finite linear combinations with coefficients belonging to
$\mathbb{F}$). We write $\L_i$ instead of $\L_i(\SS)$ if $\SS$ is clear from the context. It should be noted that for unital algebras $\L_0(\SS)=\langle
1 \rangle=\mathbb{F}$ for any $\SS$, and for non-unital algebras $\L_0 = \emptyset$.
We denote  $\L(\SS) =\bigcup\limits_{i=0}^\infty \L_i(\SS)$. 

Since the set $\SS$ is generating for $\A$, the equality $\A=\L(\SS)$ holds.

\begin{definition}\label{sys_len} 
	The {\em length of  a generating set} $\SS$ of a finite-dimensional algebra $\A$ is defined as follows: $l(\SS)=\min\{k\in \mathbb{Z}_+:\L_k(\SS)=\A\}.$ 
\end{definition}

\begin{definition}\label{alg_len} The
	{\em length of an algebra $\A$} is   $l(\A)=\max \{l(\SS): \L(\SS)=\A\}$. 
\end{definition}

The problem of the associative algebra length computation was first discussed in \cite{SpeR59, SpeR60} for the algebra of $3\times3$ matrices in the context of the mechanics of isotropic continua.

It is straightforward to see that the length of a unital associative algebra is strictly less than its dimension, and this bound is sharp. Namely, one-generated unital associative algebra of the dimension $d$ has length $d-1$. The first non-trivial result in this direction is going back to Paz \cite{Paz84}. More results on the length of abstract associative algebras can be found, for example, in~\cite{LafMSh,Mar09,Pap97} and their bibliography. 

In general most of the known results on the length function are just bounds that are not sharp. 
Even the sharp upper bound for the length of the matrix algebra is not known through  these bounds are important in the number of applications, see~\cite{Mar09} and references therein. In particular, the knowledge of the universal upper bound for the length provides the maximal size of products in generators that we need to consider to verify a certain property, for example, unitary similarity, \cite{AlIk}. Moreover, a great deal of work has been done investigating the related notion of length for given generating sets of matrices as in Definition \ref{sys_len}, see~\cite{GutLMSh,Long1,Long2} and references therein. 

The results on the lengths of nonassociative algebras were obtained in the works   \cite{GutK19,GutK21,GutK18}. The common tool to work without associativity  was the method of characteristic sequences which are defined as follows.

\begin{definition} \label{CharSeqUn} \cite[Definition 3.1]{GutK18}
	Consider a unital $\F$-algebra $\A$  of  dimension $\dim \A = n$,
	and its generating set $\SS$. By the {\em characteristic sequence} of $\SS$
	in $\A$ we understand a monotonically non-decreasing sequence of  non-negative integers $(m_0,m_1,\ldots, m_N)$, constructed by the following rules:
	\begin{enumerate}
		\item $m_0 = 0$.
		\item Denoting $s_1=\dim \L_1(\SS)-1$, we define $m_1=\ldots =m_{s_1}=1$.
		\item Let for some $r>0$, $k>1$ the elements $m_1,\ldots,m_r$ be already defined and the sets $ \L_1(\SS) ,\ldots , \L_{k-1}(\SS) $ are considered. Then we inductively continue the process in the following way. Denote $s_k=\dim \L_k(\SS) - \dim \L_{k-1}(\SS)$ and  define $m_{r+1} =\ldots =m_{r+s_k}=k$.  
	\end{enumerate}
\end{definition}

Such sequences  are directly connected to the dimension of algebra and the length of the respective generating set. Combinatorial properties of these sequences, in turn, allow us to establish various facts about lengths. In particular, using characteristic sequences it is possible to  provide a strict upper bound on length of a general nonassociative unital algebra.

\begin{theorem}[{\cite[Theorem 2.7]{GutK18}}]
	Let $\A$ be a unital $\F$-algebra,  $\dim \A = n \ge 2$. Then $l( \A)\le 2^{n-2}$.
\end{theorem}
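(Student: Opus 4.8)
The plan is to control how fast the dimensions $d_k := \dim \L_k(\SS)$ can grow with $k$, and then to read off the bound on $l(\SS) = \min\{k : \L_k(\SS) = \A\}$ for an arbitrary generating set $\SS$; taking the maximum over all $\SS$ then yields the bound on $l(\A)$. Equivalently, in the language of Definition \ref{CharSeqUn}, I would show that the characteristic sequence satisfies $m_{i+1}\le 2m_i$, so that $m_i\le 2^{i-1}$ and hence $l(\SS)=m_{n-1}\le 2^{n-2}$.

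The crucial ingredient is a doubling lemma: for every $k\ge 1$, if $\L_k(\SS)=\L_{2k}(\SS)$ then $\L_k(\SS)=\A$. To prove it, I would observe that any product of two words of length $\le k$ is a word of length $\le 2k$, so by bilinearity $\L_k \cdot \L_k \subseteq \L_{2k}$. Under the hypothesis $\L_{2k}=\L_k$ this gives $\L_k \cdot \L_k \subseteq \L_k$, that is, $\L_k$ is a subspace closed under multiplication. Since $k\ge 1$ we have $1\in \L_0 \subseteq \L_k$ and $\SS\subseteq \L_1 \subseteq \L_k$, so $\L_k$ is a unital subalgebra containing the generating set $\SS$; as $\A$ is generated by $\SS$, this forces $\L_k=\A$. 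Contrapositively, $\L_k\ne\A$ implies the strict inclusion $\L_k\subsetneq\L_{2k}$, hence $d_{2k}>d_k$.

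With this in hand I would run a doubling induction along the checkpoints $1,2,4,\ldots,2^j,\ldots$. Since $\dim\A=n\ge 2$, not all elements of $\SS$ can lie in $\F\cdot 1$ (otherwise the generated subalgebra would be $\F\cdot 1$), so $d_1\ge 2$. Assume for contradiction that $\L_{2^{n-2}}\ne\A$; then $\L_{2^j}\ne\A$ for every $j\le n-2$ because the spaces are nested, and the doubling lemma yields $d_{2^{j+1}}>d_{2^j}$ at each step. A short induction gives $d_{2^j}\ge j+2$ for $0\le j\le n-2$, so at $j=n-2$ we obtain $d_{2^{n-2}}\ge n$, contradicting $\L_{2^{n-2}}\ne\A$. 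Hence $\L_{2^{n-2}}=\A$ and $l(\SS)\le 2^{n-2}$ for every $\SS$, whence $l(\A)\le 2^{n-2}$.

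I expect the main obstacle to be the doubling lemma, and specifically the step upgrading \emph{$\L_k$ is multiplicatively closed} to \emph{$\L_k=\A$}: this is exactly where unitality and the generation hypothesis enter. It is worth stressing that the inclusion $\L_k\cdot\L_k\subseteq\L_{2k}$ does not use associativity, only the additivity of word length under multiplication, which is what makes the argument valid in the nonassociative setting. The remainder is bookkeeping, the sole point requiring care being the base count $d_1\ge 2$ that starts the induction and explains the necessity of the assumption $n\ge 2$.
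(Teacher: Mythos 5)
Your proof is correct. One caveat before comparing: the paper under review never proves this theorem itself — it is imported from \cite{GutK18} (and restated as Proposition \ref{prop_max}); inside this paper's framework it is an immediate consequence of two other quoted results, Lemma \ref{N=n-1} ($l(\SS)=m_{n-1}$) and Proposition \ref{prop_bound} ($m_h\le 2^{h-1}$), which in turn rest on the characteristic-sequence machinery, in particular on factoring irreducible words (Lemma \ref{lem_1}) and the additive structure of the sequence (Proposition \ref{prop_add}). Your route is genuinely different in mechanism and more self-contained: the doubling lemma — if $\L_k=\L_{2k}$ then $\L_k$ is a unital subalgebra containing $\SS$, hence equals $\A$ — works directly with the filtration $\L_1\subseteq\L_2\subseteq\cdots$ and never mentions irreducible words or characteristic sequences; the remaining dimension count $d_{2^j}\ge j+2$, anchored by the observation that $\SS\not\subseteq\F\cdot 1$ when $n\ge 2$, is exactly right, and the final contradiction ($d_{2^{n-2}}\ge n$ forces $\L_{2^{n-2}}=\A$) closes the argument cleanly, associativity never being used. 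What the paper's heavier formulation buys is finer, termwise information: not only the endpoint bound but $m_h\le 2^{h-1}$ for every $h$ together with the decomposition $m_h=m_{t_1}+m_{t_2}$, which the rest of the paper depends on (Proposition \ref{pr_2}, Theorem \ref{th_bintwo1}, the classification of maximal-length algebras). Your argument recovers the termwise bound as well (since $d_{2^j}\ge j+2$ is equivalent to $m_{j+1}\le 2^{j}$), but not the additive structure, so it suffices for this theorem while the paper's apparatus is needed for its later results.
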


The above bound is strict, see \cite[Example 2.8]{GutK18}.

To compare associative and nonassociative cases let us observe that  the maximal possible length of an associative algebra of dimension $n$ is considerably smaller, namely it is equal to $(n-1)$. Moreover, in the present paper we show that for all $n$ and any positive integer $l\le n-1$ there are associative algebras of dimension $n$ and length $l$, see Example~\ref{Ex_ass}.

Therefore the following question appears naturally. 

What values of length between $1$ and $2^{n-2}$ can be obtained as lengths of general nonassociative algebras of dimension $n$? 


As it was shown in \cite{GutK19}, the nonassociative situation is quite different and a gap in the  set of values of length was discovered. In particular, already for the value $l=2^{n-2}-1$ there are no algebras of length~$l$.  

In the present paper we investigate the set of realizable values for the length function on nonassociative algebras.
Our main purpose   is to provide an    answer to the above question. As another application of the proposed method we characterize all algebras of the maximal length in terms of their basis.

Instead of the previously known combinatorial criteria based on characteristic sequences, which was nontrivial to check, see  \cite[Theorem 3.19]{GutK19},  we provide an efficient numerical  sufficient condition for feasibility of length values. Our condition is stated in terms of the binary decomposition of an integer under consideration and is easy to check. 

In particular, we show that for algebras of dimension 2, 3, 4 all values of length are realizable, and for any dimension $n\ge 5$ there are gaps of non-realizable values.  If   the value is bigger than a half of the maximal length value,  then  all the gaps are   completely determined. Namely, we prove that   only realizable values for $k>2^{n-3}$  are of the form $2^{n-3}+2^q$, and moreover, all integers of the type $2^h$ and $2^{p}+2^q$ with $h\le n-2$, $q\le p \le n-3$, are  realizable. If $k>\lceil\frac{n}{2} \rceil$ then all values in the interval $[2^{n-k-1},2^{n-k}-1]$ are realizable.   As a corollary a lower bound for the first non-realizable value is obtained.  Namely, it is proved that all values between 1 and $ 2 ^{\lceil \frac{n}{2} \rceil} $ are realizable for $n\ge 6$. We also compute the total number of realizable values in each interval of the form  $[2^{n-k-1},2^{n-k}-1]$. 

Sufficient condition is provided for a given integer to be realizable as a value of length. Namely, if $l<2^{n-k}$ for some  $k$ such that the binary decomposition of $l$ contains less than or equal to $k$ entries 1, then $l$ is realizable. An example is given that this condition is not necessary. A lower bound for the number of realizable values is obtained and several new gaps in the set of values of the length function are found. 

Additionally,  we expand upon connection of characteristic sequence to respective generating set. This is achieved by constructing a special basis $W = \{e_0,\ldots,e_{n-1}\}$ of words corresponding to the characteristic sequence $M = \{m_0,\ldots,m_{n-1} \}$ such that the multiplication low of the elements of $W$ is defined by the addition in~$M$.
As an application of this technique we characterize nonassociative algebras of maximal length in terms of their basis.

Our paper is organized as follows. In Section 2 we provide previously established results and notions, relevant for further proofs. In Section 3 we introduce basic numerical results for possible length values examining  proto-characteristic sequences. Section 4 covers the application of binary decomposition of a given value in checking its feasibility as a length of an algebra. Throughout Sections 5 and 6 we   classify  algebras of maximal length in terms of their basis.

\section{Basic results and notions}

To formulate our results let us start by introducing basic notions and tools in the nonassociative situation.

\begin{definition}\label{PrCharSeq}\cite[Definition 3.1]{GutK19}
	We call the sequence $M=(m_0, \ldots, m_{n-1})$ {\em proto-characteristic} if it satisfies the following
	properties:
	\begin{enumerate}
		\item $m_0=0$.
		\item  There exists $k_1,\ 1\le  k_1 \le n-1,$ such that $m_1=\ldots=m_{k_1}=1$ and if $k_1 < n-1$ it holds that $m_{k_1 +1}>1$.
		\item The sequence $(m_0, \ldots, m_{n-1})$ is non-decreasing. 
		\item  If $k_1 < n-1$ then there exist two functions, $t_1(k)$ and $t_2(k)$,  mapping the set $\{k_1 +1,\ldots,n-1\}$ to $\{1,\ldots,n-1\}$, and satisfying two additional properties:
		\begin{itemize} \item[a.] For $k$ such that $k_1 <k <n$ the equality  $m_{t_1(k)}+m_{t_2(k)}=m_k$ and inequalities $t_1(k), t_2(k) <k$ hold. \item[b.] For all $h_1,h_2$ such that $k_1<h_1<h_2<n$   at least one of the following two inequalities holds: $t_1(h_1)<t_1(h_2)$ or $t_2(h_1)<t_2(h_2)$. \end{itemize}
	\end{enumerate}
\end{definition}

\begin{proposition} \label{pr_full}\cite[Proposition 3.2]{GutK19}
	Characteristic sequence of any generating set of any finite-dimensional unital algebra is proto-characteristic.
\end{proposition}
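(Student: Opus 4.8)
The plan is to verify the four defining properties of a proto-characteristic sequence directly from the construction of the characteristic sequence $(m_0,\ldots,m_{n-1})$ of $\SS$ in $\A$, where $n=\dim\A$ (I assume $n\ge 2$; the case $n=1$ is degenerate). Properties (1)--(3) are immediate. Indeed $m_0=0$ by definition; setting $k_1=s_1=\dim\L_1(\SS)-1$ we get $m_1=\cdots=m_{k_1}=1$, and $\dim\L_1\ge 2$ forces $k_1\ge 1$, since otherwise every generator would lie in $\F\cdot 1$ and $\A=\F$. If $k_1<n-1$, then $m_{k_1+1}$ is the first value assigned at a level $k\ge 2$, so $m_{k_1+1}\ge 2>1$. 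Monotonicity holds because $\SS^{k-1}\subseteq\SS^k$ gives $\L_{k-1}\subseteq\L_k$, hence $s_k\ge 0$, and the value $k$ is assigned only after all smaller values.

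The substance is property (4). The key structural fact I would isolate first is the identity
\begin{equation}
\L_k=\L_{k-1}+\sum_{\substack{a+b=k\\ a,b\ge 1}}\L_a\L_b,
\end{equation}
which holds because a word of length exactly $k\ge 2$ splits at its outermost multiplication into two words of lengths $a,b\ge 1$ with $a+b=k$. Using this I would build, by induction on the level $k$, a basis of $\A$ consisting of words $v_0,\ldots,v_{n-1}$ with $l(v_i)=m_i$, indexed so that $\{v_j:m_j\le k\}$ is a basis of $\L_k$, and such that each $v_i$ with $m_i\ge 2$ is a product $v_i=v_{t_1(i)}v_{t_2(i)}$ of two earlier basis words. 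The base case $k=1$ takes $v_0=1$ and $v_1,\ldots,v_{k_1}$ a set of generators independent modulo $\F\cdot 1$. In the inductive step, passing to the quotient $\L_k/\L_{k-1}$ kills every product $v_jv_{j'}$ with $m_j+m_{j'}<k$, so by the identity above the classes $\{\,\overline{v_jv_{j'}}:m_j+m_{j'}=k,\ m_j,m_{j'}\ge 1\,\}$ span $\L_k/\L_{k-1}$; I extract $s_k=\dim(\L_k/\L_{k-1})$ of them forming a basis and declare the corresponding products to be the new basis words. Setting $t_1(i),t_2(i)$ to be the indices of the two factors of $v_i$ yields property 4(a): $m_{t_1(i)}+m_{t_2(i)}=m_i$, and since the factors have level $<m_i$ their indices are $<i$; as both factors have level $\ge 1$, also $t_1(i),t_2(i)\in\{1,\ldots,n-1\}$.

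It remains to arrange property 4(b), and this is where the only real care is needed. I would split a pair $h_1<h_2$ (with $h_1,h_2>k_1$) according to whether $m_{h_1}<m_{h_2}$ or $m_{h_1}=m_{h_2}$. The across-level case is automatic: from $m_{t_1(h_i)}+m_{t_2(h_i)}=m_{h_i}$ and $m_{h_1}<m_{h_2}$ we cannot have both $m_{t_1(h_2)}\le m_{t_1(h_1)}$ and $m_{t_2(h_2)}\le m_{t_2(h_1)}$, so some factor of $h_2$ has strictly larger level than the matching factor of $h_1$; since $m$ is non-decreasing in the index by property (3), the corresponding index strictly increases, giving one of the two required inequalities. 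For the within-level case I am free to order the $s_k$ new words arbitrarily, as they all carry the value $k$; ordering them lexicographically by the index pairs $(t_1,t_2)$ makes the first coordinate non-decreasing with ties broken by a strictly increasing second coordinate, and the pairs are pairwise distinct because the classes $\overline{v_{t_1}v_{t_2}}$ were chosen linearly independent. Hence for $h_1<h_2$ at the same level either $t_1(h_1)<t_1(h_2)$, or $t_1(h_1)=t_1(h_2)$ and $t_2(h_1)<t_2(h_2)$, which is exactly property 4(b).

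I expect 4(b) to be the main obstacle, and the crucial point that makes it tractable is that cross-level comparisons come for free from the level identity together with the monotonicity of $m$, leaving only the single-level problem, which the lexicographic ordering resolves.
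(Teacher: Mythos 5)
Your proof is correct. The paper itself gives no inline proof of this proposition (it is quoted from \cite[Proposition 3.2]{GutK19}), but your argument --- constructing level by level a basis of words in which every basis word of length at least $2$ is a product of two earlier basis words, then securing condition 4(b) by splitting into the across-level case (where $m_{t_1(h_1)}+m_{t_2(h_1)}<m_{t_1(h_2)}+m_{t_2(h_2)}$ plus monotonicity of $M$ forces one index to grow) and the within-level case (lexicographic ordering of the distinct factor-index pairs) --- is essentially the same construction this paper carries out in Section 5, in Lemmas \ref{lem_subbase} and \ref{lem_meekbase} and Proposition \ref{prop_goodbase}.
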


\begin{theorem}\label{th_ex}\cite[Theorem 3.19]{GutK19}
	Let    $M=(m_0, \ldots, m_{n-1})$, where $n \ge 2$, be a proto-characteristic sequence.
	
	Then there  exists a  unital algebra $\A$ over a field $\F$ with a generating sequence $\SS$, such that  the following conditions hold:
	
	\begin{enumerate}
		\item $\dim \A =n$.
		\item Characteristic sequence of $\SS$ coincides with $(m_0, \ldots, m_{n-1})$.
		\item $l(\A)=m_{n-1}$.
	\end{enumerate}
\end{theorem}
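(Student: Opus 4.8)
The plan is to prove the theorem by an explicit construction: from the proto-characteristic sequence $M$ I build an $\NN$-graded algebra whose multiplication is dictated by the decomposition functions $t_1, t_2$, and I exhibit a generating set reproducing $M$. Concretely, let $\A$ be the $\F$-vector space with basis $e_0, e_1, \ldots, e_{n-1}$, set $e_0 = 1$ and take $\SS = \{e_1, \ldots, e_{k_1}\}$ with $k_1$ as in Definition \ref{PrCharSeq}. I define the product bilinearly by letting $e_0$ act as the identity, setting $e_{t_1(k)} e_{t_2(k)} = e_k$ for each $k$ with $k_1 < k < n$, and setting every other product of basis vectors to $0$. (When $k_1 = n-1$ there are no $t$-functions and all products among generators vanish, so the statement is immediate; so assume $k_1 < n-1$.)

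First I would verify that the product is well defined, i.e. that no ordered pair $(e_i, e_j)$ is assigned two distinct values. This is exactly what property 4b of Definition \ref{PrCharSeq} guarantees: if $h_1 < h_2$ then at least one of $t_1, t_2$ strictly increases, so $k \mapsto (t_1(k), t_2(k))$ is injective and the table is consistent. Since we require no associativity, a well-defined bilinear product is all that is needed to obtain a unital algebra of dimension $n$.

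The key idea is to read the filtration off an $\NN$-grading. Put $\A_d = \langle e_j : m_j = d\rangle$; property 4a, $m_{t_1(k)} + m_{t_2(k)} = m_k$, says that every nonzero product lands in the degree equal to the sum of the degrees of its factors, so $\A = \bigoplus_{d\ge 0}\A_d$ is graded. Using $t_1(k), t_2(k) < k$ I would show by induction on $k$ that each $e_k$ is realized by a word in $\SS$ of length exactly $m_k$ (the base case $m_k = 1$ is a generator; the step multiplies the words for $e_{t_1(k)}$ and $e_{t_2(k)}$). Conversely the grading forces every word of length $r$ into $\A_r$, so $\L_r(\SS) = \bigoplus_{d\le r}\A_d$ and hence $\dim \L_r - \dim \L_{r-1} = \dim \A_r = |\{j : m_j = r\}|$. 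By Definition \ref{CharSeqUn} this multiplicity pattern is exactly $M$, so the characteristic sequence of $\SS$ is $M$; since $M$ is non-decreasing, its last term $m_{n-1}$ is the top degree and $l(\SS) = m_{n-1}$.

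The remaining step, upgrading $l(\SS) = m_{n-1}$ to $l(\A) = m_{n-1}$, is the one I expect to be the main obstacle, because $l(\A)$ is a maximum over all generating sets whereas so far only $\SS$ is under control; the inequality $l(\A) \ge m_{n-1}$ is clear and the reverse is the real work. My plan is to use that the augmentation ideal $\A_+ = \bigoplus_{d\ge 1}\A_d$ is nilpotent, in the sense that any product of more than $m_{n-1}$ of its elements has degree exceeding $m_{n-1}$ and so vanishes. Given an arbitrary generating set $\SS'$, I would write each generator as $\lambda\cdot 1 + x$ with $x \in \A_+$ and expand a word of length $r > m_{n-1}$; absorbing the unit factors, every surviving term is a bracketed product of at most $m_{n-1}$ of the elements $x$, each of which lies in $\L_1(\SS')$, so the term lies in $\L_{m_{n-1}}(\SS')$. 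Thus $\L_{m_{n-1}}(\SS') = \A$ and $l(\SS') \le m_{n-1}$ for every $\SS'$, which gives $l(\A) = m_{n-1}$ and finishes the proof.
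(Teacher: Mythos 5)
Your proof is correct: the injectivity of $k \mapsto (t_1(k),t_2(k))$ from property 4b makes the table well defined, the grading $\A = \bigoplus_d \A_d$ with $\A_d = \langle e_j : m_j = d\rangle$ gives $\L_r(\SS) = \bigoplus_{d\le r}\A_d$ and hence the right characteristic sequence, and the argument that any bracketed product of more than $m_{n-1}$ elements of the augmentation ideal vanishes correctly bounds $l(\SS')$ for \emph{every} generating set, which is the step most easily overlooked. Note that the present paper does not prove this theorem but imports it from \cite[Theorem 3.19]{GutK19}; your explicit construction (basis indexed by $M$, products dictated by $t_1,t_2$, all other products zero) is essentially the same one used in that reference, so your argument matches the source's approach rather than offering a genuinely different route.
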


\begin{definition}\label{Def_Irr}
	A word $w$  from a generating set $\SS$ of an algebra $\A$ is   {\em irreducible}, if for each integer $m,\ 0 \leq m<l(w),$ it holds  that $w \notin L_m(\SS)$.
\end{definition}

\begin{lemma}\label{lem_1} \cite[Lemma 2.14]{GutK18}
	An irreducible word of the length greater than 1 is a product of two irreducible words of non-zero lengths.
\end{lemma}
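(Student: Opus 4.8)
The plan is to exploit the outermost multiplication of the word together with the distributivity (bilinearity) of the algebra product, and to argue by contradiction that both of the resulting factors are forced to be irreducible. No associativity is needed, so the reasoning stays valid in the general nonassociative setting.

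First I would note that, since $l(w)>1$, the word $w$ is a product of at least two letters of $\SS$, and hence carries a well-defined top-level bracketing $w=a\cdot b$, where $a$ and $b$ are subwords with $l(a),l(b)\ge 1$ and $l(a)+l(b)=l(w)$. This is the candidate factorization; it only remains to verify that $a$ and $b$ are themselves irreducible. Suppose for contradiction that $a$ is not irreducible. By Definition~\ref{Def_Irr} there is an integer $m$ with $0\le m<l(a)$ such that $a\in\L_m(\SS)$, that is, $a=\sum_i \alpha_i c_i$ for scalars $\alpha_i\in\F$ and words $c_i$ with $l(c_i)\le m$. Using bilinearity of the product I would then write $w=a\cdot b=\sum_i \alpha_i\,(c_i\cdot b)$. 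Each $c_i\cdot b$ is again a word, of length $l(c_i)+l(b)\le m+l(b)<l(a)+l(b)=l(w)$, so $c_i b\in\SS^{\,l(w)-1}$. Therefore $w\in\L_{l(w)-1}(\SS)$, contradicting the irreducibility of $w$. Hence $a$ is irreducible, and the symmetric argument — expanding $b=\sum_j\beta_j d_j$ with $l(d_j)<l(b)$ and writing $w=a\cdot b=\sum_j\beta_j\,(a\cdot d_j)$ — shows that $b$ is irreducible as well. This yields the desired factorization $w=a\cdot b$ into two irreducible words of non-zero length.

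The argument is short, and I expect the only genuine point requiring care to be the length bookkeeping: one must check that the top-level split produces two factors of strictly positive length (which holds because a word of length at least $2$ is a binary product of two nonempty subwords, with no identity factor occurring), and that multiplying a length-reducing expansion on one side by the other factor strictly lowers the total length, placing $w$ inside $\L_{l(w)-1}(\SS)$. Once these elementary facts are in place, the contradiction with Definition~\ref{Def_Irr} is immediate and the statement follows.
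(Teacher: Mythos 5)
Your proof is correct and follows the standard argument for this lemma: take the outermost factorization $w = a\cdot b$, then use bilinearity to show that reducibility of either factor would place $w$ in $\L_{l(w)-1}(\SS)$, contradicting Definition~\ref{Def_Irr}. The paper itself only cites this result from \cite{GutK18} rather than reproving it, but your argument is essentially the proof given there, including the careful handling of the length bookkeeping.
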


\begin{lemma}\cite[Lemma 3.4]{GutK18}\label{lem_chseq}

 Let $\A$ be an $\F$-algebra, $dim \A =n >2$, and $\SS$ be a generating set for $\A$. Then

\begin{enumerate}

\item Positive integer $k$ appears in the characteristic sequence as many times as many there are linearly independent irreducible words of length $k$.
\item For any term $m_h$ of the characteristic sequence of $\SS$ there is an irreducible word in $\SS$ of length $m_h$.
\item If there is an irreducible word in $\SS$ of length $k$, then $k$ is included into the characteristic sequence of $\SS$.
 
\end{enumerate}

\end{lemma}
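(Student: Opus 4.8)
The plan is to reduce all three statements to the single identity $s_k=\dim\L_k(\SS)-\dim\L_{k-1}(\SS)$, which by Definition~\ref{CharSeqUn} is exactly the number of times the integer $k$ occurs in the characteristic sequence, together with an elementary translation of irreducibility into a membership condition. First I would record the key observation: a word $w$ with $l(w)=k$ is irreducible if and only if $w\notin\L_{k-1}(\SS)$. Indeed, since $\L_m(\SS)\subseteq\L_{k-1}(\SS)$ for every $m\le k-1$, the defining condition $w\notin\L_m(\SS)$ for all $m<k$ collapses to the single requirement $w\notin\L_{k-1}(\SS)$; and a word of length $k$ always lies in $\L_k(\SS)$.

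For part~1 I would work in the quotient space $\L_k(\SS)/\L_{k-1}(\SS)$, whose dimension is $s_k$. Because $\L_k(\SS)$ is spanned by all words of length at most $k$, while every word of length at most $k-1$ already lies in $\L_{k-1}(\SS)$, the images of the words of length exactly $k$ span this quotient. By the observation above, a length-$k$ word maps to $0$ precisely when it is reducible, so the nonzero images are contributed exactly by the irreducible words of length $k$, and these images still span the whole quotient. Choosing among the irreducible words of length $k$ a family whose images form a basis of $\L_k(\SS)/\L_{k-1}(\SS)$ therefore yields a maximal family of irreducible words of length $k$ that are linearly independent modulo $\L_{k-1}(\SS)$, and its cardinality is $s_k$. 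This matches the multiplicity of $k$ in the characteristic sequence.

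Parts~2 and~3 then follow at once. If $m_h=k$ is a term of the sequence, then $k$ occurs, so $s_k\ge 1$ and $\L_{k-1}(\SS)\subsetneq\L_k(\SS)$; as every word of length below $k$ lies in $\L_{k-1}(\SS)$, some spanning word of length exactly $k$ must lie outside $\L_{k-1}(\SS)$, and by the observation it is irreducible of length $k=m_h$, proving part~2. Conversely, an irreducible word of length $k$ lies in $\L_k(\SS)\setminus\L_{k-1}(\SS)$, so again $s_k\ge 1$ and $k$ is included in the sequence, proving part~3.

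The main obstacle I anticipate is the counting in part~1, specifically the upper bound asserting that there are no more than $s_k$ linearly independent irreducible words of length $k$. The subtlety is that linear independence in $\A$ is a priori weaker than linear independence modulo $\L_{k-1}(\SS)$, so one must be careful to phrase the count in the quotient, where reducible words are killed and the dimension is exactly $s_k$; passing to $\L_k(\SS)/\L_{k-1}(\SS)$ is precisely what makes both the spanning statement and the independence bound transparent. Verifying that reducible length-$k$ words genuinely contribute nothing, i.e.\ that they lie in $\L_{k-1}(\SS)$, is the one place where the definition of irreducibility must be invoked cleanly.
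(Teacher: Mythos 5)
Your proof is correct, but there is nothing in this paper to compare it against: the lemma is quoted verbatim from \cite{GutK18} (Lemma 3.4 there) as a known tool, and the present paper gives no proof of it. Judged on its own, your argument is complete and is the natural dimension count: the multiplicity of $k$ in the characteristic sequence is by construction $s_k=\dim\L_k(\SS)-\dim\L_{k-1}(\SS)$ (for $k=1$ this uses $\L_0(\SS)=\langle 1\rangle$, so $s_1=\dim\L_1(\SS)-1$ is again a codimension), irreducibility of a length-$k$ word collapses to the single condition $w\notin\L_{k-1}(\SS)$ by monotonicity of the chain, and all three parts then read off the quotient $\L_k(\SS)/\L_{k-1}(\SS)$. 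One point you raise deserves emphasis: your caution about \emph{where} linear independence is measured is not pedantry, since the literal reading of Part 1 (independence in $\A$) is in fact false. For example, take $\A=\langle 1,a,b,c\rangle$ with $a^2=c$, $ab=c+a$, and all other products of non-unit basis elements zero, and $\SS=\{a,b\}$: the characteristic sequence is $(0,1,1,2)$, so $2$ has multiplicity one, yet $a^2=c$ and $ab=c+a$ are two irreducible words of length $2$ that are linearly independent in $\A$ (they merely coincide modulo $\L_1(\SS)$). So the count must be taken modulo $\L_{k-1}(\SS)$, exactly as you phrase it; this is the intended reading of the cited lemma, and your quotient formulation is the right way to make the statement both precise and true.
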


\begin{lemma}\cite[Lemma 3.5]{GutK18} \label{N=n-1}
Let $\A$ be an $\F$-algebra, $\dim \A =n >2$, and $\SS$ be a generating set for A. Then the characteristic sequence of $\SS$ contains $n$ terms, i.e., $N=n -1$. Moreover, $m_N=l(\SS)$.
\end{lemma}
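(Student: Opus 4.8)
The plan is to read off both assertions directly from the construction in Definition~\ref{CharSeqUn} by means of a telescoping count. First I would record the key bookkeeping observation: for each integer $k \ge 1$, the value $k$ occupies exactly $s_k = \dim \L_k(\SS) - \dim \L_{k-1}(\SS)$ consecutive places in the characteristic sequence. Here I use that unitality gives $\dim \L_0(\SS) = 1$, so the step-$1$ rule $s_1 = \dim \L_1(\SS) - 1$ is just the instance $s_1 = \dim \L_1(\SS) - \dim \L_0(\SS)$ of the general formula. Consequently every term of the sequence other than $m_0 = 0$ belongs to exactly one of these blocks of equal values.

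Next I would locate where the sequence terminates. Put $L = l(\SS) = \min\{k : \L_k(\SS) = \A\}$. For every $k > L$ we have $\L_k(\SS) = \L_{k-1}(\SS) = \A$, hence $s_k = 0$ and no further terms appear; thus the largest value occurring is at most $L$. Conversely, since $n > 2$ forces $L \ge 1$ and $\L_{L-1}(\SS) \subsetneq \A$, we get $s_L = n - \dim \L_{L-1}(\SS) > 0$, so the value $L$ genuinely occurs and is the maximal value in the sequence. As the sequence is non-decreasing, its last term is this maximal value, giving $m_N = L = l(\SS)$, which is the second assertion.

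It remains to count the terms, and here the block sizes telescope:
\[
\sum_{k=1}^{L} s_k = \sum_{k=1}^{L}\bigl(\dim \L_k(\SS) - \dim \L_{k-1}(\SS)\bigr) = \dim \L_L(\SS) - \dim \L_0(\SS) = n - 1.
\]
Adding the single term $m_0 = 0$ yields $n$ terms in total, indexed $m_0, m_1, \ldots, m_{n-1}$, so $N = n - 1$, which is the first assertion. The only point demanding care is the normalization at the bottom of the range, namely confirming that $\dim \L_0(\SS) = 1$ makes the step-$1$ rule consistent with the telescoping sum; once this is fixed I expect no substantial obstacle, as the remainder is pure arithmetic.
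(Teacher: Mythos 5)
Your proof is correct and is essentially the only natural argument here: the paper states this lemma as a citation of \cite[Lemma 3.5]{GutK18} without reproducing its proof, and that proof is precisely the bookkeeping you carry out from Definition~\ref{CharSeqUn}. The telescoping sum $\sum_{k=1}^{l(\SS)} s_k = \dim \L_{l(\SS)}(\SS) - \dim \L_0(\SS) = n-1$ (using unitality for $\dim \L_0(\SS)=1$), together with the extra term $m_0=0$ and your observation that $s_k>0$ for $k=l(\SS)$ while $s_k=0$ for all $k>l(\SS)$, gives both $N=n-1$ and $m_N=l(\SS)$ exactly as required.
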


\begin{lemma}\label{lem_redux}\cite[Lemma 2.11]{GutK18}
If $\A$ is an algebra and $\SS_0$ and $\SS_1$ are its finite subsets such that $\L_1(\SS_0) =\L_1(\SS_1)$, then $\L_k(\SS_0) =\L_k(\SS_1)$ for every positive integer~$k$.
\end{lemma}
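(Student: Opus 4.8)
The plan is to induct on $k$, with the base case $k=1$ being exactly the hypothesis $\L_1(\SS_0)=\L_1(\SS_1)$. The engine of the induction is a decomposition of $\L_k$ into products of lower-order spans, so I would first record the auxiliary fact that multiplication is sub-additive on these spans: for any finite subset $\SS$ and any $i,j\ge 0$,
$$\L_i(\SS)\,\L_j(\SS)\subseteq \L_{i+j}(\SS),$$
where the left-hand side denotes the linear span of all products $xy$ with $x\in\L_i(\SS)$, $y\in\L_j(\SS)$. This is immediate from two observations: the length of a product of words is the sum of their lengths, so a word of length $\le i$ times a word of length $\le j$ is a word of length $\le i+j$; and multiplication in $\A$ is bilinear (this uses only the distributive laws, not associativity), so the inclusion passes from individual words to their linear combinations.

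Next I would establish the key decomposition: for every $k\ge 2$,
$$\L_k(\SS)=\L_{k-1}(\SS)+\sum_{\substack{i+j=k\\ i,j\ge 1}}\L_i(\SS)\,\L_j(\SS).$$
The inclusion $\supseteq$ follows from the auxiliary fact. For $\subseteq$ it suffices to treat a single word $w$ of length exactly $k$, since words of length $\le k-1$ lie in $\L_{k-1}(\SS)$. Here the nonassociative setting forces the one genuinely careful point of the argument: such a $w$ need not be a generator times a shorter word, but by the definition of a word its outermost multiplication expresses it as $w=uv$ for subwords $u,v$ of positive lengths with $l(u)+l(v)=k$; hence $l(u),l(v)\le k-1$ and $w\in\L_{l(u)}(\SS)\,\L_{l(v)}(\SS)$, a summand on the right. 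This is why the sum must range over all splittings $i+j=k$ rather than only the split $1+(k-1)$, and it is the place where most of the thought goes.

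With these two facts in hand the induction is mechanical. Suppose $\L_j(\SS_0)=\L_j(\SS_1)$ for all $0\le j<k$; the case $j=0$ is trivial from unitality and $j=1$ is the hypothesis, so the induction starts. Applying the decomposition to both $\SS_0$ and $\SS_1$ and substituting the induction hypothesis into every term — note that in the displayed sum all indices satisfy $i,j\le k-1<k$, and $\L_{k-1}$ also has index below $k$ — yields
\begin{align*}
\L_k(\SS_0)&=\L_{k-1}(\SS_0)+\sum_{\substack{i+j=k\\ i,j\ge 1}}\L_i(\SS_0)\,\L_j(\SS_0)\\
&=\L_{k-1}(\SS_1)+\sum_{\substack{i+j=k\\ i,j\ge 1}}\L_i(\SS_1)\,\L_j(\SS_1)=\L_k(\SS_1),
\end{align*}
which completes the step and the proof. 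I do not expect any serious obstacle beyond the nonassociative bookkeeping in the decomposition: the statement is essentially the assertion that the entire filtration $\{\L_k(\SS)\}$ is determined by its degree-one layer $\L_1(\SS)$, and the multiplicative structure propagates this agreement upward automatically.
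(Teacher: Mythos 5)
Your proof is correct. Note that this paper does not prove the lemma at all---it is imported by citation from \cite[Lemma 2.11]{GutK18}---and your argument (bilinearity giving $\L_i(\SS)\L_j(\SS)\subseteq\L_{i+j}(\SS)$, the decomposition of $\L_k(\SS)$ over all splittings $i+j=k$ with $i,j\ge 1$ forced by nonassociativity, and strong induction on $k$) is essentially the standard argument used in that reference, with the one genuinely delicate point (that a word of length $k\ge 2$ splits at its outermost multiplication into two subwords of positive lengths, cf.\ Lemma \ref{lem_1}) handled correctly.
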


\begin{proposition}\label{prop_max}\cite[Theorem 2.7]{GutK18}
Let $\A$ be an $\F$-algebra, $\dim \A =n > 2$. Then $l(\A) \le 2^{n-2}$.
\end{proposition}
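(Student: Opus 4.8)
The plan is to reduce the statement about $l(\A)$ to a purely combinatorial estimate on characteristic sequences. Since $l(\A) = \max\{l(\SS) : \L(\SS) = \A\}$, it suffices to bound $l(\SS)$ for an arbitrary generating set $\SS$ of $\A$. To this end I would pass to the characteristic sequence $M = (m_0, \ldots, m_{n-1})$ of $\SS$. By Proposition \ref{pr_full} this sequence is proto-characteristic, by Lemma \ref{N=n-1} it has exactly $n$ terms, and its last term satisfies $m_{n-1} = l(\SS)$. Thus the whole proposition follows once I show that $m_{n-1} \le 2^{n-2}$ for any proto-characteristic sequence with $n$ terms.

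The key step is a doubling estimate along the sequence. Let $k_1$ be as in property 2 of Definition \ref{PrCharSeq}, so that $m_1 = \ldots = m_{k_1} = 1$; note $k_1 \ge 1$. If $k_1 = n-1$ the sequence is constant equal to $1$ past $m_0$, hence $l(\SS) = m_{n-1} = 1 \le 2^{n-2}$ and we are done. Otherwise, for each index $k$ with $k_1 < k \le n-1$, property 4a provides indices $t_1(k), t_2(k) < k$ with $m_k = m_{t_1(k)} + m_{t_2(k)}$. Because the sequence is non-decreasing (property 3) and both indices are at most $k-1$, each summand is bounded by $m_{k-1}$, which yields the central inequality $m_k \le 2\,m_{k-1}$.

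Iterating this bound from the base value $m_{k_1} = 1$ gives $m_{k_1 + j} \le 2^{j}$ for all $0 \le j \le n - 1 - k_1$, and in particular $m_{n-1} \le 2^{\,n-1-k_1}$. Since $k_1 \ge 1$, the exponent is at most $n-2$, so $m_{n-1} \le 2^{n-2}$, completing the argument.

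I expect the bookkeeping around $k_1$ to be the only place demanding care: the bound $m_{n-1} \le 2^{\,n-1-k_1}$ is sharp, and it is precisely the guaranteed presence of at least one term equal to $1$ (i.e.\ $k_1 \ge 1$, coming from the unital structure via $m_1 = 1$) that buys the extra factor turning $2^{n-1}$ into $2^{n-2}$. The substantive content is really carried by the cited results—that characteristic sequences are proto-characteristic and that $m_{n-1} = l(\SS)$—which encode the nonassociative analogue of the recursion ``an irreducible word of length greater than $1$ splits as a product of two shorter irreducible words'' from Lemma \ref{lem_1}; one could in principle run the same doubling argument directly on word lengths via Lemma \ref{lem_1}, but routing it through the proto-characteristic formalism keeps the induction clean.
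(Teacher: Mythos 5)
Your proof is correct, but note that this paper never actually proves Proposition \ref{prop_max}: it is imported verbatim as \cite[Theorem 2.7]{GutK18}, so the only comparison available is with the surrounding machinery the paper does state. Against that toolkit, your argument is a genuine self-contained re-derivation rather than the shortest path: since Lemma \ref{N=n-1} gives $l(\SS) = m_{n-1}$ and Proposition \ref{prop_bound} (also cited from \cite{GutK18}) gives $m_h \le 2^{h-1}$, the statement follows immediately by taking $h = n-1$. Your central doubling inequality $m_k \le 2m_{k-1}$ is precisely Lemma \ref{lem_doublebound} of the paper; there it is deduced from Proposition \ref{prop_add}, whereas you deduce it from property 4a of Definition \ref{PrCharSeq} together with monotonicity --- the two derivations are equivalent (Proposition \ref{prop_add} is essentially property 4a without the ordering condition 4b), and your version even avoids the hypotheses $n>4$, $h\ge 2$ under which Lemma \ref{lem_doublebound} is stated. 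What your route makes explicit, and the paper's citations leave implicit, is exactly where unitality enters: the guaranteed initial block of ones ($k_1 \ge 1$) is what improves the trivial bound $2^{n-1}$ to $2^{n-2}$, and that accounting is sharp. Two minor remarks: the proposition as restated drops the word ``unital'', but your proof (like the paper's entire framework, and the original theorem quoted in the introduction) concerns unital algebras, which is the intended reading; and your closing observation is apt --- in \cite{GutK18} Theorem 2.7 precedes the introduction of characteristic sequences, so the original proof necessarily runs the same doubling recursion directly on lengths of irreducible words via Lemma \ref{lem_1}, which is the alternative you sketch.
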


\begin{proposition}\label{prop_add}\cite[Proposition 3.7]{GutK18}
Let $\A$ be an $\F$-algebra, $\dim \A =n >2$. Assume that $\SS$ is a generating set for $\A$ and $(m_0, m_1, \ldots, m_{n-1})$ is the characteristic sequence of $\SS$. Then for each $h$ satisfying $m_h\ge 2$ it holds that there are indices $0 <t_1\le t_2<h$ such that $m_h=m_{t_1}+m_{t_2}$.
\end{proposition}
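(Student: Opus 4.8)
The plan is to realize the term $m_h$ by an irreducible word and then factor that word. By part (2) of Lemma~\ref{lem_chseq}, since $m_h$ is a term of the characteristic sequence there exists an irreducible word $w$ in $\SS$ with $l(w)=m_h$. Because $m_h\ge 2$, this word has length greater than $1$, so Lemma~\ref{lem_1} applies and yields a factorization $w=w_1 w_2$ into two irreducible words $w_1,w_2$ of nonzero lengths. Writing $a=l(w_1)$ and $b=l(w_2)$, the additivity of word length gives $a+b=l(w)=m_h$, and since each factor has positive length we also obtain the strict bounds $1\le a<m_h$ and $1\le b<m_h$.

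Next I would promote the lengths $a$ and $b$ back into the sequence. As $w_1$ and $w_2$ are irreducible, part (3) of Lemma~\ref{lem_chseq} guarantees that both $a$ and $b$ occur as terms of the characteristic sequence of $\SS$. I would then choose $t_1,t_2$ to be indices with $m_{t_1}=a$ and $m_{t_2}=b$, say the least such indices, and after relabelling if necessary so that $a\le b$, monotonicity of the sequence forces $t_1\le t_2$. This gives $m_h=a+b=m_{t_1}+m_{t_2}$, the desired decomposition.

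It remains to verify the index constraints $0<t_1\le t_2<h$, and this is the only point needing care. Positivity is immediate: since $m_0=0$ while $a,b\ge 1$, neither $t_1$ nor $t_2$ can equal $0$. The upper bound uses the strict inequalities $a,b<m_h$ together with the fact that $(m_0,\ldots,m_{n-1})$ is non-decreasing: if one had $t_i\ge h$ for some $i$, monotonicity would give $m_{t_i}\ge m_h$, contradicting $m_{t_i}\in\{a,b\}<m_h$. Hence $t_1,t_2<h$, completing the argument.

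I expect no serious obstacle; the statement is essentially an immediate consequence of the factorization Lemma~\ref{lem_1} combined with the dictionary between terms of the characteristic sequence and irreducible words supplied by Lemma~\ref{lem_chseq}. The one step that must not be glossed over is the derivation of the \emph{strict} bound $a,b<m_h$ from the positivity of \emph{both} factor lengths, since it is precisely this strictness that, via monotonicity, delivers the required $t_1,t_2<h$ rather than merely $t_1,t_2\le h$.
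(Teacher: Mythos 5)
Your proof is correct, and it is essentially the intended argument: note that the paper itself gives no proof of this proposition (it is quoted from \cite[Proposition 3.7]{GutK18}), but the route you take --- realize the term $m_h$ by an irreducible word via Lemma~\ref{lem_chseq}(2), factor it into two irreducible words of positive lengths by Lemma~\ref{lem_1}, return those lengths to the characteristic sequence via Lemma~\ref{lem_chseq}(3), and then use the strict bounds $a,b<m_h$ together with monotonicity to force $0<t_1\le t_2<h$ --- is precisely the derivation that the quoted lemmas are designed to provide, and your care about the strictness of $a,b<m_h$ is exactly the right point to flag.
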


\begin{proposition}\label{prop_bound}\cite[Theorem 3.8]{GutK18}
 Let $\A$ be an $\F$-algebra,   $\dim \A =n$, $n >2$. Let also $\SS$ be a generating set for $\A$,  $(m_0, m_1, \ldots, m_{n-1})$ be the characteristic sequence of $\SS$. Then for each positive integer $h \le n -1$ it holds that $m_h\le 2^{h-1}$.

\end{proposition}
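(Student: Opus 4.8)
The plan is to prove the inequality by strong induction on $h$, with Proposition~\ref{prop_add} serving as the engine that drives the induction. The key observation is that the additive decomposition $m_h = m_{t_1} + m_{t_2}$ with $t_1, t_2 < h$ provided by that proposition is precisely what is needed to halve the exponent at each step: each summand is bounded by $2^{h-2}$, and their sum is then bounded by $2^{h-1}$.

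First I would dispose of the base case. For $h = 1$ we always have $m_1 = 1$ by the construction of the characteristic sequence (indeed $m_0 = 0$ and $m_1 = \cdots = m_{s_1} = 1$ in Definition~\ref{CharSeqUn}), so $m_1 = 1 = 2^{0} = 2^{h-1}$ and the claim holds. For the inductive step, assume that $m_j \le 2^{j-1}$ for every positive integer $j < h$, and split into two cases according to the value of $m_h$. If $m_h = 1$, then trivially $m_h = 1 \le 2^{h-1}$ since $h \ge 1$. If $m_h \ge 2$, then in particular $h \ge 2$ (because $m_1 = 1$), and Proposition~\ref{prop_add} yields indices $0 < t_1 \le t_2 < h$ with $m_h = m_{t_1} + m_{t_2}$. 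Applying the inductive hypothesis to both summands, together with the estimates $t_1 - 1 \le h - 2$ and $t_2 - 1 \le h - 2$ coming from $t_1, t_2 \le h - 1$, gives $m_{t_1} \le 2^{t_1 - 1} \le 2^{h-2}$ and $m_{t_2} \le 2^{t_2 - 1} \le 2^{h-2}$. Hence $m_h = m_{t_1} + m_{t_2} \le 2^{h-2} + 2^{h-2} = 2^{h-1}$, completing the induction.

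Honestly, there is no serious obstacle here once Proposition~\ref{prop_add} is in hand; the argument is essentially a routine doubling induction. The only points requiring care are bookkeeping: checking that the base case is forced by the definition of the characteristic sequence, verifying that the hypothesis $m_h \ge 2$ indeed guarantees $h \ge 2$ so that the exponent $h-2$ is nonnegative, and confirming that the constraint $t_1, t_2 < h$ (rather than merely $t_1, t_2 \le h$) is what allows the exponents to drop to $h-2$. Each summand being a strictly earlier term of the sequence is exactly the feature that prevents the bound from degenerating.
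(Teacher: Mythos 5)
Your proof is correct: the strong induction with base $m_1=1$ and the doubling step $m_h = m_{t_1}+m_{t_2} \le 2^{h-2}+2^{h-2}$ via Proposition~\ref{prop_add} is exactly the standard argument, and you handle the needed bookkeeping ($m_h\ge 2$ forces $h\ge 2$; the strict inequalities $t_1,t_2<h$ drop the exponent) properly. Note that the paper itself gives no proof of this statement --- it is quoted from \cite{GutK18} --- so there is nothing in-text to compare against, but your route matches the way the surrounding results (e.g.\ Lemma~\ref{lem_doublebound} and Proposition~\ref{pr_2}) deploy Proposition~\ref{prop_add}, and is surely the intended one.
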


\section{General observations}

As it was already mentioned in the introduction, the length of a unital associative algebra is strictly less than its dimension and the corresponding bound is sharp. Moreover, we note that there are unital associative algebras of a given dimension $n$ and any length between $1$ and~$(n-1)$ as the following example shows.

\begin{example} \label{Ex_ass}
	Let us consider an arbitrary field $\F$ and an associative $\F$-algebra $\A_l$  with the  basis \{$e_0=1 , e_1, \ldots, e_{n-1}$\} and the multiplication determined by the following rule:

	$$e_p e_q = \begin{cases} e_p, & q=0; \\ e_q, & p=0; \\ e_{p+q}, & p \ge 1, q \ge 1, p+q \le l; \\  0, & \mathrm{otherwise.} \end{cases}$$
	
	It is straightforward to check that  $\A_l$ is associative and  $\dim(\A_l)=n$.
	
	Consider the generating set $S_0=\{e_1,e_{l+1},e_{l+2},\ldots,e_{n-1}\} $ of $\A_l$. Since  $e_l = e_1^l$ is an irreducible word of the length $l$, we have $l(\A_l) \ge l(S_0) = l$. 
	
	Let $\SS$ be an arbitrary generating set of $\A_l$ satisfying $l(\SS)= l(\A_l)$. We  consider the set $\SS'$, obtained from $\SS$ by nullifying coefficients at $e_0$ in the expansions of elements of $\SS$ via the basis $\{e_1,\ldots,e_{n-1}\}$. The equality $\L_1(\SS) =\L_1(\SS')$ holds. Hence by Lemma \ref{lem_redux} it follows that $\L_k(\SS) =\L_k(\SS')$ for every positive integer $k$. In particular this allows to conclude that $\SS'$ is a generating set and $l(\SS) = l(\SS')$. However, we have $l(\SS') \le l$ as a product of any $l+1$ elements of $\SS'$ is zero due to the fact that a product of any $l+1$ non-unit basis elements $e_{i_1},\ldots,e_{i_{l+1}}$ is zero. Thus  $l(\A_l) \le l$, which implies $l(\A_l) = l$.
\end{example}

Below $\A$ is again nonassociative algebra and we discuss some general properties of the values of its length. 

\begin{proposition}\label{pr_mon}
	Let $\A$ be a unital algebra over a field $\F$ such that $\dim \A = n$ and $l(\A) = l$. Then there exists a unital algebra $\A'$ such  that $\dim \A' = n+1$ and $l(\A')=l$.
\end{proposition}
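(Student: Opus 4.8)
The plan is to reduce the statement to a purely combinatorial fact about proto-characteristic sequences and then invoke the realizability Theorem~\ref{th_ex}. We may assume $n\ge 2$ (a one-dimensional unital algebra is $\F$ with $l=0$, and no two-dimensional algebra has length $0$). First I would pick a generating set $\SS$ of $\A$ with $l(\SS)=l(\A)=l$, which exists by Definition~\ref{alg_len}, and pass to its characteristic sequence $M=(m_0,\ldots,m_{n-1})$. By Proposition~\ref{pr_full} this sequence is proto-characteristic, and by Lemma~\ref{N=n-1} it has exactly $n$ terms with $m_{n-1}=l(\SS)=l$ (the case $n=2$ being handled directly, where $M=(0,1)$ and $l=1$). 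Thus it suffices to build a proto-characteristic sequence $M'=(m_0',\ldots,m_n')$ of length $n+1$ with $m_n'=l$: feeding $M'$ into Theorem~\ref{th_ex} then produces a unital algebra $\A'$ with $\dim\A'=n+1$ and $l(\A')=m_n'=l$, as required.

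The heart of the argument is the construction of $M'$. Let $k_1$ be as in Definition~\ref{PrCharSeq}, so that $m_1=\cdots=m_{k_1}=1$. I would obtain $M'$ from $M$ by inserting one extra entry equal to $1$ immediately after this initial block of ones; concretely, set $m_i'=m_i$ for $0\le i\le k_1$, set $m_{k_1+1}'=1$, and set $m_i'=m_{i-1}$ for $k_1+2\le i\le n$. When $l\ge 2$ we have $k_1\le n-2$, so the last entry is $m_n'=m_{n-1}=l$; in the degenerate case $l=1$ one has $k_1=n-1$ and $M'=(0,1,\ldots,1)$ consists of $n$ ones, again with $m_n'=1=l$. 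It is worth stressing that I deliberately insert a $1$ rather than simply repeat the maximal value $l$ at the end: duplicating $l$ would force the decomposition at the new last index to coincide with the one at index $n-1$, which violates the monotonicity requirement~4(b).

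It then remains to verify that $M'$ is proto-characteristic. Conditions~1 and~3 are immediate, since we insert a value into the block of ones and so preserve monotonicity and the value $m_0'=0$, and condition~2 holds with $k_1'=k_1+1$ because $m_{k_1+2}'=m_{k_1+1}>1$. For condition~4 I would introduce the strictly increasing index map $\sigma\colon\{1,\ldots,n-1\}\to\{1,\ldots,n\}$ defined by $\sigma(j)=j$ for $j\le k_1$ and $\sigma(j)=j+1$ for $j\ge k_1+1$, which satisfies $m_{\sigma(j)}'=m_j$, and then define $t_i'(k)=\sigma\bigl(t_i(k-1)\bigr)$ for $k\in\{k_1+2,\ldots,n\}$, where $t_1,t_2$ are the functions supplied for $M$. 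The additive identity $m_{t_1'(k)}'+m_{t_2'(k)}'=m_{k-1}=m_k'$ then follows from $m_{\sigma(j)}'=m_j$; the bounds $t_i'(k)<k$ follow from $t_i(k-1)<k-1$ together with $\sigma(k-1)=k$; and condition~4(b) for $M'$ is inherited from condition~4(b) for $M$ precisely because $\sigma$ is strictly increasing. The main obstacle is exactly this last point, which is what rules out the naive "repeat the top value" extension and forces the index-shift bookkeeping above. With condition~4 checked, $M'$ is proto-characteristic, and Theorem~\ref{th_ex} completes the proof.
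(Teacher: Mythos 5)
Your proof is correct and takes essentially the same route as the paper: the paper also inserts an extra entry $1$ into the characteristic sequence (right after $m_0$, which produces the identical sequence $M'$ as your insertion after position $k_1$, since the entries $1,\ldots,k_1$ are all ones) and then shifts the functions $t_1,t_2$ by one index before invoking Theorem~\ref{th_ex}. If anything, your bookkeeping via the strictly increasing map $\sigma$ is more precise than the paper's, whose stated formula $t_i'(k)=t_i(k)+1$ contains an index slip (it should be $t_i(k-1)+1$, exactly what your $\sigma\bigl(t_i(k-1)\bigr)$ gives).
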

\begin{proof}
	Let $\SS$ be a generating set of $\A$ with maximal length, $M=(m_0, \ldots, m_{n-1})$ be its generating sequence. By Lemma \ref{N=n-1} we have $m_{n-1} = l$.

	Note that by Proposition \ref{pr_full} the sequence $M$ is proto-characteristic with certain functions $t_1$ and $t_2$. 

	Consider the sequence $M'=(m'_0,\ldots, m'_n)=(m_0,1,m_1 \ldots, m_{n-1})$. It is also proto-characteristic: Properties 1-3 of the Definition \ref{PrCharSeq} can be straightforwardly checked, and for Property 4 we can define functions $t'_1(k)$ and $t'_2(k)$ as $t_1(k)+1$ and $t_2(k)+1$. To do this we observe that for all $j$ satisfying $m'_j >1$ it holds that $m'_j = m_{j-1}$. 

	By Theorem \ref{th_ex} there exists a unital algebra of dimension $n+1$ and length $m'_n = m_{n-1} = l$.
\end{proof}

\begin{proposition}\label{pr_double}Let $\A$ be a unital algebra over a field $\F$ such that $\dim \A = n$ and $l(\A) = l$. Then there exists a unital algebra $\A'$ such  that $\dim \A' = n+1$ and $l(\A')=2l$.
\end{proposition}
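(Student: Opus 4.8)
The plan is to mirror the strategy used for Proposition \ref{pr_mon}: reduce the statement to the construction of a suitable proto-characteristic sequence and then invoke Theorem \ref{th_ex}. First I would fix a generating set $\SS$ of $\A$ realizing the length, i.e. $l(\SS) = l(\A) = l$, and let $M = (m_0, \ldots, m_{n-1})$ be its characteristic sequence. By Lemma \ref{N=n-1} this sequence has exactly $n$ terms and $m_{n-1} = l$, while Proposition \ref{pr_full} guarantees that $M$ is proto-characteristic, say with witnessing functions $t_1$ and $t_2$.

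The core step is to append the doubled value to the end of the sequence: set $M' = (m'_0, \ldots, m'_n) = (m_0, \ldots, m_{n-1}, 2l)$, and then check that $M'$ is again proto-characteristic. Properties 1--3 of Definition \ref{PrCharSeq} are immediate: the first term is still $0$; the same $k_1$ as in $M$ works, since if $k_1 < n-1$ then $m'_{k_1+1} = m_{k_1+1} > 1$, and if $k_1 = n-1$ then $l = 1$ and $m'_n = 2l = 2 > 1$; and monotonicity is preserved because $2l \ge m_{n-1}$. For Property 4 I would extend the witnessing functions by declaring $t_1(n) = t_2(n) = n-1$, so that $m'_{t_1(n)} + m'_{t_2(n)} = m_{n-1} + m_{n-1} = 2l = m'_n$, with both indices strictly below $n$.

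The one point deserving care --- and the only plausible obstacle --- is the staircase condition 4b for the new pairs $(h_1, n)$ with $k_1 < h_1 < n$. This turns out to be automatic: since $t_1(h_1) < h_1 \le n-1 = t_1(n)$, the inequality $t_1(h_1) < t_1(n)$ always holds, so no conflict with the newly added term can occur, while condition 4b among the old indices is inherited verbatim from $M$. Having established that $M'$ is proto-characteristic, Theorem \ref{th_ex} produces a unital $\F$-algebra $\A'$ of dimension $n+1$ whose length equals the last term $m'_n = 2l$, which is exactly the assertion. I expect no further subtleties beyond confirming that the edge case $k_1 = n-1$ (equivalently $l = 1$) is handled by the choice of $k_1$ above.
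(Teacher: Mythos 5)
Your proposal is correct and follows essentially the same route as the paper's own proof: append $2l$ to the characteristic sequence, extend the witnessing functions by $t'_1(n)=t'_2(n)=n-1$, and invoke Theorem \ref{th_ex}. Your explicit verification of Property 4b via $t_1(h_1)<h_1\le n-1=t'_1(n)$ and of the edge case $k_1=n-1$ merely spells out details the paper leaves as "straightforwardly seen."
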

\begin{proof}
	Let $\SS$ be a generating set of $\A$ with maximal length, $M=(m_0, \ldots, m_{n-1})$ be its generating sequence. By Lemma \ref{N=n-1} we have $m_{n-1} = l$. 
	
	Note that by Proposition \ref{pr_full} the sequence $M$  is proto-characteristic with certain functions $t_1$ and $t_2$. 

	Consider the sequence $M'=(m'_0,\ldots, m'_n)=(m_0,m_1 \ldots, m_{n-1}, 2m_{n-1})$.    It is also proto-characteristic: Properties 1-3 of the Definition \ref{PrCharSeq} can be straightforwardly  seen, and for Property 4 we can define functions $t'_1(k)$ and $t'_2(k)$ to be equal to $t_1(k)$ and $t_2(k)$, respectively, for $k <n$ and $t'_1(n)=t'_2(n)=n-1$. 

	By Theorem \ref{th_ex} there exists a unital algebra of dimension $n+1$ and length $m'_n = 2m_{n-1} = 2l$.
\end{proof}

\begin{proposition}\label{pr_plusone} Let $\A$ be a unital algebra over a field $\F$ such that $\dim \A = n$ and $l(\A) = l$. Then there exists a unital algebra $\A'$ such  that $\dim \A' = n+1$ and $l(\A')=l+1$.
\end{proposition}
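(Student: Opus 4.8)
The plan is to follow the template of Propositions~\ref{pr_mon} and~\ref{pr_double}: start from an optimal generating set of $\A$, pass to its characteristic sequence, append a suitable new term, and recover the desired algebra from Theorem~\ref{th_ex}. Concretely, I would choose a generating set $\SS$ of $\A$ with $l(\SS)=l(\A)=l$ and let $M=(m_0,\ldots,m_{n-1})$ be its characteristic sequence; by Lemma~\ref{N=n-1} we have $m_{n-1}=l$, and by Proposition~\ref{pr_full} the sequence $M$ is proto-characteristic, say with associated functions $t_1,t_2$. The new term to append is $l+1$, so I would consider $M'=(m'_0,\ldots,m'_n)=(m_0,m_1,\ldots,m_{n-1},m_{n-1}+1)$.

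Next I would verify that $M'$ is proto-characteristic. Properties 1--3 of Definition~\ref{PrCharSeq} are immediate: the prefix is unchanged, so $m'_0=0$ and the initial block of $1$'s is the same, while monotonicity is preserved because $m'_n=m_{n-1}+1>m_{n-1}=m'_{n-1}$. For Property 4 I would keep the old decompositions, setting $t'_i(k)=t_i(k)$ for every inherited index $k$ with value exceeding $1$, and define the decomposition of the new term by $t'_1(n)=1$ and $t'_2(n)=n-1$. Since $m'_1=1$ and $m'_{n-1}=m_{n-1}=l$, this gives $m'_{t'_1(n)}+m'_{t'_2(n)}=1+l=m'_n$, with $t'_1(n),t'_2(n)<n$, as required by 4a.

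The one step that needs genuine care is Property 4b for pairs $h_1<h_2$ in which $h_2=n$ is the new index (pairs with $h_2<n$ are inherited verbatim from $M$). Here $t'_1(h_1)=t_1(h_1)\ge 1=t'_1(n)$, so the first inequality of 4b cannot help, and I would instead argue through the second coordinate: for any inherited index $h_1$ with $k_1<h_1<n$ one has $t'_2(h_1)=t_2(h_1)<h_1\le n-1=t'_2(n)$, whence $t'_2(h_1)<t'_2(n)$ and 4b holds. I would also record that $m'_1=1$ is genuinely available, since a unital algebra of dimension $n\ge 2$ forces $\dim\L_1(\SS)\ge 2$, hence $k_1\ge 1$ and at least one entry $1$ in $M$. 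With $M'$ shown to be proto-characteristic, Theorem~\ref{th_ex} yields a unital algebra $\A'$ of dimension $n+1$ whose length equals $m'_n=l+1$, completing the argument. The main obstacle is thus purely the bookkeeping in 4b; everything else parallels the two preceding propositions.
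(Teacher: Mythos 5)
Your proposal is correct and follows essentially the same route as the paper: append $m_{n-1}+1$ to the characteristic sequence of an optimal generating set, set $t'_1(n)=1$, $t'_2(n)=n-1$, and invoke Theorem~\ref{th_ex}. The only difference is that you spell out the verification of Property~4b (via $t_2(h_1)<h_1\le n-1=t'_2(n)$), which the paper leaves implicit.
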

\begin{proof}
	Let $\SS$ be generating set of $\A$ with maximal length, $M=(m_0, \ldots, m_{n-1})$ be its generating sequence. By Lemma \ref{N=n-1} holds $m_{n-1} = l$. 
	
	Note that by Proposition \ref{pr_full} $M$ satisfies Properties 1-4 of  Definition \ref{PrCharSeq} with certain functions $t_1$ and $t_2$. 

	Consider the sequence $M'=(m'_0,\ldots, m'_n)=(m_0,m_1 \ldots, m_{n-1}, m_{n-1}+1)$.  It is also proto-characteristic: Properties 1-3 of the Definition \ref{PrCharSeq} can be seen easily, and for Property 4 we can define the functions $t'_1(k)$ and $t'_2(k)$ to be equal $t_1(k)$ and $t_2(k)$, respectively, for $k <n$ and $t'_1(n)=1,t'_2(n)=n-1$. 

	By Theorem \ref{th_ex} there exists a unital algebra of dimension $n+1$ and length $m'_n = 2m_{n-1} = l+1$.
\end{proof}

Let us demonstrate that for a given dimension $n$ there indeed exist algebras with the minimal    possible length values. An example of the algebra of the maximal possible value is given in~\cite[Example 2.8]{GutK18}.

\begin{proposition}\label{pr_onex}
	There exists a unital $\F$-algebra of dimension $n\ge 2$ and length~1.
\end{proposition}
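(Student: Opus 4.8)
The plan is to produce one explicit algebra in which the multiplication is so degenerate that no generating set can create new elements beyond the first step. Concretely, I would take $\A = \F \oplus V$, the unital hull of an $(n-1)$-dimensional vector space $V = \langle e_1, \ldots, e_{n-1}\rangle$ carrying the zero multiplication: in the basis $\{e_0 = 1, e_1, \ldots, e_{n-1}\}$ the unit is $e_0$ and $e_i e_j = 0$ for all $i, j \ge 1$. This algebra is commutative, associative, unital, and of dimension $n$, and the crucial structural feature is that $VV = 0$.

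The core of the argument is the claim that $\L_1(\SS) = \A$ for every generating set $\SS$, which I would deduce by showing that $\L_1(\SS)$ is already a subalgebra. Writing each element of $\L_1(\SS) = \langle 1\rangle + \langle \SS\rangle$ in the form $p = \gamma \cdot 1 + x$ with $\gamma \in \F$ and $x = p - \gamma\cdot 1 \in V \cap \L_1(\SS)$, the product of $p = \gamma\cdot 1 + x$ and $q = \delta\cdot 1 + y$ equals $pq = \gamma\delta\cdot 1 + \gamma y + \delta x$, since $xy \in VV = 0$; because $1, x, y \in \L_1(\SS)$, this product again lies in $\L_1(\SS)$. Hence $\L_1(\SS)$ is a unital subalgebra containing $\SS$, so it contains the subalgebra generated by $\SS$, namely all of $\A$.

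To conclude I would note that $n \ge 2$ gives $\L_0(\SS) = \langle 1\rangle = \F \subsetneq \A$, so $l(\SS) \ge 1$, while the previous paragraph yields $\L_1(\SS) = \A$ and hence $l(\SS) = 1$ for every generating set (and one exists, e.g. $\SS = \{e_1, \ldots, e_{n-1}\}$). Taking the maximum over generating sets gives $l(\A) = 1$. The single step that carries the weight is the closure of $\L_1(\SS)$ under multiplication; everything else is bookkeeping, and it is precisely the vanishing of products inside $V$ that makes it work.

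Alternatively, staying closer to the machinery used in the preceding propositions, I would invoke the constant proto-characteristic sequence $M = (0, 1, 1, \ldots, 1)$ with $n$ entries: here $k_1 = n-1$, so Property 4 of Definition \ref{PrCharSeq} is vacuous and Properties 1--3 are immediate, and Theorem \ref{th_ex} then delivers a unital algebra of dimension $n$ with $l(\A) = m_{n-1} = 1$ in one stroke.
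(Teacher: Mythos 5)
Your proof is correct, but your primary argument takes a genuinely different route from the paper's. The paper disposes of this proposition in one line: the sequence $(0,1,\ldots,1)$ is proto-characteristic, so Theorem \ref{th_ex} supplies the algebra --- this is precisely your closing ``alternative'' paragraph, down to the observation that $k_1=n-1$ renders Property 4 of Definition \ref{PrCharSeq} vacuous. Your main argument instead exhibits the witness explicitly: the unital hull $\F\oplus V$ of a zero-multiplication space, with the closure computation $pq=\gamma\delta\cdot 1+\gamma y+\delta x\in\L_1(\SS)$ showing that $\L_1(\SS)$ is a unital subalgebra, hence equals $\A$, for \emph{every} generating set $\SS$. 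The paper actually records this same algebra immediately after the proposition as Example \ref{ex2_1}, but justifies it there only by the remark that $uv\in\langle u,v\rangle$ for all $u,v$, which as literally stated is slightly off (for $u=1+e_1$, $v=1+e_2$ one only has $uv\in\langle 1,u,v\rangle$); your computation is the careful version of that remark, and the conclusion is unaffected since $1\in\L_k(\SS)$ always. As for what each route buys: the paper's is instantaneous but leans on the heavy machinery behind Theorem \ref{th_ex}, whereas yours is elementary and self-contained, produces a concrete algebra, and proves the stronger fact that every generating set of that algebra has length exactly $1$, not merely that the maximum over generating sets is $1$.
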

\begin{proof}
	Consider the following sequence $(m_0, \ldots, m_{n-1}) =(0,1,\ldots,1)$. It falls under criteria of Theorem \ref{th_ex}, thus the statement follows.
\end{proof}

To construct a concrete example we consider the following algebra.

\begin{example}\label{ex2_1}
	For every natural $n \ge 2$ let us consider an arbitrary field $\F$ and $\F$-algebra $\A$  with the  basis \{$e_0=1 , e_1, \ldots, e_{n-1}$\} and the multiplication determined by the rule: for all $p,q$: $1 \leq p,q \leq n-1$
	\[e_p e_q = 0.\]
	
	The length of this algebra is 1, as for all $u,v \in \A$ it holds that $u v \in \langle u, v \rangle$, i.e. there can be no irreducible words of length 2 or greater.
\end{example}

Observe that there are no gaps in feasible length values for small dimensions, namely $\dim \A\in \{1,2,3,4\}$.   

\begin{proposition}\label{pr_base}
	For algebras of dimension $n=2,3,4$ each value of length between 1 and $2^{n-2}$ is attainable.
\end{proposition}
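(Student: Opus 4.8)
The plan is to invoke Theorem \ref{th_ex}, which reduces the assertion to a purely combinatorial one: to realize an integer $l$ as the length of some unital $\F$-algebra of dimension $n$, it suffices to exhibit a single proto-characteristic sequence $(m_0,\ldots,m_{n-1})$ with $m_{n-1}=l$. Because $n\in\{2,3,4\}$, the target range $1\le l\le 2^{n-2}$ contains at most four values, so I would simply write down an explicit sequence for each admissible pair $(n,l)$ and then check that it satisfies Definition \ref{PrCharSeq}.

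Concretely, for $n=2$ the only value is $l=1$, realized by $(0,1)$. For $n=3$ I would take $(0,1,1)$ for $l=1$ and $(0,1,2)$ for $l=2$. For $n=4$ I would use $(0,1,1,1)$ for $l=1$, $(0,1,1,2)$ for $l=2$, $(0,1,2,3)$ for $l=3$, and $(0,1,2,4)$ for $l=4$. In every case $m_0=0$, the initial block of ones, and the non-decreasing property (Properties 1--3) are immediate by inspection.

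The only part requiring genuine verification is Property 4, i.e.\ choosing the decomposition functions $t_1,t_2$ so that each term exceeding $1$ is the sum of two strictly earlier terms while respecting the monotonicity condition (b). This is the main, though still routine, step. For instance, in the sequence $(0,1,2,3)$ one sets $t_1(2)=t_2(2)=1$ and $t_1(3)=1,\ t_2(3)=2$, so that $m_2=1+1$ and $m_3=1+2$; condition (b) for the pair $h_1=2,\,h_2=3$ holds since $t_2(2)=1<2=t_2(3)$. Similarly for $(0,1,2,4)$ one takes $t_1(3)=t_2(3)=2$, giving $m_3=2+2$ and the strict inequality $t_1(2)=1<2=t_1(3)$. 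The remaining sequences are even simpler, since their terms exceeding $1$ all equal $2=1+1$.

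Alternatively, the same conclusion can be bootstrapped from the already established Propositions \ref{pr_onex}, \ref{pr_mon}, \ref{pr_double} and \ref{pr_plusone}: starting from the dimension-$2$, length-$1$ algebra given by Proposition \ref{pr_onex}, doubling (Proposition \ref{pr_double}) produces dimension $3$ and length $2$; applying addition-by-one (Proposition \ref{pr_plusone}) to the dimension-$3$, length-$2$ algebra yields dimension $4$ and length $3$, doubling the same algebra gives length $4$, and doubling the dimension-$3$, length-$1$ algebra gives dimension $4$ and length $2$; the length-$1$ cases in every dimension come directly from Proposition \ref{pr_onex}. I expect the explicit-sequence version to be cleaner to present, with the verification of Property 4 for the four-term sequences being the only place where a small amount of care is needed.
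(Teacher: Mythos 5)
Your proposal is correct and takes essentially the same route as the paper: the paper also realizes the nontrivial values (lengths $2$ and $3$ in dimension $4$) by feeding the very same proto-characteristic sequences $(0,1,1,2)$ and $(0,1,2,3)$ into Theorem \ref{th_ex}, and your verification of Property 4 of Definition \ref{PrCharSeq} for each sequence is accurate. The only difference is cosmetic: for the extreme values the paper cites ready-made algebras (Example \ref{ex2_1} for length $1$ and \cite[Example 2.8]{GutK18} for the maximal length $2^{n-2}$) instead of writing out the sequences $(0,1,\ldots,1)$ and $(0,1,2,4)$, exactly as your bootstrap alternative would also legitimately do via Propositions \ref{pr_onex}, \ref{pr_double} and \ref{pr_plusone}.
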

\begin{proof}
	1. Let $n=2$. Then the statement is straightforward as $2^{n-2}=1$.
	
	2. Let $n=3$. Then the statement follows from Example  \ref{ex2_1} and~\cite[Example 2.8]{GutK18}.
	
	3.  In the case $n=4$ the values that require justification are only 2 and 3, since 1 can be attained  by Example \ref{ex2_1} and 4 can be attained  by~\cite[Example 2.8]{GutK18}.
	
	3.1. $(0,1,1,2)$ falls under Theorem~\ref{th_ex}.
	
	3.2. $(0,1,2,3)$ falls under Theorem~\ref{th_ex} as well.
\end{proof}

It is shown in \cite{GutK18} that  for each $n\ge 5$ the gaps in the set of values of the length function do exist. To investigate their structure we need the following technical statements.

\begin{proposition}\label{pr_powertwo} Let $ n\ge 2 $, $h \le n-2$ be  integers. Then there exists   a unital $\F$-algebra $\A$ with $\dim \A = n$ and $l(\A) =  2^h$.
\end{proposition}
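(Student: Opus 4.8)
The plan is to reduce the statement to the existence of a suitable proto-characteristic sequence and then invoke Theorem~\ref{th_ex}. Recall that Theorem~\ref{th_ex} produces, from any proto-characteristic sequence $M=(m_0,\ldots,m_{n-1})$, a unital algebra of dimension $n$ whose length equals the last term $m_{n-1}$. Hence it suffices to exhibit a proto-characteristic sequence with exactly $n$ terms whose final term is $2^h$. Since $h\le n-2$, the value $2^h$ respects the bound $m_{n-1}\le 2^{n-2}$ coming from Proposition~\ref{prop_bound}, so there is no numerical obstruction to such a sequence existing.

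Concretely, I would take the sequence obtained by doubling, padded with ones at the front:
$$M=\Bigl(0,\underbrace{1,\ldots,1}_{n-1-h},\,2,\,4,\,\ldots,\,2^h\Bigr),$$
so that $m_0=0$, $m_j=1$ for $1\le j\le n-1-h$, and $m_{\,n-1-h+i}=2^{i}$ for $1\le i\le h$. A direct count shows $M$ has $n$ entries and $m_{n-1}=2^h$. The decisive design choice is to place the padding of ones at the \emph{front} rather than extending a constant block of $2^h$ at the back: a constant tail of length at least two cannot satisfy the monotonicity requirement of Property~4b, because among the values occurring in $M$ the integer $2^h$ admits the single decomposition $2^{h}=2^{h-1}+2^{h-1}$ with both summand indices in $\{1,\ldots,n-1\}$, which would force $t_1$ and $t_2$ to be constant along that tail. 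Placing the doubling block at the very end avoids this entirely, since the indices carrying the padded ones lie outside the domain $\{k_1+1,\ldots,n-1\}$ of $t_1,t_2$. Equivalently, one may build the base sequence $(0,1,2,\ldots,2^h)$ in dimension $h+2$ and raise the dimension to $n$ by applying Proposition~\ref{pr_mon} repeatedly, as that construction inserts exactly these leading ones.

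It remains to verify that $M$ is proto-characteristic in the sense of Definition~\ref{PrCharSeq}, and I expect Property~4b to be the only genuinely delicate point. Properties 1--3 are immediate: $m_0=0$, the initial ones give $k_1=n-1-h$ with $m_{k_1+1}=2>1$ whenever $h\ge 1$, and $M$ is non-decreasing. For Property~4 I would set $t_1(k)=t_2(k)=k-1$ for every $k$ in the domain, which yields $m_{k-1}+m_{k-1}=m_k$ throughout: for the first doubling index this reads $1+1=2$, and for the subsequent ones $2^{i-1}+2^{i-1}=2^{i}$. With this uniform choice Property~4b holds trivially, since $t_1$ is strictly increasing on the whole domain. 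The edge case $h=0$ degenerates to $M=(0,1,\ldots,1)$ with $k_1=n-1$, where Property~4 is vacuous (one may also simply quote Proposition~\ref{pr_onex}). Applying Theorem~\ref{th_ex} to $M$ then produces a unital algebra $\A$ with $\dim\A=n$ and $l(\A)=m_{n-1}=2^h$, as required.
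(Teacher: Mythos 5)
Your proof is correct: the sequence $(0,1,\ldots,1,2,4,\ldots,2^h)$ with $n-1-h$ leading ones is indeed proto-characteristic with $t_1(k)=t_2(k)=k-1$, the verification of Property~4 (including 4b via strict monotonicity of $t_1$) is sound, the edge case $h=0$ is handled, and Theorem~\ref{th_ex} then yields the algebra. However, your route differs in presentation from the paper's. The paper proves this proposition by induction on $n$: the base case and the case $h=0$ are dispatched by Proposition~\ref{pr_onex}, and the inductive step takes an algebra of dimension $N-1$ and length $2^{h-1}$ and applies the doubling construction of Proposition~\ref{pr_double} to get dimension $N$ and length $2^h$. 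That argument never writes down a sequence explicitly and never re-verifies Definition~\ref{PrCharSeq}; all contact with proto-characteristic sequences is outsourced to the already-proved modification propositions, which keeps the proof short and matches the style later reused in Proposition~\ref{pr_digits}. Your proof is the ``unrolled'' version of that induction: the sequence you exhibit is exactly what the paper's recursion implicitly produces (as you yourself observe in the remark about building $(0,1,2,\ldots,2^h)$ in dimension $h+2$ and padding via Proposition~\ref{pr_mon}), but you pay for the directness with an explicit check of Properties~1--4, while gaining a concrete, self-contained witness and a single application of Theorem~\ref{th_ex}. Both arguments are valid; the difference is one of economy versus explicitness.
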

\begin{proof}
	We will prove this statement using an induction on  $n$.

	The base. For $n=2$ the only possible value of $h$ is 0 and the statement holds by Proposition \ref{pr_onex}.

	The step. Assume the statement holds for $n=N-1$, $N \ge 3$, and consider $n = N$.
	
	 The case $h=0$ holds by Proposition \ref{pr_onex}.  For the case $1 \le h \le N-2$ by the induction hypothesis there exists an algebra of dimension $N-1$ and length $2^{h-1}$ as $h-1$ is non-negative and $h-1 \le N-3$. Thus by Proposition \ref{pr_double} there exists an algebra of dimension $N$ and length $2 \cdot 2^{h-1} = 2^{h}$.
\end{proof}

\begin{lemma}\label{lem_doublebound}
Let $\A$ be an $\F$-algebra, $\dim \A =n > 4$. Assume $\SS$ is a generating set for $\A$ and $(m_0, m_1, ..., m_{n-1})$ is the characteristic sequence of $\SS$. Then for all $h$ such that $2 \le h \le n-2$ it holds that  $m_{h+1} \le 2 m_h$.
\end{lemma}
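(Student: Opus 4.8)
The plan is to derive the bound directly from the additive decomposition property of characteristic sequences recorded in Proposition \ref{prop_add}, combined with the fact that the sequence $(m_0,\ldots,m_{n-1})$ is non-decreasing (which holds since, by Proposition \ref{pr_full}, it is proto-characteristic). The key preliminary observation is that the restriction $2 \le h \le n-2$ guarantees $h+1 \le n-1$, so that $m_{h+1}$ is a legitimate term of the characteristic sequence and the decomposition property may be applied to it. Since $n > 4 > 2$, all the cited propositions are available.

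First I would treat the main case $m_{h+1} \ge 2$. Applying Proposition \ref{prop_add} to the index $h+1$, I obtain indices $0 < t_1 \le t_2 < h+1$ with $m_{h+1} = m_{t_1} + m_{t_2}$. Because $t_1, t_2 \le h$ and the sequence is non-decreasing, both summands are bounded by $m_h$, that is $m_{t_1} \le m_h$ and $m_{t_2} \le m_h$. Adding these two inequalities yields $m_{h+1} = m_{t_1} + m_{t_2} \le 2 m_h$, which is exactly the claim.

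It remains to dispose of the degenerate case $m_{h+1} \le 1$, in which Proposition \ref{prop_add} does not apply. Here I would invoke that $m_1 = 1$ (from the construction of the characteristic sequence, as $\dim \L_1(\SS) \ge 2$) together with monotonicity to conclude $m_h \ge 1$ for every $h \ge 1$; consequently $m_{h+1} \le 1 \le 2 m_h$ and the inequality is trivial.

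I do not expect a genuine obstacle: once Proposition \ref{prop_add} is in hand, the estimate is essentially a one-line consequence of monotonicity. The only points requiring minor care are the index bookkeeping—verifying that $h+1$ does not exceed $n-1$ so that $m_{h+1}$ is defined—and isolating the small-value case $m_{h+1} < 2$, where the additive decomposition is vacuous and must be handled by hand.
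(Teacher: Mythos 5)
Your proposal is correct and follows essentially the same route as the paper: apply Proposition \ref{prop_add} to obtain a decomposition into two earlier terms, bound both by monotonicity, and handle the small-value case ($m \le 1$) separately via $m_1 = 1$. If anything, your version is slightly cleaner, since the paper's proof performs the case analysis on $m_h$ and actually derives $m_h \le 2m_{h-1}$ (an index shift relative to the stated claim), whereas you apply the decomposition directly to $m_{h+1}$, consistently with the statement.
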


\begin{proof}
If $m_h = 1$ this follows from the fact that $m_{h-1} \ge 1$ .

If $m_h \ge 2$, we have $m_h=m_i+m_j$ for some $i,j \le h-1$ by Proposition \ref{prop_add}. Since the characteristic sequence is monotonically non-decreasing, we have $m_h = m_i + m_j \le m_ {h-1} + m_{h-1} = 2 m_{h-1}$.
\end{proof}

\begin{proposition}\label{pr_2}
	Let $\A$ be a unital $\F$-algebra, $\dim \A = n$, $n>4$, $\SS$ be
	a generating set of $\A$, $(m_0, m_1,\ldots, m_{n-1})$ be the characteristic sequence of $\SS$. Then 
	
	\begin{enumerate}

	\item For each  positive integer  $h \leq n-1$ it holds that either $m_h = 2^{h-1}$
	or $m_h\leq  3 \cdot 2^{h-3}.$ 
	
	\item Moreover, in the case $m_h = 2^{h-1}$ it holds that $m_l=2^{l-1}$ for all $l\le h$.
	
	\end{enumerate}
\end{proposition}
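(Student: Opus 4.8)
The plan is to prove the two parts in the opposite order to which they are stated: part~2 is a rigidity statement that serves as the key input for part~1, so I would establish it first and unconditionally.

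For part~2, I would run the elementary doubling bound backwards. Observe that $m_{l+1}\le 2m_l$ holds for every $1\le l\le n-2$: for $l\ge 2$ this is exactly Lemma~\ref{lem_doublebound}, and for $l=1$ it follows from $m_1=1$ together with $m_2\le 2^1=2$ given by Proposition~\ref{prop_bound}. Iterating this inequality from index $l$ up to index $h$ yields $m_h\le 2^{\,h-l}m_l$. If $m_h=2^{h-1}$, this forces $m_l\ge 2^{l-1}$, while Proposition~\ref{prop_bound} gives $m_l\le 2^{l-1}$; hence $m_l=2^{l-1}$ for every $l\le h$, which is part~2.

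For part~1 I would argue by induction on $h$. The cases $h\le 3$ are immediate: $m_1=1=2^0$; for $h=2$ either $m_2=2=2^{1}$ or $m_2=1\le 3\cdot 2^{-1}$; and for $h=3$ every integer $m_3\le 2^{2}=4$ is either equal to $4$ or at most $3=3\cdot 2^{0}$. For the inductive step with $h\ge 4$, if $m_h=1$ then trivially $m_h\le 3\cdot 2^{h-3}$, so I may assume $m_h\ge 2$ and apply Proposition~\ref{prop_add} to write $m_h=m_{t_1}+m_{t_2}$ with $t_1\le t_2\le h-1$. I would then split on the value of $m_{h-1}$. If $m_{h-1}<2^{h-2}$, the induction hypothesis at index $h-1$ gives $m_{h-1}\le 3\cdot 2^{h-4}$, and since both summands are at most $m_{h-1}$ by monotonicity we get $m_h\le 2m_{h-1}\le 3\cdot 2^{h-3}$ directly. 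If instead $m_{h-1}=2^{h-2}$, then part~2 applied at index $h-1$ tells me that $m_t=2^{t-1}$ for all $t\le h-1$, so $m_h=2^{t_1-1}+2^{t_2-1}$; the only way this sum exceeds $3\cdot 2^{h-3}$ is to take $t_1=t_2=h-1$, which gives exactly $m_h=2^{h-1}$, while in every other case $m_h\le 2^{h-2}+2^{h-3}=3\cdot 2^{h-3}$.

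The main obstacle is precisely this second case of the inductive step, and it is there that the gap in admissible values is forced: once part~2 pins every earlier term down to the exact power $2^{t-1}$, the additive decomposition of Proposition~\ref{prop_add} can only produce sums of two such powers, and the arithmetic of these sums leaves no value strictly between $3\cdot 2^{h-3}$ and $2^{h-1}$. I would take care to check that the backward doubling chain used in part~2 stays within the range $2\le l\le n-2$ where Lemma~\ref{lem_doublebound} applies, treating the endpoint $l=1$ by hand as above; the hypothesis $n>4$ is exactly what guarantees that this lemma is available throughout.
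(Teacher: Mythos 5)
Your proof is correct, but it is organized genuinely differently from the paper's. The paper does not induct on $h$: it locates the \emph{first} index $k$ at which the doubling relation $m_k = 2m_{k-1}$ fails, notes that all terms before $k$ are then exact powers of $2$, bounds $m_k \le 3\cdot 2^{k-3}$ at that index via Proposition \ref{prop_add} (the two summands cannot both equal $m_{k-1}$, precisely by the choice of $k$, so they are at most $m_{k-1}+m_{k-2}=2^{k-2}+2^{k-3}$), and then propagates this deficit forward with Lemma \ref{lem_doublebound} to obtain $m_h \le 2^{h-k} m_k \le 3\cdot 2^{h-3} < 2^{h-1}$; Item 2 then falls out because $m_h = 2^{h-1}$ is only possible in the branch where doubling never fails, and in that branch every $m_l$ equals $2^{l-1}$. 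You instead prove Item 2 first and unconditionally, by a sandwich argument: backward iteration of the doubling bound forces $m_l \ge 2^{l-1}$ whenever $m_h = 2^{h-1}$, while Proposition \ref{prop_bound} caps $m_l$ at $2^{l-1}$. You then run an induction on $h$ for Item 1, splitting on whether $m_{h-1}$ is extremal, and in the extremal case you use the rigidity of Item 2 to reduce everything to sums of two powers of $2$. Both routes rest on the same three ingredients (Lemma \ref{lem_doublebound}, Proposition \ref{prop_add}, Proposition \ref{prop_bound}), and your bookkeeping at the boundary indices ($l=1$ handled by hand, base cases $h\le 3$, exhaustiveness of the case split via Proposition \ref{prop_bound}) is sound. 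What your order of proof buys is a self-contained and reusable proof of Item 2 — in the paper its validity is entangled with the strict inequality established in the \emph{otherwise} branch, whereas yours stands alone (and Item 2 is indeed what gets cited later, e.g.\ in Proposition \ref{prop_powtwo}); what the paper's first-failure argument buys is brevity, since it dispenses with the induction and the case analysis entirely.
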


\begin{proof}
Note that $m_1=1$. If 
\begin{equation} m_k = 2  m_{k-1}\label{m_k} \end{equation} for all 
$2 \leq k \leq h$, then $m_h=2^{h-1}$ and Item 2 is straightforward.
	
Otherwise consider the first $k$, $2\le k \le h$ such that $m_{k} \ne 2  m_{k-1}$, i.e. by Lemma \ref{lem_doublebound} $m_{k} < 2  m_{k-1}$.  

1. If $m_k=1$ by Lemma \ref{lem_doublebound} it holds $m_h \le 2 m_{h-1} \le \ldots \le 2^{h-k} m_k = 2^{h-k} \le 3 \cdot 2^{h-3}  < 2^{h-1}$.

2. If $m_k \ge 2$ we have $m_h=m_i+m_j$ for some $1 \le i,j \le k-1$ by Proposition \ref{prop_add}. Note that $i,j$ cannot be equal to $k-1$ simultaneously due to the choice of $k$. Thus $m_k = m_i + m_j \le m_{k-1} + m_{k-2} = 2^{k-2} + 2^{k-3} = 3 \cdot 2^{k-3}.$

By Lemma \ref{lem_doublebound} it holds $m_h \le 2 m_{h-1} \le \ldots \le 2^{h-k} m_k \le  2^{h-k} ( 3 \cdot 2^{k-3}) = 3 \cdot 2^{h-3}  < 2^{h-1}$.
\end{proof}

\section{Binary decomposition}

\subsection{Gap bounds}

In this subsection we further investigate the set of realizable values of the length function. In particular, we use here a new method to determine whether certain value is a feasible length of an algebra of dimension $n$. This method is  based on binary decomposition of~$n$.
	
The theorem below generalizes the results of Proposition~\ref{pr_powertwo}.

\begin{theorem}\label{th_bintwo} Let $n>4$ be integer. Then for each
$p\in \{0,\ldots,n-3\}$ there exists a unital $\F$-algebra  $\A$   with $\dim \A = n $ and  $l(\A)=2^{n-3}+2^{p}$.
\end{theorem}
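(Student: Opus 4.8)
The plan is to reduce the target length to a small ``seed'' value and then amplify it by repeated doubling, exploiting the factorization
$$2^{n-3}+2^{p}=2^{p}\bigl(2^{(n-p)-3}+1\bigr).$$
I would fix $n>4$ and $p\in\{0,\ldots,n-3\}$ and put $d=n-p$, so that $3\le d\le n$. The factorization then dictates the whole construction: first build a unital algebra of dimension $d$ whose length is the seed value $2^{d-3}+1$, and afterwards apply Proposition~\ref{pr_double} exactly $p$ times.

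For the seed I would produce a unital algebra of dimension $d$ and length $2^{d-3}+1$ as follows. When $d\ge 5$, the inequalities $0\le d-3\le (d-1)-2$ let Proposition~\ref{pr_powertwo} furnish a unital algebra of dimension $d-1$ and length $2^{d-3}$; a single application of Proposition~\ref{pr_plusone} then raises the length to $2^{d-3}+1$ in dimension $d$, and here the source dimension $d-1\ge 4$ is safely larger than $2$. For the two boundary values $d=3$ and $d=4$ the seed lengths are $2$ and $3$, which are already provided in dimensions $3$ and $4$ by Proposition~\ref{pr_base}; invoking this avoids applying the lifting propositions to a dimension-$2$ algebra.

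Finally, applying Proposition~\ref{pr_double} to the seed $p$ times raises the dimension from $d$ to $d+p=n$ and multiplies the length by $2^{p}$, giving
$$2^{p}\bigl(2^{d-3}+1\bigr)=2^{(d+p)-3}+2^{p}=2^{n-3}+2^{p},$$
which is exactly the claimed value. Since every intermediate dimension occurring in the doublings is at least $d\ge 3>2$, all invocations of the lifting propositions are legitimate.

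This argument is essentially an assembly of the length-modifying operations already established, so I expect no genuinely hard step; the only real care needed is the index bookkeeping. Concretely, I would check explicitly that the hypothesis of Proposition~\ref{pr_powertwo} holds at the boundary (where it degenerates to the equality $d-3=(d-1)-2$), that the seed construction never lifts a dimension-$2$ algebra, and that the starting dimension $d=n-p$ together with the $p$ doublings always lands at dimension exactly $n$ — all of which follow immediately from $d=n-p$ and $0\le p\le n-3$.
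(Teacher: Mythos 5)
Your proposal is correct, but it takes a genuinely different route from the paper. The paper's proof is a one-step direct construction: it exhibits the proto-characteristic sequence $(0,1,2,4,\ldots,2^{n-3},\,2^{n-3}+2^{p})$, specifies the witnessing functions $t_1(k)=t_2(k)=k-1$ for $k=2,\ldots,n-2$ together with $t_1(n-1)=p+1$, $t_2(n-1)=n-2$, and invokes Theorem~\ref{th_ex} once. You instead factor the target as $2^{p}\bigl(2^{(n-p)-3}+1\bigr)$, build a seed algebra of dimension $d=n-p$ and length $2^{d-3}+1$ (via Proposition~\ref{pr_powertwo} plus Proposition~\ref{pr_plusone} for $d\ge 5$, and Proposition~\ref{pr_base} for the boundary cases $d=3,4$), and then apply Proposition~\ref{pr_double} exactly $p$ times. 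All the bookkeeping checks out: $3\le d\le n$, the hypothesis $h=d-3\le (d-1)-2$ of Proposition~\ref{pr_powertwo} holds (with equality), no lifting proposition is ever applied below dimension $3$, the cases $p=0$ and $p=n-3$ come out right, and none of the propositions you cite depends on Theorem~\ref{th_bintwo}, so there is no circularity. What the paper's approach buys is explicitness — the characteristic sequence of the resulting algebra is written down in full, which is then reused in the converse result (Theorem~\ref{th_bintwo1}) and in the classification of maximal-length algebras. What your approach buys is modularity: you never have to verify the proto-characteristic axioms (in particular the monotonicity condition 4b on $t_1,t_2$) by hand, reusing instead the already-proved lifting propositions as black boxes. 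It is worth noting that your amplification technique is essentially the one the paper itself deploys later, in the double induction proving the more general Proposition~\ref{pr_digits}, so your argument can be read as deriving Theorem~\ref{th_bintwo} by the method of that generalization rather than by the paper's direct construction.
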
	
\begin{proof}
Consider the sequence $$(0,1,2,\ldots,2^{n-3}, 2^{n-3}+2^{p}).$$ It is easy to see that it is proto-characteristic with functions $t_1(k)=t_2(k)=k-1$ for $k=2,\ldots,n-2$, and $t_1(n-1)=p+1,$ $ t_2(n-1)=n-2$. Thus by Theorem \ref{th_ex} there exists an $\F$-algebra  $\A$   with $\dim \A = n$ and $l(\A)=2^{n-3}+2^{p}$.
\end{proof}

The converse statement is also true and   provides a generalization for Proposition~\ref{pr_2}.

\begin{theorem}\label{th_bintwo1}
Let $\A$ be a unital $\F$-algebra, $\dim \A = n$, $n>4$, $\SS$ be a generating set for $\A$, $(m_0, m_1,\ldots, m_{n-1})$ be the characteristic sequence of $\SS$. Let $m_h > 2^{h-2}$, where $h \in \{4, \ldots, n-1\}$. Then $m_h=2^{h-2}+2^{q}$, $q \in \{0,\ldots,h-2\}$. 
\end{theorem}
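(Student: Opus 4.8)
The plan is to prove, by strong induction on $h\ge 2$, the (slightly more general) statement that $m_h>2^{h-2}$ implies $m_h=2^{h-2}+2^q$ for some $0\le q\le h-2$; this contains the theorem, whose range is $h\ge 4$. The cases $h\in\{2,3\}$ are the base and are settled by inspection: Proposition~\ref{prop_bound} gives $m_2\in\{1,2\}$ and $m_3\in\{1,2,3,4\}$, and the values exceeding $2^{h-2}$ are exactly $m_2=2=2^0+2^0$ and $m_3\in\{3,4\}$, i.e. $2^1+2^0$ or $2^1+2^1$, all of the required shape. For the inductive step I fix $h\ge 4$. Since $m_h>2^{h-2}\ge 2$, Proposition~\ref{prop_add} yields $m_h=m_i+m_j$ with $1\le i\le j\le h-1$. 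The first observation I would record is that $j=h-1$: from $m_i\le m_j$ we get $m_j\ge m_h/2>2^{h-3}$, while $m_j\le 2^{j-1}$ by Proposition~\ref{prop_bound}, so $2^{j-1}>2^{h-3}$ forces $j\ge h-1$, and $j\le h-1$ gives equality.

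Next I would split on the size of $m_{h-1}$. If $m_{h-1}$ is maximal, $m_{h-1}=2^{h-2}$, then the second part of Proposition~\ref{pr_2} makes every earlier term a power of two, in particular $m_i=2^{i-1}$, so $m_h=2^{h-2}+2^{i-1}$ and we are done. Otherwise $m_{h-1}<2^{h-2}$; then the first part of Proposition~\ref{pr_2} forces $m_{h-1}\le 3\cdot 2^{h-4}$, and combined with the lower bound $m_{h-1}=m_j>2^{h-3}$ found above, the inductive hypothesis applies at index $h-1$ and gives $m_{h-1}=2^{h-3}+2^{q'}$ with $q'\le h-4$.

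In this second case one computes $m_i=m_h-m_{h-1}>2^{h-2}-3\cdot 2^{h-4}=2^{h-4}$, and together with $m_i\le 2^{i-1}$ (Proposition~\ref{prop_bound}) this forces $i\in\{h-2,h-1\}$. If $i=h-1$ then $m_h=2m_{h-1}=2^{h-2}+2^{q'+1}$, as required. If $i=h-2$ I would split once more, using $m_{h-2}\le 2^{h-3}$: when $m_{h-2}=2^{h-3}$ one gets immediately $m_h=2^{h-3}+m_{h-1}=2^{h-2}+2^{q'}$; when $m_{h-2}<2^{h-3}$ the inductive hypothesis at index $h-2$ gives $m_{h-2}=2^{h-4}+2^{q''}$ with $q''\le h-5$, so that $m_h=2^{h-3}+2^{h-4}+2^{q'}+2^{q''}$. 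A magnitude count finishes it: if $q'\le h-5$ then $2^{q'}+2^{q''}\le 2^{h-4}$ and hence $m_h\le 2^{h-2}$, contradicting $m_h>2^{h-2}$; therefore $q'=h-4$, whence $m_{h-1}=3\cdot 2^{h-4}$ and $m_h=2^{h-2}+2^{q''}$. In every branch $q\in\{i-1,q',q'+1,q''\}$ is at most $h-2$, which matches the claim.

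The routine part is the repeated use of Propositions~\ref{prop_add}, \ref{prop_bound} and \ref{pr_2} to pin indices and ranges. I expect the \emph{main obstacle} to be the last subcase $i=h-2$ with $m_{h-1}<2^{h-2}$: here the conclusion depends on applying the inductive hypothesis \emph{simultaneously} to $m_{h-1}$ and $m_{h-2}$ and then arguing that the only way the resulting four-term sum can exceed $2^{h-2}$ is to have $q'=h-4$, which collapses the excess $m_h-2^{h-2}$ into a single power of two. Some care is needed with the smallest instances (around $h=4,5$), where the auxiliary indices $h-2\in\{2,3\}$ are exactly those covered by the base of the induction.
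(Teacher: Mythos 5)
Your proof is correct and takes essentially the same route as the paper's: induction on $h$, the additive decomposition $m_h=m_i+m_j$ from Proposition~\ref{prop_add}, forcing the larger index to equal $h-1$ via Proposition~\ref{prop_bound}, and a case split on whether $m_{h-1}=2^{h-2}$ using Proposition~\ref{pr_2}, with the inductive hypothesis invoked at indices $h-1$ and $h-2$. The differences are only organizational: the paper argues by contradiction (writing $m_H=2^{H-2}+x$ with $2^Q<x<2^{Q+1}$, which costs it an extra parity subcase when $t=H-1$), whereas your direct computation in each branch, together with your extended base $h\in\{2,3\}$ (which covers the paper's implicit appeal to the hypothesis at index $3$ when $H=5$), is a slightly cleaner rendering of the same argument.
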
	
\begin{proof}
We consider  a unital $\F$-algebra $\A$ with $\dim \A = n$, $n>4$, its generating set $\SS$  and the characteristic sequence $(m_0, m_1,\ldots, m_{n-1})$ of $\SS$. 

	Now we will prove that  from $m_h > 2^{h-2}$, where $h \in \{4, \ldots, n-1\}$, it follows that $m_h=2^{h-2}+2^{q}$, $q \in \{0,\ldots,h-2\}$ by induction on $h$.
	
	The base. For $h=4$ according to Proposition \ref{pr_2}, the value $m_4=7$ is impossible. Thus by Proposition \ref{prop_max}, if $m_h> 2^{4-2} =4$, then  $m_h\in \{5 = 2^2 +2^0, 6=2^2+2^1, 8=2^2 +2^2\}$.
	
	The step. If the statement holds for $h=4,\ldots,H-1$, let us prove it for $h=H \ge 5$. Assume the contrary, i.e. that there exists $Q \in\{0,\ldots,H-3\}$  such that $2^{H-2}+2^{Q} < m_H < 2^{H-2}+2^{Q+1}$, or, equivalently $m_H=2^{H-2}+x$, $2^{Q}<x<2^{Q+1}$. As $(m_0,\ldots,m_{n-1})$ is proto-characteristic by Proposition \ref{pr_full}, we can consider respective functions $t_1$ and $t_2$ . By Property 4a of Definition \ref{PrCharSeq}, $m_{H}=m_{t_1(H)}+m_{t_2(H)}$. Let $T=\max \{t_1(H),t_2(H)\}$ and $t=\min \{t_1(H),t_2(H)\}$, so $m_{H}=m_{T}+m_{t}$. Since $(m_0,\ldots,m_{n-1})$ is monotonically non-decreasing, $m_T \ge m_t$, thus $m_T \ge \frac {m_{H}}{2} > 2^{H-3}$. For any $p\in \{0,\ldots,H-2\}$ by  Proposition \ref{prop_bound}  the inequality $m_{p} \le 2^{p-1} \le  2^{H-3}$ is satisfied. Hence $T=H-1$. By assumption, $2^{H-3}+2^{Q-1} <   \frac{m_{H}}{2} \le  m_T$, thus by  induction hypothesis $m_T =m_{H-1} = 2^{H-3} +2^{Q_1}$ for some $Q_1 \in \{0,\ldots,H-3\}$. There are the following two possibilities.
	
	1. $Q_1 = H-3$. Then $m_{H-1}=2^{H-2}$. It follows that $m_{k} =2^{k-1}$ for $p=0,\ldots,H-2$. Since $t\in \{1,\ldots,H-1 \}$, $m_{H}=m_T + m_t = 2^{H-2} +2^{t-1}$, which contradicts the assumption.
		
	2. $Q_1 < H-3$. Then $m_t = m_{H} - m_T = 2^{H-3} - 2^{Q_1} +x \ge 2^{H-4} + x$. By  Proposition \ref{prop_bound} for $p\in \{0,\ldots,H-3\}$ the inequality $m_{p} \le  2^{H-4}$ holds. Hence $t\in \{H-1,H-2\}$. If $t=H-1$, then $m_T = m_t = \frac{m_{H}}{2} = 2^{H-3} +\frac{x}{2}$, which is   impossible if $x$ is odd. Then $x$ is even. By our assumption $\frac{x}{2} \neq 2^q$ for any $q\in \{0,\ldots,H-3\}$, which contradicts the induction hypothesis for $T$.  Therefore, $t=H-2$. Then by the induction hypothesis $m_t =m_{H-2} = 2^{H-3} +2^{Q_2}$ for some $Q_2 \in \{0,\ldots,H-4\}$. Thus, we have the   equalities: \[2^{H-2}+x = m_{H}=m_T+m_t=2^{H-3}+2^{Q_1}+2^{H-4}+2^{Q_2},\] or equivalently \[2^{H-4}+x=2^{Q_1}+2^{Q_2}.\] Again, there are several possibilities.
	
	2.1. $Q_1=H-4$. Then we have $x = 2^{Q_2}$, which contradicts the assumption.
	
	2.2. $Q_2=H-4$. Then we have $x = 2^{Q_1}$, which contradicts the assumption.
	
	2.3. $Q_1,Q_2 <H-4$. Then $2^{Q_1}+2^{Q_2} \le 2 \cdot 2^{H-5} = 2^{H-4} < 2^{H-4} +x$, thus the equality is impossible.
	
	These contradictions conclude the proof.
\end{proof}


Observe that Theorem \ref{th_bintwo} can be generalized even further. 

\begin{proposition}\label{pr_digits}
	Let $n$ and $k$ be integers such that $n >k>1$. Assume that $l$ is a positive integer such that the following two conditions are satisfied
	\begin{enumerate}
		\item $l <2^{n-k}$;
		\item there are no more than $k$ elements 1 in the binary decomposition of~$l$.
	\end{enumerate}
Then there exists an algebra $\A$ of dimension $n$ and length $l$.
\end{proposition}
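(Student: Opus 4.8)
The plan is to realize $l$ as the terminal value of an explicitly built proto-characteristic sequence in the sense of Definition \ref{PrCharSeq}, turn that sequence into an algebra by Theorem \ref{th_ex}, and then inflate the dimension up to $n$ using Proposition \ref{pr_mon}. First I would write $l$ in binary as $l = 2^{a_1} + 2^{a_2} + \cdots + 2^{a_j}$ with $a_1 > a_2 > \cdots > a_j \ge 0$. Condition~2 gives $j \le k$, and Condition~1 forces $2^{a_1} \le l < 2^{n-k}$, whence $a_1 \le n-k-1$; combining the two yields the crucial inequality $a_1 + j \le n-1$, equivalently $a_1 + j + 1 \le n$. The degenerate case $l=1$ (that is, $a_1 = 0$ and $j=1$) is covered directly by Proposition \ref{pr_onex}, so I may assume $a_1 \ge 1$.

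Next I would construct the sequence in two stages. The first stage is the doubling ladder: set $m_0 = 0$ and $m_r = 2^{r-1}$ for $r = 1, \ldots, a_1 + 1$, realized by $t_1(r) = t_2(r) = r-1$ so that $m_r = 2 m_{r-1}$. The second stage appends the partial sums $P_i = 2^{a_1} + \cdots + 2^{a_i}$ at the indices $a_1 + i$ for $i = 2, \ldots, j$, taking $t_1(a_1 + i) = a_1 + i - 1$ (the previous partial sum $P_{i-1} = m_{a_1 + i - 1}$) and $t_2(a_1 + i) = a_i + 1$ (the ladder entry $2^{a_i} = m_{a_i + 1}$). This is legitimate since $a_i < a_1$ forces $a_i + 1 \le a_1 < a_1 + i$, so both pointers stay strictly below the current index and point into the already-built ladder. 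The resulting sequence $(0, 1, 2, 4, \ldots, 2^{a_1}, P_2, \ldots, P_j = l)$ has $a_1 + j + 1$ terms and terminal value $l$.

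Then I would verify that this sequence is proto-characteristic. Properties~1--3 of Definition \ref{PrCharSeq} ($m_0 = 0$, a single leading~$1$ since $m_2 = 2 > 1$, and monotonicity) are immediate, and Property~4a holds by the choices of $t_1, t_2$ above. The one point deserving attention is Property~4b, but it collapses to nothing once one notices that in both stages $t_1(h) = h-1$, so $t_1$ is strictly increasing over its whole domain; hence $t_1(h_1) < t_1(h_2)$ for every $h_1 < h_2$, which is already one of the two alternatives demanded by~4b. Theorem \ref{th_ex} then supplies a unital algebra of dimension $a_1 + j + 1$ and length $l$, and applying Proposition \ref{pr_mon} exactly $n - a_1 - j - 1 \ge 0$ times raises the dimension to $n$ while preserving the length.

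The work is essentially bookkeeping rather than a single hard step: the only things to be careful about are that each auxiliary pointer $t_2(a_1 + i) = a_i + 1$ really indexes an entry of the ladder and lies strictly below $a_1 + i$, and that the two counting hypotheses combine to give $a_1 + j + 1 \le n$, so that the number of padding applications of Proposition \ref{pr_mon} is never negative. Property~4b, which is usually the delicate condition to establish, is free here thanks to the identity $t_1(h) = h-1$.
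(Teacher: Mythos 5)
Your proof is correct, but it takes a genuinely different route from the paper's. The paper proves this proposition by a double induction on $n$ and $k$: the base cases are handled by Proposition \ref{pr_powertwo} and Theorem \ref{th_bintwo}, and the inductive step splits on the parity of $l$, reducing an odd $l$ to $l-1$ in dimension $n-1$ (then applying Proposition \ref{pr_plusone}) and an even $l$ to $l/2$ in dimension $n-1$ (then applying Proposition \ref{pr_double}). You instead build, in one shot, an explicit proto-characteristic sequence tailored to the binary decomposition of $l$ --- a doubling ladder $(0,1,2,\ldots,2^{a_1})$ followed by the partial sums $P_2,\ldots,P_j$ --- and invoke Theorem \ref{th_ex} once, padding up to dimension $n$ with Proposition \ref{pr_mon}. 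I checked the delicate points of your construction and they hold: the pointer $t_2(a_1+i)=a_i+1$ indeed lands in the ladder and satisfies $a_i+1\le a_1<a_1+i$; Property 4b is immediate from $t_1(h)=h-1$ being strictly increasing on the whole domain; and the count $a_1+j+1\le(n-k-1)+k+1=n$ guarantees the number of padding steps is non-negative, with the degenerate case $l=1$ correctly delegated to Proposition \ref{pr_onex}. The trade-off: your argument is more self-contained and makes the role of the binary decomposition completely transparent (it also exhibits the realizing sequence explicitly, in the spirit of Theorem \ref{th_bintwo} and Example \ref{ex_ex}), while the paper's induction is more modular, reusing the already-established increment and doubling machinery instead of re-verifying the axioms of Definition \ref{PrCharSeq} by hand.
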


\begin{proof}
	We will prove this statement using a double induction on $n$ and $k$.
	
	The base for $n$.  If $n =3$ then the only possible values of $k$ and $l$ are $2$ and $1$ respectively and the statement is straightforward.

	The step for $n$. Assume the  statement holds for $n=3,\ldots,N-1$ with $N >3$ and consider $n=N$.

	The base for $k$. If $k=2$ then there are  two possibilities:
	
	1. $l=2^h$, where $0\le h \le N-3$. It is the statement of Proposition \ref{pr_powertwo}.
	
	2. There exist positive integers $h_1,h_2$ such that $h_1<h_2<N-2$ and $l=2^{h_2}+2^{h_1}$. If $h_2=N-3$, then the statement follows directly from Theorem \ref{th_bintwo}. If $h_2<N-3$, then by Theorem \ref{th_bintwo} there exist an algebra of length $l$ and dimension $h_2+3$. It follows by Proposition \ref{pr_mon} that there is an algebra of length $l$ and dimension $N$.
	
	The step for $k$. Assume that we have proven the statement for  $k=2,\ldots,K-1$ where $2<K<N$. Consider $l$ such that $l<2^{N-K}$ and there are no more than $K$ 1s in its binary decomposition. If there are $k_1<K$  1s, the statement follows from the induction hypothesis for $k$, since $l<2^{N-K}<2^{N-k_1}$. If there are exactly $K$ 1s (it should be noted that it means that $K<N-1$ in this case, since if $K=N-1$, then $l$ would be equal to $1$), then we have two possibilities again.
	
	1. $l$ is odd. Note that $l-1$ has $K-1$ elements 1 and $l-1 < l < 2^{N-K}=2^{(N-1)-(K-1)}$. By the induction hypothesis for $n$ there exists an algebra of dimension $N-1$ and length $l-1$. Thus, by Proposition \ref{pr_plusone} there exists an algebra $\A$ of dimension $N$ and length $l$.
	
	2. $l$ is even. Note that $\frac{l}{2}$ has $K$ elements 1 and $ \frac{l}{2}<\frac{2^{N-K}}{2}=2^{(N-1)-K}$. By the induction hypothesis for $n$ there exists an algebra of dimension $N-1$ and length $\frac{l}{2}$. Thus, by Proposition \ref{pr_double} there exists an algebra $\A$ of dimension $N$ and length~$l$.
\end{proof}

The following set of realizable values is also a consequence from Theorem~\ref{th_bintwo}.

\begin{proposition}
	Let $n>4$ be integer. Then for each
	$p\in \{2,\ldots,n-3\}$ and $q\le p$ there exists a unital $\F$-algebra  $\A$   with $\dim \A = n $ and  $l(\A)=2^{p}+2^{q}$.
\end{proposition}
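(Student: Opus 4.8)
The plan is to obtain the desired algebra first in the smallest dimension where the length $2^{p}+2^{q}$ arises directly from Theorem~\ref{th_bintwo}, and then to lift it to dimension $n$ by repeatedly applying the dimension-raising construction of Proposition~\ref{pr_mon}. The key observation is that $2^{p}+2^{q}$ is precisely a value of the form $2^{(p+3)-3}+2^{q}$, so it is realized by Theorem~\ref{th_bintwo} in dimension $p+3$.

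First I would invoke Theorem~\ref{th_bintwo} with the dimension taken to be $p+3$ in place of $n$. Since $p\ge 2$, we have $p+3\ge 5>4$, so the hypothesis $n>4$ of that theorem is satisfied. The theorem then yields a unital $\F$-algebra of dimension $p+3$ and length $2^{(p+3)-3}+2^{q}=2^{p}+2^{q}$, provided that the exponent lies in the admissible range $q\in\{0,\ldots,(p+3)-3\}=\{0,\ldots,p\}$, which is exactly the hypothesis $q\le p$.

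It remains to raise the dimension from $p+3$ up to $n$ without altering the length. Because $p\le n-3$, we have $p+3\le n$. If $p+3=n$ the construction above already gives the required algebra. Otherwise I would apply Proposition~\ref{pr_mon} exactly $n-(p+3)$ times; each application increases the dimension by one while preserving the length $2^{p}+2^{q}$, producing in the end a unital $\F$-algebra of dimension $n$ and length $2^{p}+2^{q}$.

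I do not expect any genuine difficulty here beyond bookkeeping, since the argument is a direct chaining of two already-established results. The only points that require care are the two boundary checks that license the use of Theorem~\ref{th_bintwo} in dimension $p+3$: the inequality $p+3>4$, guaranteed by $p\ge 2$, and the membership $q\in\{0,\ldots,p\}$, guaranteed by $q\le p$. Once these are verified, the iterated application of Proposition~\ref{pr_mon} is entirely routine.
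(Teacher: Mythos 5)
Your proposal is correct and is essentially identical to the paper's own proof: both realize the length $2^{p}+2^{q}$ in dimension $p+3$ via Theorem~\ref{th_bintwo} and then lift to dimension $n$ by applying Proposition~\ref{pr_mon} exactly $n-p-3$ times. Your explicit verification of the boundary conditions ($p+3>4$ and $q\in\{0,\ldots,p\}$) is a welcome addition that the paper leaves implicit.
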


\begin{proof}
	By Theorem \ref{th_bintwo} there exists algebra  $\A$ of dimension $p+3$ and length $2^{p}+2^{q}$. If $p+3 <n$, we can apply Proposition \ref{pr_mon} $n-p -3$ times and get an algebra of dimension $n$ and the same length.
\end{proof}

\begin{proposition}\label{pr_interval}
Let $n,k$ be integers satisfying   $1<k<n$. 

1.  There are at least $1 + \binom{n-k-1}{1} + \ldots + \binom{n-k-1}{\min(k-1, n-k-1)}$   realizable length values  of algebras of dimension $n$ in the interval $[2^{n-k-1}, 2^{n-k} - 1]$. 

2. If  $k > \lceil \frac{n}{2} \rceil$, then all values in the interval $[2^{n-k-1}, 2^{n-k} - 1]$ are realizable.

3. If $k=2$, then there are exactly $1+\binom{n-3}{1} = n-2$ realizable values.

\end{proposition}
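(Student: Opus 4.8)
The plan is to derive Items~1 and~2 purely from the sufficient condition of Proposition~\ref{pr_digits} together with an exact count of integers in a dyadic interval, and to obtain the sharpness in Item~3 by invoking the structural constraint of Theorem~\ref{th_bintwo1}. First I would record the binary structure of the interval: every integer $l$ with $2^{n-k-1}\le l\le 2^{n-k}-1$ has its leading binary digit in position $n-k-1$, while the $n-k-1$ lower positions are unconstrained, and distinct choices of these lower bits give distinct $l$. Thus the number of $1$'s in the binary decomposition of $l$ equals $1+j$, where $j$ counts the $1$'s among the lower bits. Since such $l$ automatically satisfy $l<2^{n-k}$, Proposition~\ref{pr_digits} makes $l$ realizable as a length of a dimension-$n$ algebra whenever $1+j\le k$, i.e. whenever $j\le k-1$.

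For Item~1 I would then count: for each $j$ with $0\le j\le\min(k-1,n-k-1)$ there are exactly $\binom{n-k-1}{j}$ integers of the interval whose lower part carries $j$ ones, and each is realizable by the observation above. Summing over $j$ and using $\binom{n-k-1}{0}=1$ yields the stated lower bound; it is only a lower bound because Proposition~\ref{pr_digits} is sufficient, not necessary. For Item~2 the hypothesis $k>\lceil n/2\rceil$ forces $2k>n$, hence $k-1\ge n-k-1$ and $\min(k-1,n-k-1)=n-k-1$. The sum of Item~1 then collapses to $\sum_{j=0}^{n-k-1}\binom{n-k-1}{j}=2^{n-k-1}$, which equals the total number of integers in $[2^{n-k-1},2^{n-k}-1]$; so the lower bound already exhausts the interval and every value is realizable.

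For Item~3, with $k=2$ the interval is $[2^{n-3},2^{n-2}-1]$ and Item~1 gives the lower bound $1+\binom{n-3}{1}=n-2$, realized by $l=2^{n-3}$ (Proposition~\ref{pr_powertwo}) and by $l=2^{n-3}+2^{q}$, $q\in\{0,\ldots,n-4\}$ (Theorem~\ref{th_bintwo}). The hard part, and the main obstacle, is the matching upper bound, namely ruling out every other value of the interval. For this I would take any realizable $l$ with $2^{n-3}<l\le 2^{n-2}-1$, pick a generating set $\SS$ of a dimension-$n$ algebra $\A$ with $l(\SS)=l(\A)=l$, and note by Lemma~\ref{N=n-1} that its characteristic sequence has $m_{n-1}=l>2^{n-3}=2^{(n-1)-2}$. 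Theorem~\ref{th_bintwo1} with $h=n-1$ then forces $l=2^{n-3}+2^{q}$, and $l<2^{n-2}$ restricts $q$ to $\{0,\ldots,n-4\}$; these values together with $l=2^{n-3}$ are precisely the $n-2$ values found in Item~1, so the count is exact. I expect this inversion --- passing from mere realizability back to the rigid form of the characteristic sequence via Theorem~\ref{th_bintwo1} --- to be the crux, since Items~1 and~2 are pure counting whereas sharpness genuinely uses the structure theorem. The small cases $n=3,4$, where Theorem~\ref{th_bintwo1} does not apply, contain at most two integers in the interval, both realizable, so $n-2$ is verified directly.
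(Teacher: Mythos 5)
Your proposal is correct and follows essentially the same route as the paper: Items 1 and 2 are the same binomial count of binary decompositions fed through Proposition~\ref{pr_digits}, and the exactness in Item 3 comes from Theorem~\ref{th_bintwo} (realizability) combined with Theorem~\ref{th_bintwo1} (exclusion of all other values). The only difference is that you spell out the Item~3 inversion in detail (choosing a maximal generating set, invoking Lemma~\ref{N=n-1}, applying Theorem~\ref{th_bintwo1} with $h=n-1$) and explicitly check the small cases $n=3,4$ where Theorem~\ref{th_bintwo1} does not apply, both of which the paper leaves implicit.
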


\begin{proof}

1. Since $ 2^{n-k} - 1 - 2^{n-k-1} = 2^{n-k-1}-1$, there are exactly $2^{n-k-1}$ values in the interval. By Proposition \ref{pr_digits} for a given $h$ such that $0 \le h \le \min(k-1, n-k-1)$ all of the values below $2^{n-k}$ with $h+1$ elements 1 in binary decomposition are feasible. The number of such values in the interval $[2^{n-k-1}, 2^{n-k} - 1]$ is equal to  $\binom{n-k-1}{h}$ as we have $n-k$ digits with the leading digit equal to 1 and we can place remaining $h-1$ elements 1 into other $n-k-1$ digits arbitrarily. Thus, by summing all binomial coefficients for all possible $h$ we can infer that there are at least $1 + \binom{n-k-1}{1} + \ldots + \binom{n-k-1}{\min(k-1, n-k-1)}$   realizable length values in the interval.

2. If  $k > \lceil \frac{n}{2} \rceil$, then the binomial sum is equal to $2^{n-k-1}$ which means that all of the values in the interval are feasible.

3. If $k=2$, then by  Theorems \ref{th_bintwo} and \ref{th_bintwo1} only $1 + n-3 = n-2$ values in the interval are feasible.

\end{proof}

Observe that the converse statement to Proposition \ref{pr_digits}, in the same sense as Theorem \ref{th_bintwo1} is the 'converse' to Theorem \ref{th_bintwo}, is not true. In particular, there exist algebras of length $x$ which has $k$ 1s in its binary decomposition, but $x \ge 2^{n-k}$, and Proposition \ref{pr_interval} provides only a lower bound on the number of feasible values in specified intervals.

\begin{example}\label{ex_ex}
	 Consider the sequence $(0,1,2,3,5,10,20,23)$ with the functions $t_1(k)$ and $t_2(k)$ defined as follows:
	\begin{center}
		\begin{tabular}{|c|c|c|}
			\hline
			$k$ & $t_1(k)$ & $t_2(k)$ \\
			\hline
			2 & 1 & 1 \\
			\hline
			3 & 1 & 2 \\
			\hline
			4 & 2 & 3 \\
			\hline
			5 & 4 & 4 \\
			\hline
			6 & 5 & 5 \\
			\hline
			7 & 3 & 6 \\			
			\hline
		\end{tabular}
	\end{center}
	
	On the one hand, it is proto-characteristic, and by Theorem \ref{th_ex} there exists an algebra of dimension 8 and length 23. On the other hand, $23=10111_2$. So, there are four elements 1 in the binary decomposition, however $23 > 16= 2^{8-4}$.
\end{example}

We conclude this subsection by the list of all realizable values in the top half of the possible length values. They appear to be rather rare and more or less isolated. Namely, the corollary   below demonstrates that all realizable values of length that are bigger than the half of the maximal value have the form $2^{n-3} +2^p$, where $1\le p\le n-3$. This implies that the all the   intervals $[2^{n-3}+2^p+1,2^{n-3}+2^{p+1}-1]$, $1\le p\le n-3$, are gaps in the the set of values of the length function. 

\begin{corollary} \label{cor:gaps}
	Let $n>4$ be integer. Then for each
	a unital $\F$-algebra  $\A$   with $\dim \A = n $ and  $l(\A) > 2^{n-3}$ we have $l(\A) =2^{n-3}+2^{p}$ for $p\in \{0,\ldots,n-3\}$ .
\end{corollary}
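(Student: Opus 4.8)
The plan is to obtain this statement as a direct specialization of Theorem~\ref{th_bintwo1}, applied to a length-realizing generating set with the index taken to be the largest admissible value $h = n-1$. Since $l(\A)$ is defined in Definition~\ref{alg_len} as a maximum over all generating sets, I would first fix a generating set $\SS$ with $\L(\SS) = \A$ and $l(\SS) = l(\A)$, and let $(m_0, m_1, \ldots, m_{n-1})$ be its characteristic sequence.

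The second step is to identify the last term of this sequence with the quantity of interest. Because $\dim \A = n > 2$, Lemma~\ref{N=n-1} guarantees that the characteristic sequence has exactly $n$ terms and that its final term satisfies $m_{n-1} = l(\SS) = l(\A)$. Hence the hypothesis $l(\A) > 2^{n-3}$ rewrites as $m_{n-1} > 2^{(n-1)-2}$.

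The third step is to verify that the hypotheses of Theorem~\ref{th_bintwo1} are met for $h = n-1$. The theorem requires $h \in \{4, \ldots, n-1\}$, and the assumption $n > 4$ yields $n - 1 \ge 4$, so $h = n-1$ is admissible; the inequality $m_{n-1} > 2^{(n-1)-2}$ is precisely the remaining hypothesis. Applying Theorem~\ref{th_bintwo1} then gives $m_{n-1} = 2^{(n-1)-2} + 2^{q} = 2^{n-3} + 2^{q}$ with $q \in \{0, \ldots, (n-1)-2\} = \{0, \ldots, n-3\}$. Setting $p = q$ finishes the argument.

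Since the substantive combinatorial work has already been carried out in Theorem~\ref{th_bintwo1} (itself an induction on $h$), I do not expect any genuine obstacle here; the only points demanding care are the bookkeeping of the index shift $h \mapsto h-2$ in the exponent and the verification that the extremal choice $h = n-1$ falls in the allowed range, both of which are immediate from $n > 4$. In particular, no new estimate on characteristic sequences is needed: the corollary is simply a reinterpretation of Theorem~\ref{th_bintwo1} in terms of the algebra length $l(\A)$ rather than an arbitrary intermediate term $m_h$, made possible by the fact that the maximal term of a length-realizing characteristic sequence equals $l(\A)$.
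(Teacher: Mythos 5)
Your proposal is correct and follows exactly the paper's own argument: pick a length-realizing generating set, identify $l(\A)$ with the last term $m_{n-1}$ of its characteristic sequence, and apply Theorem~\ref{th_bintwo1} with $h = n-1$. The only difference is that you spell out the hypothesis checks (the range of $h$ and the identification $m_{n-1} = l(\SS)$ via Lemma~\ref{N=n-1}) that the paper leaves implicit.
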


\begin{proof}
	Consider a generating set $\SS$ of $\A$ such that $l(\A) = l(\SS)$ and its characteristic sequence $(m_0, m_1,\ldots, m_{n-1})$. Since $m_{n-1} = l(\A) > 2^{n-3}$, by Theorem \ref{th_bintwo1} we have $m_{n-1}=2^{n-3}+2^{p}$, $p \in \{0,\ldots,n-3\}$.
\end{proof}

\subsection{Bounds for the subsequent values of the length function}

Now we may investigate the upper bound for the  sequence of consequent values of lengths for algebras of a given dimension, which is the same as the  lower bound for the first non-feasible value.

\begin{definition}
Let $n\ge2$ be a natural number. By $B(n)$ we  
understand the maximal integer $l>0$ such that there exist
algebras 
of dimension $n$ and lengths $1,\ldots,l $.
\end{definition}

\begin{proposition}
We have $B(2)=1, B(3)=2, B(4)=4, B(5) = 6$.
\end{proposition}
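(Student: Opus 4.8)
The plan is to use the definition of $B(n)$ directly: for each $n$ I must exhibit algebras of dimension $n$ realizing \emph{every} length in the initial segment $1,2,\ldots,B(n)$, and then show that the next value $B(n)+1$ is not realizable. For the three small dimensions $n\in\{2,3,4\}$ this is immediate and requires no new work. By Proposition \ref{pr_base} every value between $1$ and $2^{n-2}$ is realizable in dimension $n=2,3,4$, while the universal upper bound $l(\A)\le 2^{n-2}$ (Proposition \ref{prop_max} for $n>2$; for $n=2$ it reduces to $l(\A)\le 1$, which is immediate) forbids any larger length. Hence in these cases the whole admissible range $1,\ldots,2^{n-2}$ is filled without gaps, so $B(n)=2^{n-2}$, giving $B(2)=1$, $B(3)=2$, and $B(4)=4$.

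The dimension $n=5$ is the genuinely interesting case, where $2^{n-2}=8$ but a gap appears. First I would collect the realizable values in $\{1,\ldots,6\}$ from the constructions already available. The powers of two $1=2^0$, $2=2^1$, $4=2^2$ are realizable by Proposition \ref{pr_powertwo}, since the exponents are at most $n-2=3$. The value $3$, whose binary expansion has two digits $1$, is realizable by Proposition \ref{pr_digits} taken with $k=2$, because $3<2^{5-2}=8$. Finally, Theorem \ref{th_bintwo} supplies the lengths $2^{n-3}+2^{p}=4+2^{p}$ for $p\in\{0,1,2\}$, namely $5$, $6$, and $8$. Together these references cover $\{1,2,3,4,5,6\}$ (as well as $8$), so every length from $1$ through $6$ is attained in dimension $5$.

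It remains to rule out length $7$ in dimension $5$, and this is where the real content lies. Here I would invoke Corollary \ref{cor:gaps} (equivalently Theorem \ref{th_bintwo1} applied to the top term $m_{n-1}$ of a maximal characteristic sequence): any dimension-$5$ algebra with $l(\A)>2^{n-3}=4$ must satisfy $l(\A)=2^{n-3}+2^{p}=4+2^{p}$ for some $p\in\{0,1,2\}$, hence $l(\A)\in\{5,6,8\}$; since $7$ is not of this form, no algebra of dimension $5$ has length $7$. Consequently the initial segment of realizable lengths stops exactly at $6$, so $B(5)=6$. The bookkeeping of which cited result produces each individual value is routine; the one step carrying actual weight is the non-realizability of $7$, and the main obstacle is precisely that $7$ cannot be excluded by any elementary argument but only through the structural restriction on top-half length values encoded in Theorem \ref{th_bintwo1} and Corollary \ref{cor:gaps}.
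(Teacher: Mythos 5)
Your proof is correct and follows essentially the same route as the paper: for $n=2,3,4$ combine Proposition \ref{pr_base} with the maximality of $2^{n-2}$, and for $n=5$ certify that $1,\ldots,6$ are all realizable and exclude $7$ via Corollary \ref{cor:gaps}, exactly as the paper does. The only difference is bookkeeping for the values $1$ through $5$ in dimension $5$ --- the paper lifts them from dimension $4$ using Propositions \ref{pr_mon} and \ref{pr_plusone}, while you cite the direct constructions of Propositions \ref{pr_powertwo}, \ref{pr_digits} and Theorem \ref{th_bintwo} --- which is immaterial, since the one step carrying real weight (non-realizability of $7$) is handled identically.
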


\begin{proof}
The first three values follow from Proposition \ref{pr_base}, since these are the maximal possible values of length for $n\in \{2,3,4\}$. For $n=5$ note that 7 is not a feasible length value by Corollary \ref{cor:gaps} as $7=2^{5-3} +3 \ne  2^{5-3}+2^p$. Also the values from 1 till 4 are feasible by Proposition \ref{pr_mon} as they are feasible for $n=4$. The value   5 is feasible by Proposition \ref{pr_plusone} as 4 is a feasible length value for dimension 4. The value   6 is feasible by Theorem \ref{th_bintwo} as $6 = 2^{5-3} + 2^1$.
\end{proof}

\begin{proposition}
Let $n \ge 2$ be integer. Then $B(n+2) \ge 2\cdot B (n) + 1$. 
\end{proposition}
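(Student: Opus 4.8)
The plan is to show directly that every integer $l$ with $1 \le l \le 2B(n)+1$ is realizable as the length of a unital algebra of dimension $n+2$; this immediately yields $B(n+2) \ge 2B(n)+1$ by the definition of $B$. The key tools are the three dimension-raising constructions already established: Proposition~\ref{pr_mon} (raise the dimension by one keeping the length), Proposition~\ref{pr_double} (raise the dimension by one and double the length), and Proposition~\ref{pr_plusone} (raise the dimension by one and add one to the length). By definition of $B(n)$, algebras of dimension $n$ and every length $1,\ldots,B(n)$ exist, and I would use these as starting points, reaching dimension $n+2$ in exactly two applications of the constructions above.

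First I would split on the parity of the target value $l$. If $l$ is even, write $l=2m$ with $1\le m\le B(n)$, so that $l$ ranges over $2,4,\ldots,2B(n)$. Starting from a dimension-$n$ algebra of length $m$, Proposition~\ref{pr_double} produces a dimension-$(n+1)$ algebra of length $2m=l$, and then Proposition~\ref{pr_mon} gives a dimension-$(n+2)$ algebra of the same length $l$.

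Next, if $l$ is odd and $l\ge 3$, write $l=2m+1$ with $1\le m\le B(n)$, so that $l$ ranges over $3,5,\ldots,2B(n)+1$. Starting again from a dimension-$n$ algebra of length $m$, Proposition~\ref{pr_double} yields length $2m$ in dimension $n+1$, and Proposition~\ref{pr_plusone} then yields length $2m+1=l$ in dimension $n+2$. The single remaining case is $l=1$, which is handled directly by Proposition~\ref{pr_onex}, since $n+2\ge 2$.

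Finally I would observe that these cases exhaust the interval: every even integer in $\{1,\ldots,2B(n)+1\}$ is at most $2B(n)$ and hence covered, and every odd integer in the interval is at most $2B(n)+1$ and hence covered, so all lengths $1,\ldots,2B(n)+1$ occur in dimension $n+2$. The argument is entirely routine; the only point requiring minor care is verifying that the parity case split reaches exactly up to $2B(n)+1$ without gaps, and isolating the boundary value $l=1$, which falls outside the family "$l=2m+1$ with $m\ge 1$" and must be supplied separately.
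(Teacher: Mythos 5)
Your proof is correct and follows essentially the same route as the paper: a parity split on the target length, using Proposition~\ref{pr_double} for the even values and Proposition~\ref{pr_double} followed by Proposition~\ref{pr_plusone} for the odd values, with Proposition~\ref{pr_mon} to adjust dimension. If anything, your explicit treatment of the boundary value $l=1$ via Proposition~\ref{pr_onex} is slightly more careful than the paper's argument, which covers that case only implicitly through the monotonicity of $B$.
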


\begin{proof}	
  By Propositions \ref{pr_double} and \ref{pr_plusone} there exist algebras of dimension $n+2$ and lengths $2 \cdot 1,\ldots,2 \cdot B(n+1)$ (the first proposition) and $2 \cdot 1 +1,2 \cdot 2 +1,\ldots,2 \cdot B(n)+1$ (both propositions combined).  By Proposition \ref{pr_mon}, $B(h)$ is monotonically non-decreasing, hence $B(n+1) \ge B(n)$ and algebras of dimension $n+2$ with lengths $1,2,\ldots,2 \cdot B(n)+1$ exist, which means $B(n+2) \ge 2\cdot B (n) + 1$.
\end{proof}

\begin{proposition}
Let $n \ge 2$ be integer. Then	$B(n) \ge 2 ^{\lceil \frac{n}{2} \rceil}$ for $n\ge 6$.
\end{proposition}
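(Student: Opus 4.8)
The plan is to prove the bound by induction on $n$ with step $2$, using the recursion $B(n+2)\ge 2B(n)+1$ established above for the inductive step and treating the two parities separately through the base cases $n=6$ and $n=7$. The inductive step is uniform and immediate: assuming $B(n)\ge 2^{\lceil n/2\rceil}$, the recursion gives
\[
B(n+2)\ge 2B(n)+1\ge 2\cdot 2^{\lceil n/2\rceil}+1=2^{\lceil n/2\rceil+1}+1>2^{\lceil n/2\rceil+1}=2^{\lceil (n+2)/2\rceil},
\]
where the last equality uses $\lceil (n+2)/2\rceil=\lceil n/2\rceil+1$. Thus it remains only to verify the two base cases, which seed the even and odd chains respectively and cover all $n\ge 6$.

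For $n=6$ the bound reads $B(6)\ge 2^3=8$, and this follows at once from the recursion applied at $n=4$: $B(6)\ge 2B(4)+1=2\cdot 4+1=9\ge 8$. The odd base case $n=7$, where one must show $B(7)\ge 2^4=16$, i.e.\ that \emph{every} value $1,\ldots,16$ is realizable in dimension $7$, is the only place requiring genuine work. Here I would split the range: the values $1,\ldots,14$ all have at most three $1$s in their binary decomposition and are strictly less than $2^{7-3}=16$, so Proposition~\ref{pr_digits} (with $k=3$) yields their realizability, while the value $16=2^4$ is realizable by Proposition~\ref{pr_powertwo} since $4\le 7-2$.

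The remaining value $15=2^4-1$ is exactly the obstacle, and I expect it to be the crux of the whole argument. Being the all-ones number it falls outside the reach of Proposition~\ref{pr_digits}, and the most natural route through smaller dimensions fails as well: writing $15=2\cdot 7+1$ would require $7$ to be realizable in dimension $5$, but by Corollary~\ref{cor:gaps} the value $7=2^{5-3}+3$ is a gap there, and a similar obstruction rules out $15=14+1$ via Proposition~\ref{pr_plusone}. I would therefore construct it directly, exhibiting the sequence
\[
(m_0,\ldots,m_6)=(0,1,2,3,5,10,15)
\]
and checking that it is proto-characteristic. Taking $t_1=(1,1,2,4,4)$ and $t_2=(1,2,3,4,5)$ on the indices $2,\ldots,6$ realizes each term as $m_{t_1(k)}+m_{t_2(k)}$, and since $t_2$ is strictly increasing across these indices, Property~4b of Definition~\ref{PrCharSeq} holds automatically (the monotonicity of $m$ and Properties~1--3 being immediate). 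Theorem~\ref{th_ex} then furnishes a unital algebra of dimension $7$ and length $15$, completing the base case $n=7$ and hence the induction.
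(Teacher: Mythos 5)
Your proof is correct, but it is organized quite differently from the paper's. The paper argues directly for each dimension, splitting by parity: for $n=2h$ it covers $[1,2^h-1]$ by the interval result (Proposition~\ref{pr_interval}, Item 2), and for $n=2h-1$ it covers $[1,2^{h-1}-1]$ by intervals, $[2^{h-1},2^h-2]$ by Proposition~\ref{pr_digits}, and then runs a dedicated induction on $h$ just for the all-ones value $2^h-1$, whose base case is exactly your sequence $(0,1,2,3,5,10,15)$ with the same table of $t_1,t_2$, and whose step is one application of Proposition~\ref{pr_double} followed by Proposition~\ref{pr_plusone}. You instead promote that doubling-plus-one mechanism, packaged as the recursion $B(n+2)\ge 2B(n)+1$, into the engine of the whole proof: a two-step induction on $n$ reduces everything to the base cases $n=6$ (immediate from $B(4)=4$) and $n=7$ (values $1,\ldots,14$ by Proposition~\ref{pr_digits}, $16$ by Proposition~\ref{pr_powertwo}, and $15$ by the explicit construction). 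Your route is shorter and more self-contained — it never invokes Proposition~\ref{pr_interval}, and it isolates all the genuine work in the single dimension $7$ — while the paper's route yields more information along the way, exhibiting explicitly realizable intervals in every dimension rather than inheriting realizability through the recursion. Your identification of $15$ as the irreducible obstruction (since $7$ is a gap in dimension $5$ and $14$ is a gap in dimension $6$, by Corollary~\ref{cor:gaps}) is accurate and explains why both proofs must contain the same hand-built sequence at that point.
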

\begin{proof}
1.  Let $n$ be even, i.e. $n=2h$ for some integer $h\ge 3$. Then by Proposition \ref{pr_interval}, Item 2, every length value $l_0 \in [1, \ldots, 2^h-1]$ is attainable for algebras of dimension $n$, since $l_0$   can be placed into an interval $[2^{2h-k-1}, 2^{2h-k} - 1]$ with $k \ge h$. Thus, $B(2h) \ge 2^h$.
	
2.	 Consider the case   $n$ is odd, i.e. $n=2h-1$ for some integer $h\ge 4$. By Proposition \ref{pr_interval}, Item 2, each integer value $l_0\in [1 , 2^h-1]$ is realizable as a length of an algebra of dimension $n$, since $l_0\in [2^{n-k-1}, 2^{n-k} - 1]$ for some $k \ge h-1$. By  Proposition \ref{pr_digits} all integer values  $l_1 \in [2^{h-1},2^h-2] $ are also realizable since 1)   $l_1< 2^{h}=2^{2h-1-(h-1)}$ and 2) $l_2=2^h -1$ is the only integer with more than $h-1$ elements 1 in its binary decomposition satisfying $l_2<2^{h}$. To show that $ l_2$ is a feasible length value we   use the induction on~$h$.
	
	The base. For $h=4$, $2^4 -1 = 15$. Consider the sequence $(0,1,2,3,5,10,15)$ with $t_1$ and $t_2$ defined as follows:
	\begin{center}
	\begin{tabular}{|c|c|c|}
		\hline
		$k$ & $t_1(k)$ & $t_2(k)$ \\
		\hline
		2 & 1 & 1 \\
		\hline
		3 & 1 & 2 \\
		\hline
		4 & 2 & 3 \\
		\hline
		5 & 4 & 4 \\
		\hline
		6 & 4 & 5 \\
		\hline
	\end{tabular}
	\end{center}
	It is proto-characteristic, and by Theorem \ref{th_ex} there is an algebra of dimension 7 and length 15.
	
	The step. Assume the statement holds for $h=H-1$ with $H>4$. Then for $h=H$ we need to prove that there exists an algebra of length $2^{H}-1$ and dimension $2H - 1$. By the induction hypothesis, there exists an algebra $\A$ with $l(\A)=2^{H-1}-1$ and  $\dim(\A)=2H -3$. Then by Propositions \ref{pr_double}  there exists an algebra $\A_1$ such that $l(\A_1)=(2^{H-1}-1) \cdot 2=2^{H}-2$ and   $\dim(\A_1)=2H-2$. Hence by Proposition \ref{pr_plusone}  there exists an algebra $\A_2$ with $l(\A_2)=2^{H}-2+1 = 2^H - 1$ and  $\dim(\A_2)=2H - 1$. This concludes the induction.
	
	$2^h$ is a feasible value by Proposition \ref{pr_powertwo}. Thus, $B(2h-1) \ge 2^h$.
\end{proof}

\section{Characteristic sequence and basis}

In the next section we characterize the algebras of maximal length. To achieve this goal in the current section we construct a special basis $W = \{e_0,\ldots,e_{n-1}\}$ of an algebra $\A$ corresponding well to a characteristic sequence $M = (m_0,\ldots,m_{n-1})$ of a given generating set $\SS$ of this algebra. 
We will approach this goal gradually. 

\begin{definition}

Consider an $\F$-algebra $A$, its generating set $\SS$ and a characteristic sequence $M=(m_0,\ldots,m_{n-1})$ of $\SS$. Let $k_1$ be such an index that $m_1=\ldots=m_{k_1}=1$ and if $k_1 < n-1$ it holds that $m_{k_1 +1}>1$ and $t_1,t_2$ be the functions mapping $\{k_1 +1,\ldots,n-1\}$ to $\{1,\ldots,n-1\}$, and satisfying the two following properties (i.e. Item 4 of Definition \ref{PrCharSeq}):

\begin{itemize} \item[a.] For $k$ such that $k_1 <k <n$ the equality  $m_{t_1(k)}+m_{t_2(k)}=m_k$ and inequalities $t_1(k), t_2(k) <k$ hold. \item[b.] For all $h_1,h_2$ such that $k_1<h_1<h_2<n$   at least one of the following two inequalities holds: $t_1(h_1)<t_1(h_2)$ or $t_2(h_1)<t_2(h_2)$. \end{itemize}

We say that the basis $W= \{e_0,\ldots,e_{n-1}\}$ of $\A$ {\em corresponds} to the characteristic sequence $M$ equipped with the functions $t_1,t_2$  if the following holds:

\begin{enumerate}
	\item For each $  i \in \{0,\ldots,n-1 \}$ the element $e_i\in W$ is a word in $\SS$ of length~$m_i$.
	\item For every $k$ such that $k_1 <k <n$ the  equality $e_{t_1(k)} \cdot e_{t_2(k)} = e_k$ is true in~$\A$.
	\item $ \L_1(\SS) = \langle \{ e_0, \ldots, e_{k_1} \} \rangle$.
\end{enumerate}

\end{definition}

\begin{remark}
For a characteristic sequence $M$ at least one pair of such functions $t_1,t_2$ exists since $M$ is proto-characteristic by Proposition \ref{pr_full}, which means that Item 4 of Definition \ref{PrCharSeq} applies to it.
\end{remark}

Let us introduce an auxiliary definition.

\begin{definition}\label{def_graded}
	We say that the basis $W= \{e_0,\ldots,e_{n-1}\}$ of the $\F$-algebra $\A$ is {\em graded} by the generating set $\SS$  of $\A$, or just {\em $\SS$-graded} basis, if  there exists a subset $\SS' \subset \SS$, which is also a generating set, and a sequence of sets $W_0 \subset W_1 \subset \ldots \subset W_{l(\SS)}= W$ such that:

\begin{enumerate} 
	
	\item $W_0 =\{1\}$, $W_1 = \SS' \cup  \{1\}$.
	
	\item $W_k$ is a basis of $\L_k(\SS)$ for $k = 1,\ldots, l(\SS)$
	
	\item $W_k \setminus W_{k-1}$ consists of irreducible words of length $k$ in $\SS'$.
	
	\item Each element of $W_k$, which is a word of length at least 2 in $\SS'$, is a product of two non-unit elements from $W_{k}$. 
	
\end{enumerate}
	
\end{definition}

\begin{lemma}\label{lem_subbase}
Let   $\A$ be an $\F$-algebra, $\dim(\A)= n \ge 2$,  $\SS$ be a  generating set of $\A$. Then there exists an $\SS$-graded  basis $W$ of~$\A$.
\end{lemma}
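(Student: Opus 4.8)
The plan is to construct the chain $W_0\subseteq W_1\subseteq\cdots\subseteq W_{l(\SS)}$ by induction on the level $k$, maintaining at every stage that $W_k$ is a basis of $\L_k(\SS)$ consisting of words in a fixed thinned generating set, so that $W=W_{l(\SS)}$ is the desired basis. First I would fix $\SS'$: since $\L_1(\SS)=\langle\{1\}\cup\SS\rangle$ and $1\neq0$ in a unital algebra, I can extract from the spanning family $\{1\}\cup\SS$ a basis of $\L_1(\SS)$ containing $1$ by a Steinitz exchange; writing it as $\{1\}\cup\SS'$ with $\SS'\subseteq\SS$ defines $W_0=\{1\}$ and $W_1=\SS'\cup\{1\}$. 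Because $\L_1(\SS')=\L_1(\SS)$, Lemma~\ref{lem_redux} gives $\L_k(\SS')=\L_k(\SS)$ for all $k$, so $\SS'$ is again a generating set of the same length. Each element of $\SS'$ is linearly independent from $1$, hence lies outside $\L_0(\SS')=\langle1\rangle$ and is therefore an irreducible word of length $1$; thus Properties 1--3 hold at level $1$ and Property 4 is vacuous there.

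For the inductive step, suppose $W_{k-1}$ has been built. The essential observation is the decomposition $\L_k(\SS')=\L_{k-1}(\SS')+\sum_{a=1}^{k-1}\L_a(\SS')\cdot\L_{k-a}(\SS')$, where $\L_a(\SS')\cdot\L_{k-a}(\SS')$ denotes the span of all products $xy$ with $x\in\L_a(\SS')$ and $y\in\L_{k-a}(\SS')$. This holds because any word of length exactly $k$ splits at its outermost multiplication into two subwords of positive lengths summing to $k$. Using bilinearity of multiplication together with the inductive fact that $W_a$ and $W_{k-a}$ are bases of $\L_a(\SS')$ and $\L_{k-a}(\SS')$, each summand is spanned by the products $e'e''$ with $e'\in W_a$, $e''\in W_{k-a}$. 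Discarding factors equal to $1$ (which only reproduce elements of $\L_{k-1}(\SS')$), I obtain a spanning set $W_{k-1}\cup P_k$ of $\L_k(\SS')$, where $P_k$ consists of products of two non-unit elements taken from $\bigcup_{a\le k-1}W_a=W_{k-1}$. I then run the Steinitz exchange once more to enlarge the independent set $W_{k-1}$ to a basis $W_k=W_{k-1}\cup N_k$ using only elements of $P_k$.

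Finally I would verify the four defining properties for $W_k$. Each adjoined $w\in N_k$ is a product $e'e''$ of two words in $\SS'$, hence a word of length $l(e')+l(e'')\le k$; since $w$ was added it lies outside $\L_{k-1}(\SS')$, which forces $l(w)=k$, and, as $\L_m(\SS')\subseteq\L_{k-1}(\SS')$ for every $m<k$, also forces $w$ to be irreducible in the sense of Definition~\ref{Def_Irr}. This yields Properties 2 and 3. For Property 4, the new elements of $N_k$ are by construction products of two non-unit elements of $W_{k-1}\subseteq W_k$, while the words of length at least $2$ already present in $W_{k-1}$ are handled by the inductive hypothesis. I expect the main obstacle to be precisely this Property 4: it is not enough to extend $W_{k-1}$ to a basis of $\L_k(\SS')$ by arbitrary irreducible words of length $k$, since one must arrange each new basis vector to be a product of two \emph{previously chosen} basis elements. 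The decomposition above combined with bilinearity is what makes this possible, and checking that the Steinitz exchange can indeed be carried out inside the restricted product set $P_k$ (rather than the full $\L_k(\SS')$) is the step deserving the most care.
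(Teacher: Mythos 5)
Your proof is correct and follows essentially the same route as the paper: thin $\SS$ to a subset $\SS'$ that together with $1$ forms a basis of $\L_1(\SS)$, invoke Lemma~\ref{lem_redux} to keep $\L_k(\SS')=\L_k(\SS)$, and inductively extend $W_{k-1}$ to a basis of $\L_k(\SS')$ by an exchange argument using only products of pairs of non-unit elements of $W_{k-1}$. The worry you voice at the end is already settled by your own construction: since $W_{k-1}\cup P_k$ spans $\L_k(\SS')$, the exchange lemma completes the independent set $W_{k-1}$ to a basis using elements of $P_k$ alone, which is precisely the paper's step.
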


\begin{proof}

Let $\SS' = \{a_1,\ldots,a_p \}$ be a maximal subset of $\SS$ which is linearly independent modulo $\F$. Note that $\F \cap \SS' = \emptyset$, as otherwise $\SS'$ would not be linearly independent modulo $\F$.

Define $W_1$ as $\SS' \cup \{1\}$. We will prove that $W_1$ is a linearly independent set. Consider a linear combination of its elements equal to zero,  $$f 1 + f_1 a_1 + \ldots + f_p a_p =0$$  with coefficients $f,f_1, \ldots, f_p \in \F$. Hence $ f_1 a_1 + \ldots f_p a_p  = -f$. If $f_1 = \ldots = f_p = 0$, then $f = 0$ and the combination is trivial. Otherwise such an equality  contradicts the fact that $\SS'$ is linearly independent modulo $\F$. 

Note that $\L_1(\SS')$ by definition is equal to $\langle W_1 \rangle = \langle  \SS' \cup  \{1\} \rangle =  \langle  \SS \cup  \{1\} \rangle = \L_1(\SS)$.  By Lemma \ref{lem_redux} we have $\L_k(\SS) = \L_k (\SS')$, which implies that $\SS'$ is a generating set. Additionally, for the purposes of Item 2 it is enough to prove that $W_k$ is a basis of $\L_k(\SS')$.

We construct the sequence of $W_k$ satisfying Items 2-4 inductively.

The base for $k=1$ is provided above. Indeed, $W_1$ is a basis of $\L_1(\SS')$ as it is linearly independent and spans $\L_1(\SS')$, $W_1 \setminus W_0 = \SS'$ consists of irreducible words of length 1 in $\SS'$ (as $\F \cap \SS' = \emptyset$, words of length 1 cannot be reduced), and, since in $W_1$ there are no elements which are words of length at least 2 in $\SS'$, Item 4 holds as well. 

The step. Assume $W_1, \ldots, W_{K-1}$ are already constructed. Consider an irreducible word $w$ of length $K \le l(\SS)$. Note that $w = w' \cdot w''$, where $w'$ has length $j$, $1\le j < K$, and $w''$ has length $K-j$ , which is also less than $K$. As $w' \in \L_j(\SS') = \langle W_j \rangle$ and $w'' \in \L_{K-j}(\SS') = \langle W_{K-j} \rangle$, we have $w \in \langle W_j \cdot W_{K-j} \rangle$. From this it follows that all of the irreducible words  of length $K$ belong to $\langle \bigcup\limits_{i=1,\ldots,K-1} W_i W_{K-i} \rangle$. Thus we can expand $W_{K-1}$  with irreducible words belonging to $W_i W_{K-i}$  for some indices $i$ to obtain the basis of $\L_K(\SS')$. We name the resulting set $W_K$, as it satisfies Items 2-4: 

i) It is indeed a basis by construction.

ii) Only irreducible words of length $K$ in $\SS'$ were added, as any word of length less than $K$ belongs to $\langle W_{K-1} \rangle$ already.

iii) Each added element is a product of two elements from $W_{K-1}$, which, by construction, is a subset of $W_K$.

\end{proof}

\begin{lemma}\label{lem_meekbase}
Let   $\A$ be an $\F$-algebra, $\dim(\A)= n \ge 2$,  $\SS$ be a  generating set of $\A$. Let  $M=(m_0,\ldots,m_{n-1})$  be the characteristic sequence of $\SS$  and $W$ be an $\SS$-graded basis of $\A$ with associated set $\SS' \subset \SS$ and sequence of sets $W_0 \subset W_1 \subset \ldots \subset W_{l(\SS)}= W$. Then there exists a numbering   $e_i$, $i=0,\ldots,n-1$, of the elements from $W$ such that 

\begin{enumerate}

\item $e_0 =1$

\item For all $r$ such that $1 \le r \le l(\SS)$ it holds that $W_r = \{e_0,\ldots, e_{s_1 + \ldots +s_r} \}$, where $s_r = \dim \L_r(\SS) - \dim \L_{r-1}(\SS)$. Additionally, the length of $e_j$ as a word in $\SS'$ is equal to $m_j$ for $j=0,\ldots,s_1 +\ldots +s_r$.

\item Under this numbering for all $r$  such that $1 \le r \le l(\SS)$ there exist functions $t_1^{(r)}$ and $t_2^{(r)}$ from the set $\{s_1 +1,\ldots,s_1+\ldots+s_r\}$ to $\{1,\ldots,s_1+\ldots+s_r\}$ for which  the following two properties are satisfied.

\begin{itemize}

\item[a.] For $k$ such that $s_1 <k \le s_1 +\ldots +s_r$ we have  $e_{t_1^{(r)}(k)} e_{t_2^{(r)}(k)} = e_k$. 

\item[b.] For all $h_1,h_2$ such that $s_1<h_1<h_2 \le s_1 + \ldots + s_r$  at least one of the inequalities $t_1^{(r)}(h_1)<t_1^{(r)}(h_2)$ or $t_2^{(r)}(h_1)<t_2^{(r)}(h_2)$ is satisfied. 

\end{itemize}

\end{enumerate}

\end{lemma}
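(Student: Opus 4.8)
The plan is to read everything off the filtration $W_0 \subset W_1 \subset \ldots \subset W_{l(\SS)} = W$ by numbering the basis \emph{level by level}, and then to extract the functions $t_1^{(r)}, t_2^{(r)}$ from the factorizations that the graded structure (Definition \ref{def_graded}, Item 4) already provides. First I would set $e_0 = 1$, which is legitimate since $W_0 = \{1\}$, and then for each level $r = 1, \ldots, l(\SS)$ assign the indices $s_1 + \ldots + s_{r-1} + 1, \ldots, s_1 + \ldots + s_r$ to the elements of $W_r \setminus W_{r-1}$; here $s_r = \dim \L_r(\SS) - \dim \L_{r-1}(\SS)$ equals $|W_r \setminus W_{r-1}|$ because $W_r$ is a basis of $\L_r(\SS)$. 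This makes Item 1 immediate and Item 2 a formality: $W_r = \{e_0, \ldots, e_{s_1 + \ldots + s_r}\}$ by construction, and since $W_r \setminus W_{r-1}$ consists of irreducible words of length $r$ in $\SS'$ (Item 3 of Definition \ref{def_graded}), each $e_j$ in that block has length $r$. Comparing with Definition \ref{CharSeqUn}, which assigns exactly the value $r$ to these indices, yields $\mathrm{length}(e_j) = m_j$. The order of the elements \emph{inside} each block is still free and will be fixed only when I come to Item 3b.

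For Item 3 I would invoke Item 4 of Definition \ref{def_graded}: every $e_k$ with $k > s_1$ is a product $e_k = e_a \cdot e_b$ of two non-unit basis elements, and I fix one such factorization, setting $t_1(k) = a$ and $t_2(k) = b$. Being non-unit, $e_a$ and $e_b$ have lengths $\ge 1$ summing to $m_k$, so each has length $< m_k$; hence both lie in $W_{m_k - 1}$ and have index at most $s_1 + \ldots + s_{m_k - 1} < k$. This gives the equality $e_{t_1(k)} e_{t_2(k)} = e_k$ and the bound $t_1(k), t_2(k) < k$ of Item 3a. I would also record that the pairs $(t_1(k), t_2(k))$ are pairwise distinct, since equal pairs would force $e_{h_1} = e_{h_2}$ for distinct basis elements.

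The genuine work is the staircase condition, Item 3b, and this is where I finally pin down the within-block order: inside each level I list the elements of $W_r \setminus W_{r-1}$ by increasing value of the index-sum $t_1(k) + t_2(k)$ (ties broken arbitrarily). I then claim Item 3b cannot fail. If it failed for some $h_1 < h_2$, then $t_1(h_1) \ge t_1(h_2)$ and $t_2(h_1) \ge t_2(h_2)$ simultaneously; summing the corresponding lengths and using that $(m_i)$ is non-decreasing gives $m_{h_1} = m_{t_1(h_1)} + m_{t_2(h_1)} \ge m_{t_1(h_2)} + m_{t_2(h_2)} = m_{h_2}$, whereas the level-by-level numbering gives $m_{h_1} \le m_{h_2}$. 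Hence $m_{h_1} = m_{h_2}$, so $e_{h_1}$ and $e_{h_2}$ sit in the same level; but then the earlier pair componentwise-dominates the later one, and as the pairs are distinct this forces $t_1(h_1) + t_2(h_1) > t_1(h_2) + t_2(h_2)$, contradicting the within-block ordering by index-sum. The point worth emphasizing is that a \emph{dominating} factor pair can only occur at a higher-or-equal level, so cross-level comparisons are handled for free by monotonicity of $(m_i)$ and only the single-level case needs the sort. I expect this monotonicity-of-levels observation to be the main obstacle to state cleanly, all the more so because the algebra need not be commutative, so the ordered pair $(a,b)$ cannot be reordered to $(b,a)$ and the argument must be run with the given, unsorted factor indices.

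To finish, for each $r$ I would simply take $t_1^{(r)}, t_2^{(r)}$ to be the restrictions of $t_1, t_2$ to $\{s_1 + 1, \ldots, s_1 + \ldots + s_r\}$. The codomain $\{1, \ldots, s_1 + \ldots + s_r\}$ is respected because all factor indices satisfy $t_1(k), t_2(k) < k \le s_1 + \ldots + s_r$, while Items 3a and 3b are inherited verbatim from the global functions. This yields the required numbering together with the families $t_1^{(r)}, t_2^{(r)}$ for all $r$, completing the proof.
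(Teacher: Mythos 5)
Your proposal is correct and follows essentially the same route as the paper: number the basis level by level so that Items 1--2 are automatic, read the factor pairs off Item 4 of Definition \ref{def_graded}, handle cross-level comparisons via length additivity and monotonicity of $M$, and fix the within-block order by sorting the factor pairs. The only difference is cosmetic: you sort each block by the index-sum $t_1(k)+t_2(k)$ (using distinctness of the pairs), whereas the paper sorts the pairs lexicographically; both orderings make Item 3b hold by the same argument.
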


\begin{proof}
We will construct this numbering and respective functions using induction on $r$.

The base.  For $r=1$ set $e_0 = 1$ and $e_1,\ldots,e_{s_1}$ to be elements of $\SS'$ in an arbitrary order. This guarantees Item 1 and Items 2 and 3 for $r=1$ with $t_1^{(1)}$ and $t_2^{(1)}$ having empty domain.

The step. Assume that Items 2 and 3 hold for $r=1,\ldots,R-1$, with $2 \le R \le l(\SS)$. For $r=R$ there are two possibilities.

1. If $s_R = 0$, then $$W_R = W_{R-1} =  \{e_0,\ldots, e_{s_1 + \ldots +s_{R-1}} \} = \{e_0,\ldots, e_{s_1 + \ldots +s_{R-1}+s_R}. \}$$ The functions $t_1^{(R)}$ and $t_2^{(R)}$ can be defined as $t_1^{(R-1)}$ and $t_2^{(R-1)}$ with Item 3 holding by the induction hypothesis.

2. If $s_R >0$, then consider   all the elements of $W$ which are the words of length $R$ in $\SS'$ (note that all of them belong to $W_R$ by Item 3 of Lemma \ref{lem_subbase}). Since by Item 4 of Lemma \ref{lem_subbase} they are equal to products of two shorter, already numbered elements of $W$, we can establish a correspondence between them and pairs of indices: a word $w$ of length $R$ would correspond to $(j_1,j_2)$, where $e_{j_1} e_{j_2} = w$. Since all the words in $W$ are distinct, all the pairs in the resulting correspondence are also distinct. 

By sorting these pairs in lexicographic order we can continue the numbering of elements of $W$. We name the one with  the lexicographic  minimal corresponding pair $e_{s_1 + \ldots +s_{R-1}+1}$, the next $e_{s_1 + \ldots +s_{R-1}+2}$ and continue in this way. Item 2 will hold for $r=R$ as there are exactly $s_R$ words of length $R$ in $W$. Since $W_{R-1}$ is a basis of $L_{R-1}(\SS)$,  $W_{R}$ is a basis of $L_{R}(\SS)$ and $W_R \setminus W_{R-1}$ consists of irreducible words of length $R$. For Item 3 we define $t_1^{(R)}$ and $t_2^{(R)}$ as follows:

\begin{itemize}

\item For $k \in \{s_1+1,\ldots, s_1 + \ldots +s_{R-1} \}$ we define $t_i^{(R)}(k) =t_i^{(R-1)}(k)$, $i=1,2$. This guarantees Item 3 a,b for all indices $k,h_1,h_2$ less than or equal to $s_1 + \ldots +s_{R-1}$.

\item For $k \in \{s_1 + \ldots +s_{R-1}+1, \ldots, s_1 + \ldots +s_{R-1}+s_R  \}$ we define $t_i^{(R)}(k)$ using the aforementioned correspondence between elements of lengths $R$ and pairs of indices, with $t_1^{(R)}(k)$ being the first and $t_2^{(R)}(k)$ being the second index respectively. Item 3a holds by construction.

Item 3b requires us to check that $t_1^{(R)}(h_1) \le t_1^{(R)}(h_2)$  or $t_2^{(R)}(h_1) \le t_2^{(R)}(h_2)$ for $h_1,h_2$ such that $s_1 < h_1 < h_2 \le s_1 + \ldots +s_R$ with $h_2 \ge s_1 + \ldots +s_{R-1} +1$ (as lower $h_2$ are covered by the induction hypothesis). The proof of this property splits into two cases:

1. If $h_1 \le s_1 + \ldots +s_{R-1}$, then $m_{h_1} < R= m_{h_2}$. By reducing $e_{h_1} = e_{t_1^{(R)}(h_1)} \cdot e_{t_2^{(R)}(h_1)}$ and  $e_{h_2} = e_{t_1^{(R)}(h_2)} \cdot e_{t_2^{(R)}(h_2)}$ to an equation on lengths we get $m_{h_1} = m_{t_1^{(R)}(h_1)} + m_{t_2^{(R)}(h_1)}$ and  $m_{h_2} = m_{t_1^{(R)}(h_2)} + m_{t_2^{(R)}(h_2)}$. Since $x_1 + y_1 < x_2 + y_2$ means that $x_1 <x_2$ or $y_1 < y_2$, we can infer that $m_{t_1^{(R)}(h_1)} < m_{t_1^{(R)}(h_2)}$ or $m_{t_2^{(R)}(h_1)} < m_{t_2^{(R)}(h_2)}$. This provides the inequalities on indices due to $M$ being non-decreasing.

2. If $h_1 \ge s_1 + \ldots +s_{R-1} +1$ the inequalities hold due to the lexicographic sorting of the elements corresponding to pairs.

\end{itemize}

\end{proof}

\begin{proposition}\label{prop_goodbase}
Consider an algebra $\A$ of dimension $n \ge 2$ over a field $\F$ and its generating set $\SS$ such that $l(\SS) \ge 2$. Let $M=(m_0,\ldots,m_{n-1})$ be the characteristic sequence of $\SS$. There exists a basis $\{e_0,\ldots,e_{n-1}\}$ of the algebra $\A$ and functions $t_1,t_2$ from the set $\{k_1 +1,\ldots,n-1\}$ to $\{1,\ldots,n-1\}$, here ${k_1}$ is defined by $m_{k_1}=1$, $m_{k_1+1}>1$, which satisfy the following properties:

\begin{enumerate}
	\item  for all $  i \in \{0,\ldots,n-1 \}$ the element $e_i$ is a word in $\SS$ of length $m_i$.
	\item For $t_1$ and $t_2$ it holds that

		\begin{itemize} \item[a.] For $k$ such that $k_1 <k <n$ the equality  $m_{t_1(k)}+m_{t_2(k)}=m_k$ and inequalities $t_1(k), t_2(k) <k$ hold. \item[b.] For all $h_1,h_2$ such that $k_1<h_1<h_2<n$   at least one of the following two inequalities holds: $t_1(h_1)<t_1(h_2)$ or $t_2(h_1)<t_2(h_2)$. \end{itemize}

	\item  For  any $k$,   $k_1 <k <n$, it holds that $e_{t_1(k)} e_{t_2(k)} = e_k$.
	\item $ \L_1(\SS) = \langle \{ e_0, \ldots, e_{k_1} \} \rangle$.
\end{enumerate}

Particularly, this means that  $\{e_0,\ldots,e_{n-1}\}$ corresponds to the characteristic sequence $M$ equipped with the functions $t_1,t_2$.
\end{proposition}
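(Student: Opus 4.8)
The plan is to synthesize the two preceding lemmas, which already contain all the substance. First I would apply Lemma~\ref{lem_subbase} to the generating set $\SS$, producing an $\SS$-graded basis $W$ together with its associated generating subset $\SS'\subset\SS$ and the chain $W_0\subset W_1\subset\ldots\subset W_{l(\SS)}=W$; recall that $\L_k(\SS)=\L_k(\SS')$, so the two characteristic sequences coincide and all references to $m_i$ are unambiguous. Then I would apply Lemma~\ref{lem_meekbase} to $W$ to obtain a numbering $\{e_0,\ldots,e_{n-1}\}$ and, for each level $r$ with $1\le r\le l(\SS)$, the functions $t_1^{(r)},t_2^{(r)}$ on the set $\{s_1+1,\ldots,s_1+\ldots+s_r\}$, where $s_r=\dim\L_r(\SS)-\dim\L_{r-1}(\SS)$.

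The key observation is that $k_1=s_1$ (the number of ones in the characteristic sequence equals the number of non-unit length-one basis elements) and that $n-1=s_1+\ldots+s_{l(\SS)}$. Consequently the top-level functions $t_1:=t_1^{(l(\SS))}$ and $t_2:=t_2^{(l(\SS))}$ are defined precisely on $\{k_1+1,\ldots,n-1\}$ with values in $\{1,\ldots,n-1\}$, so they have exactly the required domain and codomain; these are the functions I would take. The hypothesis $l(\SS)\ge2$ forces $m_{n-1}=l(\SS)\ge2$, hence $k_1<n-1$, so this top level is nonempty and the index $k_1$ is well-defined by $m_{k_1}=1$, $m_{k_1+1}>1$.

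It then remains to read off the four properties. Property~1 is Item~2 of Lemma~\ref{lem_meekbase} (a word in $\SS'$ being a word in $\SS$ of the same length), and Property~4 follows since $W_1=\{e_0,\ldots,e_{k_1}\}$ is a basis of $\L_1(\SS)$ by Item~2 of Lemma~\ref{lem_subbase}. Property~3 is exactly Item~3a of Lemma~\ref{lem_meekbase} at level $r=l(\SS)$, and part~b of Property~2 is Item~3b at the same level. The only points needing a further line are the two assertions of part~a of Property~2. For the length identity $m_{t_1(k)}+m_{t_2(k)}=m_k$ I would take word-lengths in the equality $e_{t_1(k)}e_{t_2(k)}=e_k$ from Property~3: the product of words of lengths $m_{t_1(k)}$ and $m_{t_2(k)}$ is a word of length $m_{t_1(k)}+m_{t_2(k)}$, and it equals $e_k$, a word of length $m_k$. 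For the inequalities $t_1(k),t_2(k)<k$ I would note that both factors are non-unit by Item~4 of Lemma~\ref{lem_subbase}, so $m_{t_i(k)}\le m_k-1<m_k$; since the numbering lists basis elements in non-decreasing order of length, any index of strictly smaller length-value precedes $k$, giving $t_i(k)<k$. No single step is a genuine obstacle, as the content lives in the two lemmas; the only care required is the index bookkeeping that matches the top-level functions $t_1^{(l(\SS))},t_2^{(l(\SS))}$ to the domain $\{k_1+1,\ldots,n-1\}$.
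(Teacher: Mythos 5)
Your proof is correct and takes essentially the same route as the paper's: apply Lemma~\ref{lem_subbase} to produce an $\SS$-graded basis, number it via Lemma~\ref{lem_meekbase}, and take $t_i = t_i^{(l(\SS))}$ as the required functions, checking the four properties against the two lemmas. You even fill in a detail the paper's proof leaves implicit, namely deriving Property~2a (the length identity $m_{t_1(k)}+m_{t_2(k)}=m_k$ and the inequalities $t_1(k),t_2(k)<k$) by taking word lengths in $e_{t_1(k)}e_{t_2(k)}=e_k$ and using monotonicity of $M$, where the paper simply states that Items~2 and~3 of Lemma~\ref{lem_meekbase} guarantee Items~2 and~3 of the proposition.
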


\begin{proof}

Consider an $\SS$-graded basis $W$, constructed in Lemma \ref{lem_subbase} and numbered in Lemma \ref{lem_meekbase} with $t_1 = t_1^{(l(\SS))}$ and $t_2 = t_2^{(l(\SS))}$ provided by Lemma \ref{lem_meekbase}. We will demonstrate that it satisfies Items 1-4.

\begin{itemize}

\item  $W_1$, which is a basis of $\L_1(\SS)$, coincides with $\{e_0,\ldots,e_{s_1}\}$. As $e_0 = 1$ and $s_1 = k_1$, this implies Item 4 and Item 1 for $i=0,\ldots,s_1$.

\item For any $  j \in \{s_1+1,\ldots,n-1 \}$ there exists $R$  such that $s_1 + \ldots +s_{R-1} < j \le s_1 + \ldots + s_{R}$. We have $m_j = R$ and by Item 2 of Lemma \ref{lem_meekbase} $e_j$ has length $m_j$, which demonstrates the desired property 1 for all remaining $j$.

\item Items 2 and 3 of Lemma \ref{lem_meekbase} for $r=l(\SS)$ guarantee Items 2 and 3 of the proposition.

\end{itemize}

\end{proof}

\section{Algebras of maximal length}

With the results of the previous section we are ready to characterize algebras of maximal length for a given dimension using their basis.

\begin{proposition}\label{prop_powtwo} 
Consider a unital algebra $\A$ over a field $\F$ of dimension $n > 2$ and maximal possible length. There exists a generating set $\SS$ of $\A$ such that its characteristic sequence is $(0,1,2,4,\ldots,2^{n-2})$.
\end{proposition}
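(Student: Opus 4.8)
The plan is to show that \emph{every} generating set of $\A$ of maximal length already carries the required characteristic sequence, so that the asserted existence is immediate. Since $\A$ has maximal possible length, Proposition~\ref{prop_max} gives $l(\A) = 2^{n-2}$, and by Definition~\ref{alg_len} the length is a maximum over generating sets; hence there is a generating set $\SS$ with $l(\SS) = l(\A) = 2^{n-2}$. Fix such an $\SS$ and let $(m_0, m_1, \ldots, m_{n-1})$ be its characteristic sequence. By Lemma~\ref{N=n-1} this sequence has exactly $n$ terms and $m_{n-1} = l(\SS) = 2^{n-2}$. It therefore suffices to prove $m_h = 2^{h-1}$ for every $h$ with $1 \le h \le n-1$, since $m_0 = 0$ by definition; this is precisely the sequence $(0,1,2,4,\ldots,2^{n-2})$.

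First I would establish $m_h = 2^{h-1}$ by downward induction on $h$, running from $h = n-1$ down to $h = 1$. The base case $m_{n-1} = 2^{n-2}$ is recorded above. For the inductive step, assume $m_{h+1} = 2^h$ with $h \ge 1$, so that $m_{h+1} \ge 2$. Then Proposition~\ref{prop_add} applies to the index $h+1$ and produces indices $0 < t_1 \le t_2 < h+1$ with $m_{h+1} = m_{t_1} + m_{t_2}$. Because $t_1, t_2 \le h$ and the characteristic sequence is non-decreasing (it is proto-characteristic by Proposition~\ref{pr_full}), we have $m_{t_1}, m_{t_2} \le m_h$, whence $2^h = m_{h+1} \le 2 m_h$, i.e. $m_h \ge 2^{h-1}$. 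Combining this with the upper bound $m_h \le 2^{h-1}$ from Proposition~\ref{prop_bound} forces $m_h = 2^{h-1}$, completing the step. At $h = 1$ the decomposition of $m_2 = 2$ necessarily has $t_1 = t_2 = 1$, returning $m_1 = 1$, as expected.

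The attractive feature of this route is that it avoids splitting into small dimensions: the two ingredients, the ceiling $m_h \le 2^{h-1}$ and the additive decomposition $m_{h+1} = m_{t_1} + m_{t_2} \le 2 m_h$, both hold for all $n > 2$, so a single downward induction covers the entire range of the proposition. For $n > 4$ one could alternatively quote Proposition~\ref{pr_2}, Item~2, directly at $h = n-1$, but then the cases $n = 3, 4$ would require separate verification, which the induction sidesteps. I do not expect a genuine obstacle here; the only point demanding care is the range of the induction, specifically checking that $m_{h+1} \ge 2$ holds throughout so that Proposition~\ref{prop_add} is legitimately invoked — this is guaranteed precisely because $m_{h+1} = 2^h \ge 2$ for every $h \ge 1$. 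Once the sequence is pinned down, the conclusion follows, since the generating set $\SS$ we selected realizes exactly $(0,1,2,4,\ldots,2^{n-2})$.
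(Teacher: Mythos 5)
Your proof is correct. It takes a genuinely different (though closely related) route from the paper: the paper's own proof is a two-line reduction that fixes a generating set $\SS$ with $l(\SS)=2^{n-2}$, notes $m_{n-1}=2^{n-2}$, and then simply quotes Proposition~\ref{pr_2}, Item~2, at $h=n-1$ to conclude $m_h=2^{h-1}$ for all $h>0$. You instead run a self-contained downward induction, combining the additive decomposition of Proposition~\ref{prop_add} with monotonicity to get $2^h=m_{h+1}=m_{t_1}+m_{t_2}\le 2m_h$, and then squeezing against the ceiling $m_h\le 2^{h-1}$ of Proposition~\ref{prop_bound}. The ingredients are essentially the ones underlying Proposition~\ref{pr_2}, but your packaging buys something concrete: Proposition~\ref{pr_2} is stated only for $n>4$, so the paper's citation formally leaves the cases $n=3,4$ uncovered (they are easy, but unaddressed), whereas every statement you invoke (Lemma~\ref{N=n-1}, Propositions~\ref{prop_add} and~\ref{prop_bound}) is valid for all $n>2$, so your argument proves the proposition uniformly on its stated range --- a point you correctly identify yourself. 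The only cost is a few extra lines where the paper has one, and a mild redundancy at $h=1$, where $m_1=1$ holds by definition anyway.
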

\begin{proof}
By Proposition \ref{prop_max} the maximal possible length of a unital algebra with dimension $n$ is $2^{n-2}$. Consider a generating set $\SS$ of such an algebra which has exactly this length. The last element of its characteristic sequence $M = (m_0,\ldots,m_{n-1})$ is equal to $l(\SS)=2^{n-2}$. By Proposition \ref{pr_2} Item 2 this means that all previous $m_h$ for $h>0$ are equal to $2^{h-1}$. Finally, $m_0$ is always equal to zero.
\end{proof}

\begin{definition}
A basis  $\{e_0, e_1, \ldots, e_{n-1}\}$ of an algebra $\A$ is called {\em long}, if $e_0 =1$ and  $e_i^2 = e_{i+1}$ for $i=1,\ldots, n-2$.
\end{definition}

\begin{proposition}
A unital $\F$-algebra $\A$ of dimension $n>2$ which has a long basis $E = \{e_0, e_1, \ldots, e_{n-1} \}$ such that $e_p e_q \in \langle e_0, \ldots, e_{\max (p,q)} \rangle$ for $p \neq q$ also has length $l(\A)=2^{n-2}$.
\end{proposition}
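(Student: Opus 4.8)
The plan is to exhibit a single generating set whose length equals $2^{n-2}$ and then match it against the universal upper bound. Since Proposition \ref{prop_max} gives $l(\A)\le 2^{n-2}$ and $l(\A)=\max\{l(\SS):\L(\SS)=\A\}$, it suffices to produce one generating set $\SS$ with $l(\SS)=2^{n-2}$. The natural candidate is the single generator $\SS=\{e_1\}$. Iterating the long-basis relation $e_i^2=e_{i+1}$ shows that $e_k=(\cdots((e_1^2)^2)\cdots)^2$ is a word in $e_1$ of length $2^{k-1}$; together with $e_0=1$ this proves that $\{e_1\}$ generates $\A$ and that $e_k\in\L_{2^{k-1}}(\SS)$ for every $k$. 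In particular $e_{n-1}\in\L_{2^{n-2}}(\SS)$, so $\L_{2^{n-2}}(\SS)=\A$ and $l(\SS)\le 2^{n-2}$.

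The heart of the argument is the reverse inequality, for which I would prove a bound on the highest basis index a short word can reach. For $\ell\ge 1$ let $J(\ell)$ denote the largest integer $j$ with $2^{j-1}\le \ell$. The claim is that every word $w$ in $\SS$ of length $\ell$, with $1\le\ell\le 2^{n-2}-1$, lies in $\langle e_0,\ldots,e_{J(\ell)}\rangle$. I would prove this by induction on $\ell$. The base $\ell=1$ is immediate since $J(1)=1$ and $e_1\in\langle e_0,e_1\rangle$. For the step, write $w=w_1w_2$ according to its outermost multiplication, with $l(w_i)=\ell_i\ge 1$ and $\ell_1+\ell_2=\ell$; since $\ell_i\le\ell-1$, the induction hypothesis gives $w_i\in\langle e_0,\ldots,e_{J(\ell_i)}\rangle$, and expanding $w=w_1w_2$ by bilinearity writes it as a combination of products $e_pe_q$ with $p\le J(\ell_1)$ and $q\le J(\ell_2)$.

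These products are controlled exactly by the two hypotheses of the statement. When $p\neq q$ (including the unit cases $p=0$ or $q=0$, where $e_0e_q=e_q$), the assumption $e_pe_q\in\langle e_0,\ldots,e_{\max(p,q)}\rangle$ keeps the index at most $\max(J(\ell_1),J(\ell_2))$; when $p=q\ge 1$ the long-basis relation $e_p^2=e_{p+1}$ raises the index to at most $\min(J(\ell_1),J(\ell_2))+1$. Writing $a=J(\ell_1)\le b=J(\ell_2)$, the reachable index is therefore at most $\max(b,a+1)$, and I would check $\max(b,a+1)\le J(\ell)$ in two cases. If $a<b$, then $\max(b,a+1)=b$, and $2^{b-1}\le\ell_2\le\ell$ gives $J(\ell)\ge b$. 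If $a=b$, then $\max(b,a+1)=b+1$, and $\ell=\ell_1+\ell_2\ge 2^{b-1}+2^{b-1}=2^{b}$ gives $J(\ell)\ge b+1$. This second case is the real obstacle of the whole proof: it encodes the fact that producing a higher basis element can only be done by squaring, which forces the word length to double. This is precisely what yields the sparse characteristic sequence $(0,1,2,4,\ldots,2^{n-2})$ rather than a denser one. I note that for $\ell\le 2^{n-2}-1$ every subword has length $<2^{n-2}$, so all indices occurring satisfy $p,q\le n-2$ and both relations used are genuinely available from the given data.

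Finishing is then routine. Since $J(2^{n-2}-1)=n-2$, the claim gives $\L_{2^{n-2}-1}(\SS)\subseteq\langle e_0,\ldots,e_{n-2}\rangle\subsetneq\A$; hence $e_{n-1}\notin\L_{2^{n-2}-1}(\SS)$, so $e_{n-1}$ is an irreducible word of length $2^{n-2}$ and $\L_{2^{n-2}-1}(\SS)\neq\A$. Combining this with $\L_{2^{n-2}}(\SS)=\A$ yields $l(\SS)=2^{n-2}$, and together with the bound $l(\A)\le 2^{n-2}$ from Proposition \ref{prop_max} we conclude $l(\A)=2^{n-2}$.
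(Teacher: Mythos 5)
Your proof is correct, and it reaches the conclusion by a genuinely different organization than the paper's, although both start identically: take the single generating set $\{e_1\}$, observe that iterated squaring gives $e_k$ as a word of length $2^{k-1}$ so that $\L_{2^{n-2}}(\{e_1\})=\A$, and finish with the upper bound of Proposition \ref{prop_max}. The paper proves the reverse inequality by inducting on the basis index $j$ and classifying \emph{irreducible} words: using Lemma \ref{lem_1} (an irreducible word of length $>1$ is a product of two irreducible words), it shows there are no irreducible words of length strictly between $2^{j-2}$ and $2^{j-1}$, and that the unique irreducible word of length $2^{j-1}$ is $w_j=w_{j-1}^2=e_j$; the case analysis forcing the longer factor to be $e_{J-1}$ is where the hypothesis $e_pe_q\in\langle e_0,\ldots,e_{\max(p,q)}\rangle$ enters. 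You instead induct on the word length $\ell$ and prove a span bound, $\L$-theoretic rather than word-theoretic: every word of length $\ell\le 2^{n-2}-1$ lies in $\langle e_0,\ldots,e_{J(\ell)}\rangle$, by splitting at the outermost multiplication and expanding bilinearly, with the two hypotheses controlling the mixed products and the squares respectively. This dispenses with the irreducibility machinery (Lemma \ref{lem_1} and the uniqueness bookkeeping) entirely, and your explicit check that for $\ell\le 2^{n-2}-1$ no factor can force the unavailable relation $e_{n-1}^2$ is sound. The trade-off: your invariant $J(\ell)$ isolates the doubling phenomenon (raising the top index requires squaring, hence doubling the length -- your case $a=b$, the paper's case 1.2) in a more elementary and self-contained way, while the paper's classification yields strictly more information as a byproduct, namely that the characteristic sequence of $\{e_1\}$ is exactly $(0,1,2,4,\ldots,2^{n-2})$ with a single irreducible word of each length, which is the structural picture the surrounding Sections 5--6 rely on.
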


\begin{proof}

Consider the generating set $\{e_1\}$ of $\A$.

We  demonstrate using induction on $j$ that for each $j \in \{1,\ldots,n-1 \}$ the following two statements hold:

\begin{itemize} 

\item There are no irreducible words in $\A$ of lengths $2^{j-2}+1,\ldots,2^{j-1}-1$ for $j > 1$;

\item There exists a single irreducible word  $w_j\in \A$ of length $2^{j-1}$, which is equal to $e_j$ as an element of $\A$ and to $w^2_{j-1}$ as a word, with $w_1$ being~$e_1$.

\end{itemize}

The base. For $j=1$ the statement holds as $w_1=e_1$ is an irreducible word of length $1$ in $\{e_1\}$ with no other possible words of length 1. For $j=2$ the statement holds as $w_2 = w_1^2 = e_1^2 = e_2$ is an irreducible word of length $2$ in $\{e_1\}$ with no other possible words of length 2 and the set $\{2^{j-2}+1,\ldots,2^{j-1}-1\}$ is empty for $j =2$.

The step. Assume the statement holds for $j=1,\ldots,J-1$ with $3 \le J \le n-1$. 

Assume that $v$ is an irreducible word of length $l \in \{2^{J-2}+1, \ldots, 2^{J-1} \}$. Since $J \ge 3$, $l \ge 2$. This means that $v=v' \cdot v''$, where $v'$ and $v''$ are irreducible words of lesser positive lengths, $l'$ and $l''$. As $l' + l'' = l$, the larger one of them is greater than or equal to $l/2$.

1. $l' \ge l''$. Since $l/2 \le l' < l$, this means that $l'$ belongs to $ \{2^{J-3}+1, \ldots, 2^{J-2}\}$. Since $v'$ is irreducible, by the induction hypothesis this means that $v' =w_{J-1}= e_{J-1}$ and $l' = 2^{J-2}$. 

1.1. If $l'' < 2^{J-2}$, then by the induction hypothesis  $v''$ must be equal to $w_r$ with $r\in\{1,\ldots,J-2\}$. However, this would mean $v=w_{J-1} w_r = e_{J-1} e_r \in \langle e_1, \ldots, e_{J-1} \rangle = \L_{2^{J-2}}(\{e_1\})$, with $l>2^{J-2}$. This contradicts to the irreducibility of $v$. Thus, there are no irreducible words of lengths between $2^{J-2} + 1$ and $2^{J-1} -1$. From this follows $\L_{2^{J-1}-1} ( \{e_1\}) = L_{2^{J-2}} ( \{e_1\})$.

1.2. If $l'' = 2^{J-2}$, then $v'' = w_{J-1}$ as well and $v = w_{J-1}^2=:w_J$. This word is indeed irreducible as $w_J = e_J \not\in \L_{2^{J-1}-1} ( \{e_1\}) = L_{2^{J-2}} ( \{e_1\})  = \langle e_0,\ldots, e_{J-1} \rangle$.

2. A similar argument works in the case $l'' \ge l'$. 

From this statement for $j=n-1$ it follows that there exists an irreducible word in $\{e_1\}$ of length $2^{n-2}$, which means $2^{n-2} \le l(\{e_1\}) \le l(\A)$. However, by Proposition \ref{prop_max}, $l(\A) \le 2^{n-2}$, which allows us to conclude that $l(\A) = 2^{n-2}$.

\end{proof}

 \begin{proposition}
A unital $\F$-algebra $\A$ of dimension $n>2$ which has maximal length $2^ {n-2}$ also has a long basis $E = \{e_0, e_1, \ldots, e_{n-1} \}$ such that $e_p e_q \in \langle e_0, \ldots, e_{\max (p,q)} \rangle$ for $p \neq q$.
\end{proposition}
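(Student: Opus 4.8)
The plan is to combine Proposition \ref{prop_powtwo}, which supplies a generating set with a prescribed characteristic sequence, with Proposition \ref{prop_goodbase}, which realizes such a sequence by an explicit basis, and then to pin down the structure constants by a purely arithmetic argument. First I would invoke Proposition \ref{prop_powtwo} to obtain a generating set $\SS$ of $\A$ whose characteristic sequence is $M = (0,1,2,4,\ldots,2^{n-2})$, so that $m_0 = 0$ and $m_i = 2^{i-1}$ for $1 \le i \le n-1$. Since $n > 2$ gives $l(\SS) = m_{n-1} = 2^{n-2} \ge 2$, Proposition \ref{prop_goodbase} applies and yields a basis $\{e_0, \ldots, e_{n-1}\}$ together with functions $t_1, t_2$ such that $e_i$ is a word of length $m_i$, $\L_1(\SS) = \langle e_0, e_1 \rangle$ (here $k_1 = 1$), and $e_{t_1(k)} e_{t_2(k)} = e_k$ with $m_{t_1(k)} + m_{t_2(k)} = m_k$ for $2 \le k \le n-1$. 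In particular $e_0$ is the unique word of length $0$, namely $1$.

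Next I would determine $t_1, t_2$ explicitly. For $2 \le k \le n-1$ the identity $m_{t_1(k)} + m_{t_2(k)} = 2^{k-1}$ together with $t_1(k), t_2(k) < k$ forces both indices to be nonzero, since otherwise one summand would equal $2^{k-1}$ and the corresponding index would have to be $k$. Hence $2^{t_1(k)-1} + 2^{t_2(k)-1} = 2^{k-1}$ with both exponents at most $k-2$, whose only solution is $t_1(k) = t_2(k) = k-1$. Therefore $e_{k-1}^2 = e_k$ for all such $k$, that is $e_i^2 = e_{i+1}$ for $1 \le i \le n-2$, and together with $e_0 = 1$ this shows that $E = \{e_0, \ldots, e_{n-1}\}$ is a long basis.

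It remains to verify $e_p e_q \in \langle e_0, \ldots, e_{\max(p,q)} \rangle$ for $p \neq q$; this is the step I expect to carry the real content. The cases $p = 0$ or $q = 0$ are immediate from unitality, so assume $1 \le p < q$. I would first observe that, because the only values occurring in $M$ are $0$ and the powers $2^{i-1}$, no value strictly between $2^{q-1}$ and $2^q$ appears in $M$; by the construction of the characteristic sequence this means $\dim \L_k(\SS) = \dim \L_{k-1}(\SS)$ for every such $k$, whence $\L_{2^{q-1}}(\SS) = \L_{2^{q-1}+1}(\SS) = \cdots = \L_{2^q - 1}(\SS)$, a subspace of dimension $q+1$. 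Since each $e_i$ with $i \le q$ is a word of length $m_i \le 2^{q-1}$ and the $e_i$ are linearly independent, this common subspace equals $\langle e_0, \ldots, e_q \rangle$.

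Finally, $e_p e_q$ is a word of length $m_p + m_q = 2^{p-1} + 2^{q-1}$; using $p \le q-1$ this is at most $3 \cdot 2^{q-2} < 2^q$, so $e_p e_q \in \L_{2^q - 1}(\SS) = \langle e_0, \ldots, e_q \rangle = \langle e_0, \ldots, e_{\max(p,q)} \rangle$, which completes the argument. The main obstacle is precisely this length bookkeeping: one must notice that, although the product has length exceeding $m_q = 2^{q-1}$, the absence of new irreducible words in the open range $(2^{q-1}, 2^q)$ keeps the product inside the stabilized span $\langle e_0, \ldots, e_q \rangle$, so that no basis vector of index larger than $\max(p,q)$ can occur in its expansion.
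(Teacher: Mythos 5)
Your proof is correct, and its first half follows the paper exactly: invoke Proposition \ref{prop_powtwo} to get a generating set with characteristic sequence $(0,1,2,4,\ldots,2^{n-2})$, observe that property 2a forces $t_1(k)=t_2(k)=k-1$ (the same power-of-two argument the paper uses), and apply Proposition \ref{prop_goodbase} to obtain a long basis. (The paper inserts one step you skip --- it first passes to a singleton generating set $\SS=\{e_1\}$ via Lemma \ref{lem_redux} --- but that reduction is not required by Proposition \ref{prop_goodbase}; the paper needs it only because its final argument is phrased in terms of words in $\{e_1\}$.) Where you genuinely diverge is the containment $e_p e_q \in \langle e_0, \ldots, e_{\max (p,q)} \rangle$. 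The paper argues by contradiction: if $e_pe_q$ had a nonzero coefficient at some $e_r$ with $r>\max(p,q)$, one could solve for $e_r$ and place it in $\L_{\max(2^{r-2},\,2^{p-1}+2^{q-1})}(\{e_1\})$, contradicting the irreducibility of $e_r$, which the paper establishes separately using Lemma \ref{lem_chseq}. You instead argue directly: since no integer strictly between $2^{q-1}$ and $2^q$ occurs in the characteristic sequence, the chain $\L_k(\SS)$ stabilizes on the range $[2^{q-1},2^q-1]$ at the $(q+1)$-dimensional space $\langle e_0,\ldots,e_q\rangle$, and the word $e_pe_q$ has length $2^{p-1}+2^{q-1}\le 3\cdot 2^{q-2}<2^q$, so it lies in that space. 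The two arguments rest on the same dimension count --- the paper proves your stabilization fact inside its irreducibility step --- but yours assembles it into a direct inclusion rather than a reductio, which makes the ending shorter and dispenses with both the singleton reduction and the explicit irreducibility of the $e_i$. What the paper's version buys in exchange is the record that each $e_i$ is an irreducible word, a fact of some independent interest; your version is the more economical route to the stated proposition.
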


\begin{proof}

Consider a generating set $\bar{\SS}$ of $\A$ such that $l(\bar{\SS}) = l(\A)$. By Proposition \ref{prop_powtwo}, its characteristic sequence is $(0,1,2,4,\ldots,2^{n-2})$. Since there is only one element in the characteristic sequence equal to 1, we have $\dim L_1 (\bar{\SS}) - \dim L_0 (\bar{\SS}) = 1$. This means that we can find a subset $\SS \subset \bar{\SS}$ such that $|\SS| =1$ and $\langle \SS \cup \{1\} \rangle = \langle \bar{\SS} \cup \{1\} \rangle$. By Lemma \ref{lem_redux} $\SS$  is also a generating set of $\A$ of the same length and with the same characteristic sequence, since $\L_k(\SS) = \L_k(\bar{\SS})$ for all natural $k$.   

Note that there is only one way for $(m_0,\ldots,m_{n-1}) = (0,\ldots,2^{n-2})$ to define functions $t_1,t_2$  from the set $\{2,\ldots,n-1\}$ to $\{1,\ldots,n-1\}$ so they would satisfy

\begin{itemize} \item[a.] For $k$ such that $1 <k <n$ the equality  $m_{t_1(k)}+m_{t_2(k)}=m_k$ and inequalities $t_1(k), t_2(k) <k$ hold. \item[b.] For all $h_1,h_2$ such that $1<h_1<h_2<n$   at least one of the following two inequalities holds: $t_1(h_1)<t_1(h_2)$ or $t_2(h_1)<t_2(h_2)$. \end{itemize}

Namely, we must set $t_1(h)=t_2(h)=h-1$, as otherwise $m_{t_1(h)} + m_{t_2(h)} = 2^{t_1(h) -1} + 2^{t_2(h)-1}$ is strictly less than $m_h = 2^{h-1}$. 

This means that by Proposition \ref{prop_goodbase} applied to the generating set $\SS$ there is a basis $\{e_0, \ldots, e_{n-1}\}$ such that $e_0 =1$, $\SS = \{e_1\}$ (as $e_1$ is a word of length 1 in singleton $\SS$), $e_i$ is a word of length $m_i = 2^{i-1}$ for $i=1,\ldots, n-1$ in $\SS$ and $e_j ^2 = e_{j+1}$ for $j=1,\ldots, n-2$ as $t_1(j+1) = t_2(j+1)=j$. In particular, this basis is long.

Additionally, $e_i$ is an irreducible word. For $i=0,1$ this is evident. To prove this for $i \ge 2$, note that the dimension of $\L_{2^{i-2}}(\SS)$ is equal to the number of elements of the characteristic sequence less than or equal to  $2^{i-2}$, i.e.  $i$, which means that $i$ linearly independent words $e_0, \ldots, e_{i-1}$ which belong to  $\L_{2^{i-2}}(\SS)$ form its basis. Also by Lemma \ref{lem_chseq} there are no irreducible words of lengths $2^{i-2}+1, \ldots, 2^{i-1}-1$ as there are no such elements in the characteristic sequence of $\SS$. This allows us to conclude that $\L_{2^{i-2}}(\SS) = \L_{2^{i-1}-1}(\SS) = \langle e_0,\ldots, e_{i-1} \rangle$, from which follows $e_i \not\in  \L_{2^{i-1}-1}(\SS) $.

To demonstrate that $e_p e_q \in \langle e_0, \ldots, e_{\max (p,q)} \rangle$ for $p \neq q$, assume the opposite. Let there be $p,q$ with $p \neq q$ such that  $e_p e_q = f_r e_r + \ldots +f_0 e_0$, where $f_r \neq 0$ and $r > \max(p,q)$. The word $e_r$ of length $2^{r-1}$ is irreducible. However we can represent $e_r = c e_p e_q + s$, where $c = \frac{1}{f_r}$ and $s \in \langle \{e_0, \ldots, e_{r-1} \} \rangle$. This implies  that $e_r \in \L_{\max (2^{r-2}, 2^{p-1} +2^{q-1}) } (\{e_1\})$. Since  $2^{r-2}< 2^{r-1}$ and $2^{p-1} +2^{q-1}<2^{r-1}$, the word $e_r$ is reducible. This is a contradiction, which means that the initial assumption is incorrect.

\end{proof}

The following is immediate from the two propositions above.

\begin{theorem}
A unital $\F$-algebra $\A$ of dimension $n>2$ has maximal length $2^ {n-2}$ if and only if it has a long basis $E = \{e_0, e_1, \ldots, e_{n-1} \}$ such that $e_p e_q \in \langle e_0, \ldots, e_{\max (p,q)} \rangle$ for $p \neq q$.

\end{theorem}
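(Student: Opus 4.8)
The plan is to prove the biconditional by treating its two implications separately, observing that each direction coincides exactly with one of the two propositions immediately preceding the theorem. Since both of those propositions have already been established, no fundamentally new argument is required beyond assembling them into a single equivalence; the phrase ``immediate from the two propositions above'' is warranted.

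First I would dispatch the ``if'' direction. Assuming that $\A$ carries a long basis $E = \{e_0, e_1, \ldots, e_{n-1}\}$ with $e_p e_q \in \langle e_0, \ldots, e_{\max(p,q)} \rangle$ whenever $p \neq q$, I would invoke the first of the two preceding propositions verbatim to conclude $l(\A) = 2^{n-2}$. The mechanism there is that the singleton generating set $\{e_1\}$ produces a chain of irreducible words $w_j = w_{j-1}^2 = e_j$ of lengths $1, 2, 4, \ldots, 2^{n-2}$ with no irreducible words of intermediate length, so that $\{e_1\}$ already realizes the maximal value permitted by Proposition \ref{prop_max}. Then I would handle the ``only if'' direction by applying the second preceding proposition: if $\dim \A = n$ and $l(\A) = 2^{n-2}$, that proposition directly yields a long basis satisfying the stated off-diagonal containment condition. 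Its substance is that maximal length forces the characteristic sequence of a length-realizing generating set to be $(0,1,2,4,\ldots,2^{n-2})$ via Proposition \ref{prop_powtwo}, that this sequence rigidly pins the functions to $t_1(k)=t_2(k)=k-1$, and that the corresponding basis furnished by Proposition \ref{prop_goodbase} is then long with $e_i^2 = e_{i+1}$.

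Combining the two implications gives the equivalence and completes the proof. I do not anticipate any genuine obstacle at the level of the theorem itself, since all the difficulty has been absorbed into the two supporting propositions. The harder of these is the ``only if'' direction, where the passage from the numerical rigidity of the characteristic sequence to an explicit multiplication table requires verifying that no product $e_p e_q$ with $p \neq q$ can involve a basis vector of index larger than $\max(p,q)$; such a term would force the high-length irreducible word $e_r$ into a span $\L_{\max(2^{r-2},\,2^{p-1}+2^{q-1})}(\{e_1\})$ of strictly smaller index, contradicting its irreducibility.
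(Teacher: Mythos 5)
Your proof is correct and takes exactly the paper's approach: the paper also obtains the theorem immediately by combining the two preceding propositions, one for each direction of the biconditional. Your summary of the internal mechanisms of those propositions is accurate but not needed for the theorem itself.
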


\begin{corollary}
For every unital $\F$-algebra $\A$ with $\dim(\A)=n>2$ and    $l(\A)=2^ {n-2}$ there exist elements $f^{(j)}_{p,q}$ and $f^{(0)},\ldots,f^{(n-1)}$ in $\F$, $p,q \in \{1,\ldots,n-1\},$ $p\neq q$, $j =0,\ldots, \max(p,q)$ such that $\A$ is isomorphic to an algebra defined by generators $x_0,\ldots,x_{n-1}$ and relations
$$x_0 =1, \ x_i^2 = x_{i+1}, \ i =1,\ldots, n-2,$$ $$ x_p x_q = \sum\limits_{j=0}^{\max (p,q)}  f^{(j)}_{p,q}  x_j , \qquad x_{n-1}^2 = \sum\limits_{i=0}^{n-1} f^{(i)} x_i.$$
\end{corollary}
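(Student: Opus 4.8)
The plan is to derive this corollary directly from the preceding theorem, which already guarantees the existence of a long basis $E=\{e_0,e_1,\ldots,e_{n-1}\}$ with the property $e_pe_q\in\langle e_0,\ldots,e_{\max(p,q)}\rangle$ for $p\neq q$. The entire content of the corollary is to rewrite this structural characterization in terms of generators and relations, so the proof is essentially bookkeeping: I would show that the basis multiplication table \emph{is} a presentation of the algebra.

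First I would invoke the theorem to fix a long basis $E$ of $\A$ with the stated property. By the definition of a long basis we have $e_0=1$ and $e_i^2=e_{i+1}$ for $i=1,\ldots,n-2$, which yields the first row of relations. Next, since each product $e_pe_q$ with $p\neq q$ lies in $\langle e_0,\ldots,e_{\max(p,q)}\rangle$, and $E$ is a basis, there exist unique scalars $f^{(j)}_{p,q}\in\F$ for $j=0,\ldots,\max(p,q)$ with $e_pe_q=\sum_{j=0}^{\max(p,q)}f^{(j)}_{p,q}e_j$. Similarly, the single remaining product not covered by the long-basis relations is $e_{n-1}^2$, which lies in $\A=\langle e_0,\ldots,e_{n-1}\rangle$ and hence equals $\sum_{i=0}^{n-1}f^{(i)}e_i$ for suitable scalars $f^{(i)}\in\F$. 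These choices define exactly the scalars appearing in the statement.

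The remaining task is to argue that the algebra $\B$ presented by the generators $x_0,\ldots,x_{n-1}$ and the listed relations is isomorphic to $\A$. I would send $x_i\mapsto e_i$; this is well defined because the $e_i$ satisfy all the imposed relations by construction, so it extends to an algebra homomorphism $\varphi\colon\B\to\A$. Surjectivity is clear since the $e_i$ span $\A$. For injectivity the key observation is that the relations already express \emph{every} product of two basis generators $x_px_q$ (the cases $p=q<n-1$ via the long-basis squares, the case $p=q=n-1$ via the last relation, and the cases $p\neq q$ via the middle relations) as an $\F$-linear combination of $x_0,\ldots,x_{n-1}$. Hence $\B$ is spanned as an $\F$-vector space by $\{x_0,\ldots,x_{n-1}\}$, so $\dim_\F\B\le n=\dim_\F\A$; combined with surjectivity of $\varphi$ this forces $\varphi$ to be an isomorphism.

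The main obstacle, and the only point requiring care, is the claim that the given relations suffice to reduce an arbitrary product in $\B$ to a linear combination of the generators, i.e.\ that $\dim_\F\B\le n$. One must check that the relations cover all pairwise products of the $x_i$ without omission or inconsistency: the squares $x_i^2$ for $1\le i\le n-2$ are handled by $x_i^2=x_{i+1}$, the top square $x_{n-1}^2$ by the final relation, and all mixed products $x_px_q$ with $p\neq q$ by the middle relations, while $x_0=1$ acts as a unit so products involving $x_0$ reduce trivially. Since multiplication is only binary and need not be associative, no higher-order consistency conditions arise, and this spanning argument goes through cleanly, giving the desired isomorphism.
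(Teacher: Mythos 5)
Your proposal is correct and follows essentially the same route as the paper: invoke the preceding theorem to obtain the long basis, read off the structure constants $f^{(j)}_{p,q}$ and $f^{(i)}$ by expanding $e_pe_q$ and $e_{n-1}^2$ in that basis, and map $x_i\mapsto e_i$. The paper states the final isomorphism in one line, whereas you supply the (correct) verification it leaves implicit --- well-definedness of the homomorphism, surjectivity, and the dimension count $\dim_\F\B\le n$ via the spanning argument --- which is a welcome but not divergent elaboration.
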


\begin{proof}
By the theorem above, $\A$  has a long basis $E = \{e_0, e_1, \ldots, e_{n-1} \}$ such that $e_p e_q \in \langle e_0, \ldots, e_{\max (p,q)} \rangle$ for $p \neq q$. Using this basis as well as the fact that $e_{n-1}^2 \in \A$ we can find the elements  $f^{(j)}_{p,q}$ and $f^{(0)},\ldots,f^{(n-1)}\in \F$ such  that

$$e_p e_q = \sum\limits_{j=0}^{\max (p,q)}  f^{(j)}_{p,q} e_j , \qquad e_{n-1}^2 = \sum\limits_{i=0}^{n-1} f^{(i)} e_i.$$

This  allows to construct the desired isomorphism simply by mapping $e_i$ onto~$x_i$.
\end{proof}


\begin{thebibliography}{100}

\bibitem{AlIk} Yu.A. Al'pin, Kh.D. Ikramov,  {\em On the unitary similarity of matrix families}, Math. Notes {\bf 74}:5-6 (2003) 772-782

\bibitem{GutK19} A.E.~Guterman,  D.K.~Kudryavtsev, {\em Characteristic sequences of nonassociative algebras}, Comm. in Alg. {\bf 48}:4 (2020) 1713-1725

\bibitem{GutK21} A.E.~Guterman,  D.K.~Kudryavtsev, {\em Length function and characteristic sequences of quadratic algebras}, J. of Algebra {\bf 579} (2021) 428-455
	
\bibitem{GutK18} A.E.~Guterman,  D.K.~Kudryavtsev, {\em Upper bounds for the length of nonassociative algebras}, J. of Algebra {\bf 544} (2020) 483-497

\bibitem{GutLMSh} A.~Guterman, T.~Laffey, O.~Markova, H.~\v{S}migoc,  {\em A resolution of Paz's conjecture in the presence of a nonderogatory matrix}, Linear Algebra Appl.  {\bf 543} (2018)  234--250.
	
\bibitem{LafMSh} T.~Laffey, O.~Markova, H.~\v{S}migoc, {\em The effect of assuming the identity as a generator on the length of the matrix algebra}, Linear Algebra Appl. {\bf 498} (2016)  378--393.

\bibitem{Long1} W.~Longstaff, P.~Rosenthal, {\em On the lengths of irreducible pairs of complex
matrices}, Proc. Amer. Math. Soc., {\bf 139}:11 (2011), 3769--3777

\bibitem{Long2} W.~Longstaff, A.~Niemeyer, O.~Panaia, {\em On the lengths of pairs of complex matrices of size at most five}, Bull. Austral. Math. Soc., {\bf 73} (2006), 461--472

\bibitem{Mar09} O.~Markova,  {\em Length function and matrix algebras}, J. of Math. Sci. {\bf 193}:5 (2012), 687--768.

\bibitem{Pap97}
	C.~Pappacena, {\em An upper bound for the length of a
		finite-dimensional algebra}, J. of Algebra,  {\bf 197} (1997)  535--545.
	
	
\bibitem{Paz84} A.~Paz, {\em An application of the
		Cayley--Hamilton theorem to matrix polynomials in several
		variables}, Linear Mult. Algebra,  {\bf 15} (1984)
	161--170.

\bibitem{SpeR59} A.~Spencer, R.~Rivlin, {\em The theory of
		matrix polynomials and its applications to the mechanics of
		isotropic continua}, Arch. Ration. Mech. Anal.  {\bf 2} (1959)
	309--336.
	
\bibitem{SpeR60} A.~Spencer, R.~Rivlin, {\em Further results in the
		theory of matrix polynomials}, Arch. Ration. Mech. Anal.  {\bf
		4} (1960)  214--230.

\end{thebibliography}
\end{document}